\begin{document}

\author[T.Q. Bao, K. Fellner, S. Rosenberger] {Tang Quoc Bao, Klemens Fellner, Stefan Rosenberger}

\address{Tang Quoc Bao$^{1, 2}$ \hfill\break
$^{1}$ Institute of Mathematics and Scientific Computing, University of Graz, Heinrichstra{\ss}e 36, 8010 Graz, Austria\hfill\break
$^{2}$ Faculty of Applied Mathematics and Informatics, Hanoi University of Science and Technology, 1 Dai Co Viet, Hai Ba Trung, Hanoi, Vietnam}
\email{quoc.tang@uni-graz.at} 

\address{Klemens Fellner \hfill\break
Institute of Mathematics and Scientific Computing, NAWI Graz, University of Graz, Heinrichstra{\ss}e 36, 8010 Graz, Austria}
\email{klemens.fellner@uni-graz.at}

\address{Stefan Rosenberger \hfill\break
Institute of Mathematics and Scientific Computing, University of Graz, Heinrichstra{\ss}e 36, 8010 Graz, Austria}
\email{stefan.rosenberger@edu.uni-graz.at}

\subjclass[2010]{35K57, 35B40, 92C45}
\keywords{Reaction-Diffusion Equations; Global Existence; Surface Diffusion; Quasi-steady-state Approximation; Asymmetric Stem Cell Division; Finite Element Method.}

\title[QSSA and numerics for a volume-surface reaction-diffusion system]{Quasi-steady-state approximation\\ and numerical simulation for a \\volume-surface reaction-diffusion system}



\begin{abstract}
The asymmetric stem-cell division of Drosophila SOP precursor cells is driven by the asymmetric localisation of the key protein Lgl (Lethal giant larvae) during mitosis, when Lgl is phosphorylated by the kinase aPKC on a subpart of the cortex and subsequently released into the cytoplasm.

In this paper, we present a volume-surface reaction-diffusion system, which models the localisation of Lgl within the cell cytoplasm and on the cell cortex. We prove well-posedness of global solutions as well as regularity of the solutions. Moreover, we rigorously perform the fast reaction limit to a reduced quasi-steady-state approximation system, when phosphorylated Lgl is instantaneously expelled from the cortex. Finally, we apply a suitable first order finite element scheme to simulate and discuss interesting numerical examples, which illustrate {i) the influence of the presence/absence of surface-diffusion to the behaviour of the system and the complex balance steady state and ii) the dependency on the release rate of phosphorylated cortical Lgl.}
 
          \end{abstract}
          
\maketitle
\numberwithin{equation}{section}
\newtheorem{theorem}{Theorem}[section]
\newtheorem{lemma}[theorem]{Lemma}
\newtheorem{proposition}[theorem]{Proposition}
\newtheorem{definition}{Definition}[section]
\newtheorem{remark}{Remark}[section]
          

         \section{Introduction}
         
In stem cells undergoing asymmetric cell division, particular proteins (so-called cell-fate determinants) are localised at the cortex of only one of the two daughter cells during mitosis. These cell-fate determinants trigger in the following the differentiation of one daughter cell into specific tissue while the other daughter cell remains a stem cell. 
         
In Drosophila, SOP precursor stem cells provide a well-studied biological example model of asymmetric stem cell division, see e.g. \cite{BMK,MEBWK,WNK} and the references therein. 
In particular, asymmetric cell division of SOP cells is driven by the asymmetric localisation of the key protein Lgl (Lethal giant larvae), which exists in two conformational states: a non-phosphorylated form which regulates the localisation of the cell-fate-determinants in the membrane of one daughter cell, and a phosphorylated form which is inactive.

The asymmetric localisation of Lgl during mitosis is the result of the activation of the kinase aPKC, which phosphorylates Lgl (as part of a highly evolutionary conserved protein complex) only on a subpart of the cortex, as well as the weakly reversible reaction/sorption dynamics of the two conformations of Lgl between cortex and cytoplasm. In particular, it is the (fast) irreversible release of phosphorylated Lgl from the cortex, which initiates the asymmetric localisation of Lgl upon the activation of aPKC.

While the asymmetric localisation of Lgl is essential for the asymmetric cell division of SOP cells, the subsequent biological machinery, where the asymmetric localisation of Lgl leads to the asymmetric localisation allocation of the adaptor protein Pon (Partner of Numb) and the cell-fate determinate Numb is currently not well enough understood to be considered here.

\medskip
         
In this paper, we shall present and study a volume-surface reaction-diffusion model system describing the evolution of Lgl in its non-phosphorylated and phosphorylated conformations both in the cytoplasm (i.e. in the cell volume) and at the cortex (i.e. the surface/membrane of the cell). 

More precisely, we shall denote by $L(t,x)$ and $P(t,x)$ the cytoplasmic concentrations of non-phosphorylated and phosphorylated Lgl
          within the bounded cell domain $\Omega\subset \mathbb R^n$, while $l(t,x)$ and $p(t,x)$ 
         denote the cortical concentrations of the non-phosphorylated and phosphorylated Lgl at the boundary $\Gamma:= \partial \Omega$, which is assumed sufficiently smooth (e.g. $C^{2+\alpha}$ with $\alpha >0$).
         
         The reaction kinetics between the species $L$, $P$, $l$ and $p$ are depicted in Figure \ref{LGLmodel} and summarise the following processes: i) a reversible reaction between $L$ and $P$ with rates $\alpha$ and $\beta$ on the domain $\Omega$, ii) a reversible exchange between $L$ and $l$ at the boundary $\Gamma$ with rates $\lambda$ and $\gamma$, iii) an irreversible 
         phosphorylation of $l$-Lgl into $p$-Lgl at the boundary $\Gamma$ with rate $\sigma$ and iv) an irreversible release of 
         $p$-Lgl from the boundary $\Gamma$ into the domain $\Omega$ with rate $\xi$. 
         
 \medskip
         
We emphasise that these reaction/sorption processes jointly conserve the total mass of Lgl (see the conservation law \eqref{cons} below). Moreover, the dynamics of Fig. \ref{LGLmodel} forms a so-called weakly-reversible or complex balance reaction network, for which the convergence towards a steady state and well as the structure of the steady states are significantly more subtle than for detailed balance models, see the discussion of the numerical examples in Section \ref{sec:3} or also \cite{FPT, Hor72} and the references therein.

         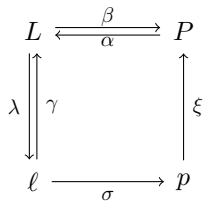
\begin{figure}[htp]
         \begin{center}\scalebox{1}[1]{
         \begin{tikzpicture}
         \node (a) {$L$} node (b) at (2,0) {$P$} node (c) at (0,-2) {$\ell$} node (d) at (2,-2)  {$p$};
         \draw[arrows=->] ([xshift =0.5mm,yshift=0.5mm]a.east) -- node [below] {\scalebox{.8}[.8]{$\alpha$}} ([xshift =-0.5mm,yshift=0.5mm]b.west);
         \draw[arrows=->] ([xshift =-0.5mm,yshift=-0.5mm]b.west) -- node [above] {\scalebox{.8}[.8]{$\beta$}} ([xshift =0.5mm,yshift=-0.5mm]a.east);
         \draw[arrows=->] ([xshift =0.5mm,yshift=0.5mm]c.north) -- node [right] {\scalebox{.8}[.8]{$\gamma$}} ([xshift =0.5mm,yshift=-0.5mm]a.south);
         \draw[arrows=->] ([xshift =-0.5mm,yshift=-0.5mm]a.south) -- node [left] {\scalebox{.8}[.8]{$\lambda$}} ([xshift =-0.5mm,yshift=0.5mm]c.north);
         \draw[arrows=->] ([xshift =0.5mm]c.east) -- node [below] {\scalebox{.8}[.8]{$\sigma$}} ([xshift=-0.5mm]d.west);
         \draw[arrows=->] ([yshift =0.5mm]d.north) -- node [right] {\scalebox{.8}[.8]{$\xi$}} ([yshift=-0.5mm]b.south);
         \end{tikzpicture}} \caption{The weakly-reversible reaction dynamics of $L, P, l$ and $p$.}\label{LGLmodel}
         \end{center}
         \end{figure}
         
In the following, we propose  a continuum model of partial differential equations, which describe the reactions and the diffusion processes of these species both on the domain $\Omega$ and on its surface $\Gamma$. 
The choice of a continuum model is based on the biological observation that protein concentrations in SOP cells are rather large and that 
stochastic effects in the concentrations can thus be neglected, see \cite{BMK}.  

The aim of the model is to qualitatively study, mathematically analyse and 
numerically simulate  
the reaction-diffusion dynamics between phosphorylated and un-phosphorylated Lgl 
in the cytoplasm and on the cortex with particular emphasis on the effect of surface diffusion, see Sections \ref{sec:3} and \ref{sec:conc}.

In SOP stem cells, the phosphorylation of Lgl occurs at the boundary $\Gamma$ by means of an atypical protein kinase aPKC, which is pre-located at a sub-part $\Gamma_2\subset\Gamma$. We shall thus 
         assume that $\Gamma$ is the union of two disjoint subsets $\Gamma = \Gamma_1\cup \Gamma_2$, in which $\Gamma_2$ is connected has a smooth boundary 
         $\partial\Gamma_2$. In case that $\Gamma_1 = \emptyset$, then $\Gamma_2 \equiv \Gamma$ is a surface in $\mathbb R^n$ without boundary. 
          
We consider the following equations for
         the volume concentrations $L$ and $P$: 
         \begin{equation}
         \begin{cases}
         L_t - d_L\Delta L = \alpha P - \beta L, \quad &\qquad x\in\Omega, \quad t>0,\\
         P_t - d_P\Delta P = -\alpha P + \beta L, \quad &\qquad x\in\Omega, \quad t>0,\\
         L(0, x) = L_0(x), \quad P(0,x) = P_0(x), &\qquad x\in\Omega,
         \end{cases}
         \label{f1}
         \end{equation}
         where $\Delta$ denotes the Laplacian on the domain $\Omega$, $d_L$, $d_P$ are positive volume-diffusion coefficients, $\alpha$, $\beta$ are positive and constant reaction rates,  and $L_0(x)$ and
         $P_0(x)$ are given initial concentrations.  
         
         The volume concentrations $L$ and $P$ are connected to the surface concentrations $l$ and $p$ in terms of Robin- and Neumann boundary conditions
         \begin{equation}
         \begin{cases}
         d_L\frac{\partial L}{\partial \nu}  = - \lambda L+\gamma l, \qquad &x\in\Gamma, \quad t>0,\\
         d_P\frac{\partial P}{\partial \nu} = \chi_{\Gamma_2}\xi p, \qquad &x\in\Gamma, \quad t>0,
         \end{cases}
         \label{f2}
         \end{equation}
         where $\nu(x)$ denotes the unit normal outward vector at $x\in \Gamma$, 
         $\gamma$, $\lambda$, and $\xi$ are positive and 
         constant reaction rates and $\chi_{\Gamma_2}$ denotes the characteristic function localising the aPKC-active part of the boundary $\Gamma_2$, 
         i.e. $\chi_{\Gamma_2}(x) = 1$ if $x\in \Gamma_2$ and $\chi_{\Gamma_2}(x) = 0$ otherwise. 
We remark that we consider smoothed versions of $\chi_{\Gamma_2}(x)$ biologically equally justified, yet mathematically less general and challenging.

         Thirdly, the surface concentrations $l$ and $p$ satisfy
         \begin{equation}
         \begin{cases}
         l_t - d_l\Delta_{\Gamma}l = \lambda L- (\gamma + \sigma \chi_{\Gamma_2})l, &\qquad x\in\Gamma,\quad t>0,\\
         p_t - d_p\Delta_{\Gamma_2}p = \sigma l- \xi p , &\qquad x\in\Gamma_2,\quad t>0,\\
         d_p\frac{\partial p}{\partial \nu_{\Gamma_2}} = 0, &\qquad x\in\partial\Gamma_2,\\
         l(0,x) = l_0(x), &\qquad x\in\Gamma,\\
         p(0,x) = p_0(x), &\qquad x\in\Gamma_2,
         \end{cases}
         \label{f3}
         \end{equation}
         where $\Delta_{\Gamma}$ and $\Delta_{\Gamma_2}$ are Laplace-Beltrami operators (see e.g. \cite{GT}) acting on the surfaces $\Gamma$ and $\Gamma_2$, respectively, $d_l$, $d_p$ are non-negative surface-diffusion coefficients and $\sigma>0$ is the positive and constant phosphorylation rate.  
         
         The considered evolution process conserves the total mass of Lgl, which is expressed in the following conservation law:
         \begin{equation}
         \int_{\Omega} (L(t,x)+P(t,x)) \, dx + \int_{\Gamma} l(t,x)\,dS + \int_{\Gamma_2} p(t,x)\,dS = M_0>0,\qquad \forall t>0 \label{cons}
         \end{equation}
         where $M_0$ is the initial mass, which is assumed to be positive,
         \begin{equation*}
         M_0 := \int_{\Omega} (L_0(x)+P_0(x)) \, dx + \int_{\Gamma} l_0(x)\,dS + \int_{\Gamma_2} p_0(x)\,dS>0.
         \end{equation*}
         \medskip
         
         Linear VSRDs like \eqref{f1}--\eqref{f3} appear recently in many related cell-biological models, such as signalling models, see e.g. \cite{FNR}, models for transcription and translation of genes, see e.g. \cite{TVO}, 
         models of the reversible transitions between various conformational states of proteins, see e.g. \cite{Mayor03}, models of the transitions between various folding states of RNA, see e.g. \cite{Bok03}, and models of "field-road" coupling in ecology, see e.g. \cite{B1, B2, B3}.
         \medskip
         
The {\bf content of this paper} is the following: In Section \ref{sec:2}, we first study the well-posedness of the volume-surface reaction-diffusion (VSRD for short) system \eqref{f1}--\eqref{f3} and the regularity of solutions. The well-posedness will be shown by reformulating the system in a variational form where the bilinear form is not coercive but satisfies instead a G{\aa}rdingÕs inequality. Global existence of solutions follows then from the conservation of mass and a suitable $L^2$ functional.          
         { Previous related results of VSRD systems can be found, for instance, in \cite{ER, NGCRSS} for linear models and \cite{B1,B2,B3, BFL} for nonlinear models.}
\medskip
         
The next major part of this paper is the quasi-steady-state approximation (QSSA for short) for \eqref{f1}--\eqref{f3} which will be done in Section 3. The biological data for the model system \eqref{f1}--\eqref{f3} suggests that the release rate $\xi$ from cortical $p$-Lgl to cytoplasmic $P$-Lgl is much larger than the others reaction rates. We shall thus prove rigorously the QSSA for  \eqref{f1}--\eqref{f3} towards a reduced QSSA system (see Section \ref{sec:4}), which occurs when considering the limit $\xi\to\infty$ for the release rate of cortical phosphorylated $p$-Lgl. The QSSA thus leads to a reduced system, where the phosphorylation of $l$-Lgl at the cortex yields directly the inflow for $P$-Lgl and the cortical concentration $p$-Lgl no longer needs to be considered.

QSSAs of reactive systems occur commonly in chemical engineering and although applying QSSAs has been routinely done by chemical engineers since a long time, the mathematical theory is usually missing. 
 Recent however, a lot of mathematical attention has been paid to rigorously prove QSSAs (see e.g. \cite{BCD, Bothe1, BH, BP, BP1, CD} and references therein).
         
In the present paper, the novelty lies in the surface-volume coupling of the QSSA since the limiting parameter $\xi\to\infty$ appears as inhomogeneity in the Neumann boundary condition for $P$-Lgl, which introduces new technical difficulties and requires appropriate \emph{a priori} estimates in order to deal with the reactions connecting volume and surface.

Here, we are able present a first result concerning such a QSSA of a VSRD system in the case that no surface diffusion terms are present. The proof is based on a duality argument, which was already successfully applied to a nonlinear reaction-diffusion system in \cite{BP}, yet without surface-volume coupling. We remark that the QSSA of the weakly-reversible model system \eqref{f1}--\eqref{f3} with surface diffusion poses technique difficulties, which we were unable to overcome so far. The QSSA of a simpler, detailed balance VSRD system was recently studied in \cite{HT}.         
         
\medskip

In Section \ref{sec:3}, we present a suitable finite element method (FEM) discretisation of the model system \eqref{f1}--\eqref{f3} and discuss some numerical test cases, which illustrate the complexity of this four species system. The chosen examples present particular interesting aspects of the interplay between volume and surface diffusion and the reactions connecting cytoplasm and cortex.                

{ The numerical simulations demonstrate the remarkable influence of surface diffusion and the complex balance reaction kinetics to the behaviour of the system. On one hand, surface diffusion clearly helps to smooth jumps in of concentrations on the boundary. More interestingly, the presence/absence of surface diffusion strongly affects the shape of the attained complex balance equilibrium (see e.g. Fig \ref{figLP} and it's discussion for details). The numerical simulation also illustrates the asymptotic behaviour of the system as $\xi \rightarrow +\infty$, which confirms the theoretical QSSA done in Section \ref{sec:4}.

For relating results on the role of surface diffusion, we refer the interested reader to recent works of Berestycki {\it et al.} \cite{B1,B2,B3}, where the authors studied the influence of surface diffusion to the travelling wave speed in the domain for Fisher-KPP models.
}

Finally, in Section \ref{sec:conc}, we summarise the conclusions of the paper, while the Appendix \ref{sec:app} provides additional details.

         \section{Well-posedness of the system \eqref{f1}--\eqref{f3}}\label{sec:2}
    
         In this section, we first prove the existence of a unique weak solution to \eqref{f1}--\eqref{f3} by using abstract results for linear PDE systems. Since the bilinear form is not coercive but only satisfies a G\r{a}rding's inequality, the uniform bounds in time of solutions do not follow immediately. We will show that if the initial data is nonnegative, and consequently the solution is nonnegative, then the solution to \eqref{f1}--\eqref{f2} is bounded uniformly in time. Some regularities of the solution are studied at the end of the section.
         
         \noindent\textbf{Notations:}  
         Throughout the paper, we will denote by $(\cdot, \cdot)_{\Omega}$ and $\|\cdot\|_{\Omega}$ the inner product and its induced norm in $L^2(\Omega)$. Analogously, we will denote the inner products and norms in $L^2(\Gamma), L^2(\Gamma_1)$ and $L^2(\Gamma_2)$ (e.g. the norm in $L^2(\Gamma)$ is denoted by $\|\cdot\|_{\Gamma}$). The tangential derivatives on $\Gamma$ and $\Gamma_2$ are denoted by $\nabla_{\Gamma}$ and $\nabla_{\Gamma_2}$, respectively (see e.g. \cite[Chapter 16]{GT}.
         
We will denote by $C$ a generic constant, which only depends on the initial data, all diffusion and reaction rates. 
Moreover, $C_T$ denotes a generic constant, which additionally depends on the time interval size $T>0$. 
 
For any given $T>0$ and $q\geq 1$, we shall denote 
         $$
         \Omega_T:= [0,T]\times \Omega, \qquad \Gamma_T:=[0,T]\times \Gamma, \qquad 
         \Gamma_{2,T}:=[0,T]\times \Gamma_2.
         $$ 
         The spaces $L^q(\Omega_T)$, $L^q(\Gamma_T)$ or $L^q(\Gamma_{2,T})$ will be used with the usual norms, for example
         $$
         \|f\|_{L^2(\Omega_T)} = \biggl(\int_{0}^T\|f(t)\|_{L^2(\Omega)}^2dt\biggr)^{1/2}.
         $$
         
         
         \begin{definition}[Weak Solutions]\label{def:WeakSolution}\hfill\\
         	A quadruple $(L, P, l, p)$ is called a weak solution to system \eqref{f1}--\eqref{f3} on $(0, T)$ if
         	\begin{align*}
         	L, P & \in C([0,T]; L^2(\Omega))\cap L^2(0,T; H^1(\Omega)),\\
         	l & \in C([0,T]; L^2(\Gamma))\cap L^2(0,T; H^1(\Gamma)),\\ 
         	p & \in C([0,T]; L^2(\Gamma_2))\cap L^2(0,T; H^1(\Gamma_2)),
         	\end{align*}
         	and
         	\begin{equation*}
	         	L(x,0) = L_0(x), \qquad P(x, 0) = P_0(x), \qquad \ell(x,0) = \ell_0(x), \qquad p(x,0) = p_0(x),
         	\end{equation*}
         	and for all test functions $\varphi_1,\varphi_2\in H^1(\Omega)$, $\psi\in H^1(\Gamma)$ and $\psi_2\in H^1(\Gamma_2)$
         	we have
         	\begin{equation}
	         	\begin{aligned}
		         	\frac{d}{dt}\bigl[(L, \varphi_1)_{\Omega} + (P, \varphi_2)_{\Omega} + (\ell, \psi_1)_{\Gamma} + (p,\psi_2)_{\Gamma_2}\bigr]+ a(L, P, \ell, p; \varphi_1,\varphi_2, \psi_1, \psi_2) = 0
	         	\end{aligned}
         	\end{equation}
         	for a.e. $0<t<T$, where the bilinear form $a$ is defined as
         	\begin{equation}\label{bilinear}
	         	\begin{aligned}
		         	&a(L, P, \ell, p; \varphi_1, \varphi_2, \psi_1, \psi_2)\\
		         	&\qquad= d_L(\nabla L, \nabla \varphi_1)_{\Omega} + d_P(\nabla P, \nabla\varphi_2)_{\Omega} + d_{\ell}(\nabla_{\Gamma}\ell, \nabla_{\Gamma}\psi_1)_{\Gamma} + d_p(\nabla_{\Gamma_2}p, \nabla_{\Gamma_2}\psi_2)_{\Gamma_2}\\
		         	&\qquad\qquad + (\beta L - \alpha P, \varphi_1 - \varphi_2)_{\Omega} + (\lambda L - \gamma \ell, \varphi_1 - \psi_1)_{\Gamma}\\
		         	&\qquad\qquad\qquad+ (\sigma\ell, \psi_1 - \psi_2)_{\Gamma_2} + (\xi p, \psi_2 - \varphi_2)_{\Gamma_2}.
	         	\end{aligned}
         	\end{equation}
         \end{definition}
         
         \begin{lemma}[G{\aa}rding Inequality]\label{lem:Garding}\hfill\\
         	The bilinear form $a$ defined in \ref{bilinear} is continuous in $H^1(\Omega)\times H^1(\Omega)\times H^1(\Gamma)\times H^1(\Gamma_2)$ and satisfies a G{\aa}rding inequality, i.e. there exists $\delta_1>0$ and $\delta_2>0$ such that
         	\begin{multline*}
         	a(L, P, \ell, p; L, P, \ell, p) + \delta_1(\|L\|_{\Omega}^2 + \|P\|_{\Omega}^2 + \|\ell\|_{\Gamma}^2 + \|p\|_{\Gamma_2}^2)\\
         	\geq \delta_2(\|L\|_{H^1(\Omega)}^2 + \|P\|_{H^1(\Omega)}^2 + \|\ell\|_{H^1(\Gamma)}^2 + \|p\|_{H^1(\Gamma_2)}^2)
   \end{multline*}
for all $(L,P,\ell,p)\in H^1(\Omega)\times H^1(\Omega)\times H^1(\Gamma)\times H^1(\Gamma_2)$.
         \end{lemma}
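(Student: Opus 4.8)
The plan is to verify continuity by direct estimation and then to establish the Gårding inequality by testing the bilinear form against its own arguments, using the trace theorem to tame the terms that couple a volume trace to a surface concentration. Throughout I assume the surface-diffusion coefficients $d_\ell, d_p > 0$, as is needed for the stated control of the full $H^1(\Gamma)$- and $H^1(\Gamma_2)$-norms on the right-hand side.

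For continuity, I would bound each summand of \eqref{bilinear} by Cauchy--Schwarz. The four gradient terms are immediately controlled by the corresponding $H^1$-seminorms. In the reaction terms the volume quantities $L, P$ enter only through their traces on $\Gamma$ and $\Gamma_2$, so I invoke the trace inequality $\|u\|_\Gamma \le C\|u\|_{H^1(\Omega)}$ in order to dominate, for instance, $|(\lambda L - \gamma\ell, \varphi_1 - \psi_1)_\Gamma|$ and $|(\xi p, \psi_2 - \varphi_2)_{\Gamma_2}|$ by products of full $H^1$-norms of their arguments. Summing up yields continuity with a constant depending on the reaction rates, the diffusion coefficients and the trace constant.

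For the Gårding inequality I set $(\varphi_1, \varphi_2, \psi_1, \psi_2) = (L, P, \ell, p)$. The diffusion part then equals
\[
d_L\|\nabla L\|_\Omega^2 + d_P\|\nabla P\|_\Omega^2 + d_\ell\|\nabla_\Gamma\ell\|_\Gamma^2 + d_p\|\nabla_{\Gamma_2}p\|_{\Gamma_2}^2,
\]
which controls the four $H^1$-seminorms. Expanding the four reaction terms produces the manifestly nonnegative squares $\beta\|L\|_\Omega^2$, $\alpha\|P\|_\Omega^2$, $\lambda\|L\|_\Gamma^2$, $\gamma\|\ell\|_\Gamma^2$, $\sigma\|\ell\|_{\Gamma_2}^2$ and $\xi\|p\|_{\Gamma_2}^2$, together with the sign-indefinite cross terms $-(\alpha+\beta)(L,P)_\Omega$, $-(\lambda+\gamma)(L,\ell)_\Gamma$, $-\sigma(\ell,p)_{\Gamma_2}$ and $-\xi(p,P)_{\Gamma_2}$.

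The main obstacle, and the feature that distinguishes this computation from a purely volumetric one, is the control of the two cross terms $(L,\ell)_\Gamma$ and $(p,P)_{\Gamma_2}$ that mix a volume trace with a surface quantity. I would split each product by Young's inequality and then absorb the arising trace norms $\|L\|_\Gamma^2$ and $\|P\|_{\Gamma_2}^2$ by means of the sharp trace-interpolation estimate $\|u\|_\Gamma^2 \le \varepsilon\|\nabla u\|_\Omega^2 + C_\varepsilon\|u\|_\Omega^2$. Choosing $\varepsilon$ small lets the gradient contributions produced by the trace inequality be absorbed into the diffusion terms, leaving a strictly positive fraction of each $H^1$-seminorm; the purely interior term $-(\alpha+\beta)(L,P)_\Omega$ and the purely surface term $-\sigma(\ell,p)_{\Gamma_2}$ are instead handled by ordinary Young's inequality against $L^2$-norms. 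All the leftover $L^2$-terms, with the possibly large constants coming from $C_\varepsilon$, are then dominated by choosing $\delta_1$ large in the term $\delta_1(\|L\|_\Omega^2 + \|P\|_\Omega^2 + \|\ell\|_\Gamma^2 + \|p\|_{\Gamma_2}^2)$ added on the left-hand side; this same term simultaneously supplies the $L^2$-parts needed to upgrade the surviving seminorms to full $H^1$-norms on the right with a suitable $\delta_2 > 0$.
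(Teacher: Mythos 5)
Your proof is correct and follows essentially the same route as the paper: testing the form against its own arguments and absorbing the volume--surface cross terms $(L,\ell)_{\Gamma}$ and $(P,p)_{\Gamma_2}$ via the trace-interpolation inequality $\|u\|_{\Gamma}^2 \le \varepsilon\|\nabla u\|_{\Omega}^2 + C_{\varepsilon}\|u\|_{\Omega}^2$ with $\varepsilon$ chosen small enough to be swallowed by the volume diffusion terms, the remaining cross terms being handled by Young's inequality and the leftover $L^2$-terms by the $\delta_1$-term. Your explicit observation that $d_{\ell}, d_p > 0$ is needed for the full $H^1(\Gamma)$- and $H^1(\Gamma_2)$-norms to appear on the right-hand side is a point the paper leaves implicit (it only assumes these coefficients non-negative), and is a worthwhile clarification.
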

         \begin{proof}
         	The continuity of $a$ is standard so we omit it here. To prove the G\r{a}rding inequality we use \cite[Theorem 1.5.1.10]{Grisvard} that for any $\varepsilon>0$, there exists $C_{\varepsilon}>0$ such that
         	\begin{equation*}
         	\|f\|_{\Gamma}^2 \leq \varepsilon\|\nabla f\|_{\Omega}^2 + C_{\varepsilon}\|f\|_{\Omega}^2 \qquad \text{ for all } \quad f\in H^1(\Omega).
         	\end{equation*}
         	Hence, we can estimate
         	\begin{equation*}
         	\begin{aligned}
         	a(L,P,\ell,p; L, P,\ell, p)&\geq d_L\|\nabla L\|_{\Omega}^2 + d_P\|\nabla P\|_{\Omega}^2 + d_{\ell}\|\nabla_{\Gamma}\ell\|_{\Gamma}^2 + d_p\|\nabla_{\Gamma_2}p\|_{\Gamma_2}^2\\
         	&\qquad - (\alpha+\beta)(L, P)_{\Omega} - (\lambda + \gamma)(L,\ell)_{\Gamma} - \sigma(\ell,p)_{\Gamma_2} - \xi(P,p)_{\Gamma_2}\\
         	&\geq \frac{1}{2}\left(d_L\|\nabla L\|_{\Omega}^2 + d_P\|\nabla P\|_{\Omega}^2 + d_{\ell}\|\nabla_{\Gamma}\ell\|_{\Gamma}^2 + d_p\|\nabla_{\Gamma_2}p\|_{\Gamma_2}^2\right)\\
         	&\qquad - C(\|L\|_{\Omega}^2 + \|P\|_{\Omega}^2 + \|\ell\|_{\Gamma}^2 + \|p\|_{\Gamma_2}^2).
         	\end{aligned}
         	\end{equation*}
         \end{proof}
         
         Basing on Lemma \ref{lem:Garding}, the existence of unique weak solution to \eqref{f1}--\eqref{f3} follows from standard theory of linear parabolic equations, see e.g. \cite[XVIII \S3)]{DL92}.
         
         \medskip

		\begin{theorem}[Existence of a Unique Weak Global Solution]\label{thm:existence} \hfill\\
			For any $(L_0,P_0,\ell_0,p_0)\in L^2(\Omega)\times L^2(\Omega)\times L^2(\Gamma)\times L^2(\Gamma_2)$, the system \eqref{f1}--\eqref{f3} has a unique weak solution on $(0,T)$ for all $T>0$ which conserves the total mass, that is
	         \begin{multline*}
		         \int_{\Omega}L(x,t)dx + \int_{\Omega}P(x,t)dx + \int_{\Gamma}\ell(x,t)dS + \int_{\Gamma_2}p(x,t)dS\\
		         = \int_{\Omega}L_0(x)dx + \int_{\Omega}P_0(x)dx + \int_{\Gamma}\ell_0(x)dS + \int_{\Gamma_2}p_0(x)dS.
	         \end{multline*}
			Moreover, if the initial data is nonnegative then the solution is nonnegative for all time $t>0$.
         \end{theorem}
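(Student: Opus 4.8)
The plan is to establish the three assertions — existence/uniqueness, mass conservation, and nonnegativity — separately, using Lemma \ref{lem:Garding} for the first.

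For existence and uniqueness I would cast \eqref{f1}--\eqref{f3} in the Gelfand-triple framework $V \hookrightarrow H \hookrightarrow V'$ with $H = L^2(\Omega)\times L^2(\Omega)\times L^2(\Gamma)\times L^2(\Gamma_2)$ and $V = H^1(\Omega)\times H^1(\Omega)\times H^1(\Gamma)\times H^1(\Gamma_2)$, the embeddings being continuous and dense. Since $a$ is continuous on $V$ and satisfies the Gårding inequality of Lemma \ref{lem:Garding}, the classical parabolic theory of J.-L. Lions (as in \cite{DL92}) applies directly. Equivalently, the substitution $(\widetilde L,\widetilde P,\widetilde \ell,\widetilde p) = e^{-\delta_1 t}(L,P,\ell,p)$ turns $a$ into the shifted form $a + \delta_1(\cdot,\cdot)_H$, which is coercive on $V$ by Lemma \ref{lem:Garding}, so that the standard Galerkin construction yields, for each $T>0$, a unique weak solution in the regularity class of Definition \ref{def:WeakSolution} with time derivative in $L^2(0,T;V')$. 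As $T$ is arbitrary, the solution is global.

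For mass conservation I would insert the constant test functions $\varphi_1=\varphi_2=\psi_1=\psi_2=1$, which lie in the respective $H^1$ spaces. All four diffusion terms in \eqref{bilinear} vanish since gradients of constants are zero, and each reaction pairing is a difference evaluated against $1-1=0$, so $a(L,P,\ell,p;1,1,1,1)=0$. Hence $\int_\Omega L + \int_\Omega P + \int_\Gamma \ell + \int_{\Gamma_2} p$ has vanishing time derivative, and integrating in time gives the stated conservation law.

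The main work, and the main obstacle, is nonnegativity. Here I would test the dual form $\sum_i\langle u_i',v_i\rangle + a(\,\cdot\,;v)=0$ with the negative parts $\varphi_1=-L^-$, $\varphi_2=-P^-$, $\psi_1=-\ell^-$, $\psi_2=-p^-$, where $u^-:=\max\{-u,0\}$; these lie in $V$ since truncation preserves $H^1$, and the chain-rule identity $\langle u',-u^-\rangle = \tfrac12\tfrac{d}{dt}\|u^-\|^2$ (justified on Galerkin approximants) converts the time-derivative term into $\tfrac12\tfrac{d}{dt}\bigl(\|L^-\|_\Omega^2+\|P^-\|_\Omega^2+\|\ell^-\|_\Gamma^2+\|p^-\|_{\Gamma_2}^2\bigr)$. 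The diffusion terms become the nonnegative quantities $d_L\|\nabla L^-\|_\Omega^2$, etc. In the reaction part, using $(u,-u^-)=\|u^-\|^2$ and splitting each coupling via $u=u^+-u^-$, the diagonal contributions and all positive-part-against-negative-part cross products (e.g. $\beta(L^+,P^-)_\Omega$, $\gamma(\ell^+,L^-)_\Gamma$, $\sigma(\ell^+,p^-)_{\Gamma_2}$, $\xi(p^+,P^-)_{\Gamma_2}$) are nonnegative — this is precisely the quasi-positivity of the kinetics — while the remaining products of two negative parts, namely $(L^-,P^-)_\Omega$, $(L^-,\ell^-)_\Gamma$, $(\ell^-,p^-)_{\Gamma_2}$ and $(p^-,P^-)_{\Gamma_2}$, carry the wrong sign and must be absorbed. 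The volume product is handled by Young's inequality, while the boundary products — involving traces such as $L^-|_\Gamma$ and $P^-|_{\Gamma_2}$ not controlled by $L^2(\Omega)$-norms — are exactly where the trace inequality $\|f\|_\Gamma^2\le\varepsilon\|\nabla f\|_\Omega^2+C_\varepsilon\|f\|_\Omega^2$ from the proof of Lemma \ref{lem:Garding} is needed, with $\varepsilon$ small so the $\varepsilon\|\nabla\,\cdot^-\|_\Omega^2$ is absorbed into the diffusion terms. Collecting terms I would reach $\tfrac{d}{dt}\bigl(\|L^-\|_\Omega^2+\|P^-\|_\Omega^2+\|\ell^-\|_\Gamma^2+\|p^-\|_{\Gamma_2}^2\bigr)\le C\bigl(\|L^-\|_\Omega^2+\|P^-\|_\Omega^2+\|\ell^-\|_\Gamma^2+\|p^-\|_{\Gamma_2}^2\bigr)$, so that Gronwall's lemma together with the vanishing of all negative parts at $t=0$ forces them to vanish for all $t>0$, i.e. the solution is nonnegative. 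The delicate point throughout is the volume--surface coupling: the boundary cross-terms cannot be closed at the $L^2$-level alone and genuinely require trading boundary norms against the diffusion dissipation.
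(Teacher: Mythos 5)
Your proposal is correct and, for the part the paper actually proves, follows the same route: the paper establishes Theorem \ref{thm:existence} by invoking the G{\aa}rding inequality of Lemma \ref{lem:Garding} together with the standard linear parabolic theory of Dautray--Lions \cite{DL92}, which is exactly your Gelfand-triple/exponential-shift argument. The paper gives no explicit proof of the mass conservation or the nonnegativity claims; your constant-test-function computation (noting that $a$ vanishes on $(1,1,1,1)$) and your Stampacchia truncation argument are correct and fill in precisely what the paper leaves to the reader, including the genuinely necessary use of the trace interpolation inequality $\|f\|_{\Gamma}^2\le\varepsilon\|\nabla f\|_{\Omega}^2+C_{\varepsilon}\|f\|_{\Omega}^2$ to absorb the cross-term $\xi(p^-,P^-)_{\Gamma_2}$ into the diffusion dissipation, since the $P$-equation carries a pure Neumann condition with no Robin damping term to control $\|P^-\|_{\Gamma_2}$.
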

         
         \medskip
         
         The uniform global bounds of the solution in $L^2$ for the system \eqref{f1}--\eqref{f3} does not follows immediately from Theorem \ref{thm:existence} since the bilinear $a$ is not coercive. In the sequel, we will show that such $L^2$-bounds can be obtained with the help of the mass conservation in Theorem \ref{thm:existence}. 

         \medskip
         
         \begin{theorem}[{Global $L^2$-bounds of Weak Solutions}]\label{theo:GlobalExistence}\hfill\\
         Assume that the initial data $(L_0,P_0,l_0,p_0)\ge0$ are non-negative. 
         Then, there exists a constant $C$, which depends only on the domain, the initial data, the reaction rates and the diffusion rates such that
         \begin{equation*}
         \forall t\ge 0:\qquad \|L(t)\|_{\Omega}^2 + \|P(t)\|_{\Omega}^2 + \|l(t)\|_{\Gamma}^2 + \|p(t)\|_{\Gamma_2}^2 \leq C,
         \end{equation*}
         i.e. the global solutions to system \eqref{f1}--\eqref{f3} are  bounded uniformly-in-time. 
         \end{theorem}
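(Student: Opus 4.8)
The plan is to combine the mass conservation from Theorem~\ref{thm:existence} with a weighted $L^2$ energy estimate. First I would record the consequence of mass conservation: since $L,P,\ell,p\ge0$ and the conserved total equals $M_0$, each species obeys the uniform $L^1$ bound $\|L(t)\|_{L^1(\Omega)},\|P(t)\|_{L^1(\Omega)}\le M_0$ and $\|\ell(t)\|_{L^1(\Gamma)},\|p(t)\|_{L^1(\Gamma_2)}\le M_0$. Next I would test the weak formulation with the solution itself to obtain, with $E(t):=\|L\|_{\Omega}^2+\|P\|_{\Omega}^2+\|\ell\|_{\Gamma}^2+\|p\|_{\Gamma_2}^2$, the identity $\tfrac12 E'(t)+a(L,P,\ell,p;L,P,\ell,p)=0$. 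By itself the G{\aa}rding inequality of Lemma~\ref{lem:Garding} only yields $E'\le 2\delta_1 E$, i.e. exponential growth, so the whole point is to use the $L^1$ bounds to turn this into a uniform-in-time estimate.

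The obstruction is that the reaction part of $a(\cdot;\cdot)$ is an \emph{indefinite} quadratic form, reflecting that the network is only weakly reversible (complex balanced) and not detailed balanced: the bulk term $(\beta L-\alpha P,L-P)_{\Omega}=\beta\bigl(L-\tfrac{\alpha}{\beta}P,L-P\bigr)_{\Omega}$ changes sign, and the release term $(\xi p,p-P)_{\Gamma_2}$ carries the cross term $-\xi(p,P)_{\Gamma_2}$ with no compensating $\|P\|_{\Gamma_2}^2$. To render the reversible reactions dissipative I would instead test with a \emph{weighted} solution $(a_1L,a_2P,a_3\ell,a_4p)$ and fix the weights by the reciprocal-equilibrium relations $a_1\alpha=a_2\beta$ and $a_3\lambda=a_1\gamma$; then the $L\rightleftharpoons P$ and $L\rightleftharpoons\ell$ contributions become perfect squares and are $\ge0$. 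The irreversible part $\ell\xrightarrow{\sigma}p\xrightarrow{\xi}P$ I would control by Young's inequality, absorbing $(\ell,p)_{\Gamma_2}$ into the positive terms $\|\ell\|_{\Gamma_2}^2,\|p\|_{\Gamma_2}^2$ (which forces a constraint such as $a_4\le a_3\xi/\sigma$), and estimating the boundary trace through Grisvard's inequality $\|P\|_{\Gamma_2}^2\le\varepsilon\|\nabla P\|_{\Omega}^2+C_\varepsilon\|P\|_{\Omega}^2$, with $\varepsilon\|\nabla P\|_{\Omega}^2$ absorbed into the weighted diffusion $a_2 d_P\|\nabla P\|_{\Omega}^2$.

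After these steps one is left, writing $\mathcal E_w:=a_1\|L\|_{\Omega}^2+a_2\|P\|_{\Omega}^2+a_3\|\ell\|_{\Gamma}^2+a_4\|p\|_{\Gamma_2}^2$, with a differential inequality of the form
\begin{equation*}
\tfrac12\mathcal E_w'(t)+\kappa\bigl(\|\nabla L\|_{\Omega}^2+\|\nabla P\|_{\Omega}^2+d_{\ell}\|\nabla_{\Gamma}\ell\|_{\Gamma}^2+d_p\|\nabla_{\Gamma_2}p\|_{\Gamma_2}^2\bigr)\le C\,\|P\|_{\Omega}^2,
\end{equation*}
and the decisive move is to absorb the right-hand side using the $L^1$ bound: splitting $P=\bar P+\tilde P$ into its mean and oscillation, $\|\bar P\|_{\Omega}^2$ is bounded by $M_0^2/|\Omega|$, while $\|\tilde P\|_{\Omega}^2\le C_P\|\nabla P\|_{\Omega}^2$ by Poincar\'e, which would yield $\tfrac12\mathcal E_w'+\kappa'\mathcal E_w\le C(M_0)$ and hence $\mathcal E_w(t)\le\max\{\mathcal E_w(0),C(M_0)/\kappa'\}$, uniformly in $t$. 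I expect the \textbf{main obstacle} to be precisely this absorption: the release $p\to P$ injects $L^2$-energy into $P$ through the boundary, which can be dissipated only by the bulk diffusion of $P$, so the Poincar\'e/trace constants must be beaten by $a_2 d_P$, and this couples the argument to the size of $d_P$ relative to $\xi,\sigma$ and the geometry. A route that avoids any such balance is to first establish a uniform time-averaged bound $\sup_{t\ge0}\int_t^{t+1}E(s)\,ds\le C$ --- for instance by a Pierre-type duality argument exploiting that $L+P$ solves a heat-type equation in $\Omega$ (the bulk reactions cancel) with boundary flux controlled by the surface $L^1$ data --- and then to upgrade it to the pointwise bound through the uniform Gronwall lemma applied to $E'\le2\delta_1 E$.
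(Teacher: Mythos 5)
Your high-level strategy --- a weighted $L^2$ functional, the $L^1$ bounds from mass conservation, and Gronwall --- is exactly the paper's, but two of your pivotal steps fail as written. The decisive flaw is the completion of squares: choosing $a_1\alpha=a_2\beta$ and $a_3\lambda=a_1\gamma$ turns the reversible contributions into $\|\sqrt{a_1\beta}L-\sqrt{a_2\alpha}P\|_{\Omega}^2$ and $\|\sqrt{a_1\lambda}L-\sqrt{a_3\gamma}\ell\|_{\Gamma}^2$, but in doing so it consumes precisely the diagonal terms (in particular $-a_3\gamma\|\ell\|_{\Gamma}^2$) that must supply the zero-order damping of the surface species. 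Your displayed differential inequality has only gradient terms on its left-hand side, and gradients cannot dominate $a_3\|\ell\|_{\Gamma}^2+a_4\|p\|_{\Gamma_2}^2$: the perfect square vanishes whenever $\ell$ is proportional to the trace of $L$, and the surface diffusion coefficients $d_\ell,d_p$ are allowed to vanish in this model, so no surface Poincar\'e inequality can rescue the passage from your display to $\tfrac12\mathcal E_w'+\kappa'\mathcal E_w\le C$. The paper does the opposite: it keeps the diagonal reaction terms intact, using $\mathcal H=\tfrac12\bigl(\|L\|_{\Omega}^2+\|P\|_{\Omega}^2+\sigma\|l\|_{\Gamma}^2+\xi\|p\|_{\Gamma_2}^2\bigr)$, whose dissipation retains $-\beta\|L\|_{\Omega}^2-\alpha\|P\|_{\Omega}^2-\gamma\sigma\|l\|_{\Gamma}^2-\sigma^2\|l\|_{\Gamma_2}^2-\xi^2\|p\|_{\Gamma_2}^2$ (the weights $\sigma,\xi$ are chosen so that the chain cross terms $\sigma\xi(l,p)_{\Gamma_2}$ and $\xi(P,p)_{\Gamma_2}$ are absorbed by Young), and it disposes of every remaining cross term, bulk or boundary, via the interpolation estimates \eqref{interp1}, $\|u\|_{\Omega}^2\le\varepsilon\|\nabla u\|_{\Omega}^2+C_\varepsilon\|u\|_{L^1(\Omega)}^2$ and $\|u\|_{\Gamma}^2\le\varepsilon\|\nabla u\|_{\Omega}^2+C_\varepsilon\|u\|_{L^1(\Omega)}^2$. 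The $L^1$ remainders are the point: they also remove your self-identified ``main obstacle'', since $\varepsilon$ can be taken arbitrarily small with the loss entering only the additive constant through the conserved masses, so no balance between $d_P$, $\xi$, $\sigma$ and the geometry is ever needed; your version, Grisvard with $L^2(\Omega)$ remainder followed by Poincar\'e, would prove the theorem only under a size condition on $d_P$, which is strictly weaker than the statement. The exponential rate then comes from the retained reaction terms, $\eta=\tfrac12\min\{2\beta,2\alpha,\gamma,\xi\}$, not from diffusion.

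Your fallback route is circular. A Pierre-type duality estimate on a window $[t,t+1]$ (essentially what the paper runs later in Lemma \ref{lem1}) bounds $\int_t^{t+1}\|L+P\|_{\Omega}^2\,ds$ in terms of the $L^2$ norms of the data at time $t$, namely $\|L(t)+P(t)\|_{\Omega}$, $\|l(t)\|_{\Gamma}$, $\|p(t)\|_{\Gamma_2}$ --- exactly the quantities you are trying to bound uniformly in $t$; only on the fixed horizon $[0,T]$ with the prescribed initial data does it yield an unconditional constant $C_T$. Feeding such a window bound into the uniform Gronwall lemma with $E'\le 2\delta_1E$ produces a recursion of the type $E(t+1)\le Ce^{2\delta_1}\bigl(1+E(t)\bigr)$, which grows rather than closes; moreover, duality for $L+P$ gives no information on the surface components $\|l\|_{\Gamma}^2$, $\|p\|_{\Gamma_2}^2$ at all. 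So the $\varepsilon$-interpolation with $L^1$ remainders, combined with retaining the reaction damping, is not an optional refinement --- it is the ingredient that makes the argument close.
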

         \begin{proof}
         Define a quadratic functional
         \begin{equation*}
         \mathcal H(t) = \frac{1}{2}\Bigl(\|L(t)\|_{\Omega}^2 + \|P(t)\|_{\Omega}^2 + \sigma\|l(t)\|_{\Gamma}^2 + \xi\|p(t)\|_{\Omega}^2\Bigr).
         \end{equation*}
         By calculating the time derivative of $\mathcal H$ along solutions of system \eqref{f1}--\eqref{f3}, we get
         \begin{equation}
         \begin{aligned}
         \frac{d\mathcal H}{dt} =& -d_L\|\nabla L\|_{\Omega}^2 - d_P\|\nabla P\|_{\Omega}^2 - \beta\|L\|_{\Omega}^2 - \alpha\|P\|_{\Omega}^2 - \lambda\|L\|_{\Gamma}^2\\
         &+(\alpha+\beta)(L,P)_{\Omega} + (\lambda\sigma + \gamma)(L, l)_{\Gamma} + \xi(P,p)_{\Gamma_2}\\
         &- \gamma\sigma\|l\|_{\Gamma}^2 - \sigma^2\|l\|_{\Gamma_2}^2 - \xi^2\|p\|_{\Gamma_2}^2 + \sigma\xi(p,l)_{\Gamma_2}.
         \end{aligned}
         \label{hh1}
         \end{equation}
         We recall two well-known interpolation estimates (see e.g. \cite[Theorem 1.3, Page 18]{BaVi} and \cite[Theorem 1.5.1.10]{Grisvard}): there exists for any $\varepsilon >0$ a constant $C_{\varepsilon}>0$ such that
         \begin{equation}\label{interp1}
	               \|u\|_{\Omega}^2 \leq \varepsilon\|\nabla u\|_{\Omega}^2 + C_\varepsilon\|u\|_{L^1(\Omega)}^2,
                  \qquad\text{and}\qquad
                  \|u\|_{\Gamma}^2 \leq \varepsilon\|\nabla u\|_{\Omega}^2 + C_\varepsilon\|u\|_{L^1(\Omega)}^2,
         \end{equation}
         for all $u\in H^1(\Omega)$. Now, by using the Cauchy inequality and \eqref{interp1}, after standard computations we get from \eqref{hh1} that     
         \begin{equation*}
         \begin{aligned}
         \frac{d\mathcal H}{dt}\leq &-\frac{d_L}{2}\|\nabla L\|_{\Omega}^2 - \frac{d_P}{2}\|\nabla P\|_{\Omega}^2 - \beta\|L\|_{\Omega}^2 - \alpha\|P\|_{\Omega}^2
         - \frac{\gamma\sigma}{2}\|l\|_{\Gamma}^2 - \frac{\xi^2}{2}\|p\|_{\Gamma_2}^2\\ &+ C(\|L\|_{L^1(\Omega)}^2 + \|P\|_{L^1(\Omega)}^2).
         \end{aligned}
         \end{equation*}
         By defining $\eta := \frac{1}{2}\min\{2\beta, 2\alpha, \gamma, \xi\}$ and by using $\|L\|_{L^1(\Omega)}\leq M$ and $\|P\|_{L^1(\Omega)}\leq M$ (thanks to the mass conservation in Theorem \ref{thm:existence}), we have
         \begin{equation}
         \frac{d\mathcal H}{dt} + \eta \mathcal H \leq C.
         \end{equation}
         By Gronwall's inequality, we obtain in particular 
$         \mathcal H(t)\leq e^{-\eta t}\mathcal H(0) + C$,
         which completes the proof.
         \end{proof}
 \medskip         

         To study the regularity of solutions to \eqref{f1}--\eqref{f3}, we need the following Lemma:
         \medskip
         
         \begin{lemma}[Regularity of Parabolic Equations, \cite{Evans,KC06}]\label{regularity}\hfill \\
         		Assume that $\Omega\subset \mathbb{R}^n$ is a bounded domain with regular boundary $\Gamma:= \partial\Omega$ (says $\Gamma \in C^{2+\alpha}, \alpha>0$). Consider the following initial boundary value problem
         		\begin{equation}\label{heateq}
		         		\begin{cases}
				         		u_t - d\Delta u = f, &\quad x\in\Omega, \qquad t>0,\\
				         		d\partial_{\nu}u +\alpha u= g, &\quad x\in\Gamma, \qquad t>0,\\
				         		u(0,x) = u_0(x), &\quad x\in\Omega.
		         		\end{cases}
         		\end{equation}
         		Assume that $\alpha \geq 0$, $u_0\in L^2(\Omega)$, $g\in L^2(0,T;H^{\theta-1/2}(\Gamma))\cap H^{\theta}(0,T;H^{-1/2}(\Gamma))$ and $f\in L^2(0,T;(H^{1-\theta}(\Omega))^*)$ for some $\theta\in [0,1]$. 
Then, \eqref{heateq} has a unique weak solution satisfying, for any $0<\tau<T<+\infty$,
         		\begin{equation}\label{SecondReg}
         	u\in L^2(\tau,T;H^{1+\theta}(\Omega)) \cap H^{1}(\tau,T;(H^{1-\theta}(\Omega))^*).
         		\end{equation}
         \end{lemma}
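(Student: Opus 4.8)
The plan is to establish the two endpoint regularities $\theta=0$ and $\theta=1$ and then recover the full scale $\theta\in(0,1)$ by interpolation, the boundary datum being prescribed precisely in the natural anisotropic trace space. First I would set up the weak formulation: testing with $v\in H^1(\Omega)$ and integrating by parts turns \eqref{heateq} into $\frac{d}{dt}(u,v)_\Omega + a(u,v) = \langle f,v\rangle + \langle g,v\rangle_\Gamma$ with the symmetric, $H^1$-continuous form $a(u,v) = d(\nabla u,\nabla v)_\Omega + \alpha(u,v)_\Gamma$, which satisfies a G\r{a}rding inequality exactly as in Lemma~\ref{lem:Garding} (again using the trace estimate $\|f\|_\Gamma^2 \le \varepsilon\|\nabla f\|_\Omega^2 + C_\varepsilon\|f\|_\Omega^2$). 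By the Lions--Lax--Milgram/Galerkin theory for parabolic equations (e.g. \cite{DL92,Evans}), for $u_0\in L^2(\Omega)$, $f\in L^2(0,T;(H^1(\Omega))^*)$ and $g\in L^2(0,T;H^{-1/2}(\Gamma))$ there is a unique weak solution with $u\in L^2(0,T;H^1(\Omega))$ and $u_t\in L^2(0,T;(H^1(\Omega))^*)$ satisfying $u(0)=u_0$. Since $(H^{1-\theta})^*=(H^1)^*$ and $H^{\theta-1/2}=H^{-1/2}$ at $\theta=0$, this is exactly \eqref{SecondReg} for $\theta=0$, and notably no positive $\tau$ is required here.

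Next I would treat the endpoint $\theta=1$, that is, maximal $L^2$-regularity. The Robin realisation $A:=-d\Delta$ on the domain $\{w\in H^2(\Omega): d\partial_\nu w + \alpha w = 0\}$ is self-adjoint and nonnegative (as $\alpha\ge 0$), generates an analytic semigroup on $L^2(\Omega)$, and has $D(A^{1/2})=H^1(\Omega)$. For $t\ge\tau>0$, parabolic smoothing upgrades the merely $L^2$ initial datum so that $u(t)\in D(A)$, and with $f\in L^2(0,T;L^2(\Omega))$ maximal regularity yields $u\in L^2(\tau,T;D(A))\cap H^1(\tau,T;L^2(\Omega))$. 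The inhomogeneous boundary term is incorporated by a lift: choosing $G$ with $d\partial_\nu G+\alpha G = g$ and passing to $w=u-G$ reduces to the homogeneous boundary problem with right-hand side $f - G_t + d\Delta G\in L^2(0,T;L^2)$, the prescribed regularity $g\in L^2(0,T;H^{1/2}(\Gamma))\cap H^1(0,T;H^{-1/2}(\Gamma))$ being exactly what the trace theorem requires for such a lift $G$ to lie in $L^2(0,T;H^2)\cap H^1(0,T;L^2)$. Finally, the $C^{2+\alpha}$-regularity of $\Gamma$ together with elliptic $H^2$-regularity for the stationary Robin problem $-d\Delta u = f-u_t$ identifies $D(A)$ with the $H^2$-functions obeying the boundary condition, giving \eqref{SecondReg} for $\theta=1$.

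For general $\theta\in(0,1)$ I would interpolate. On each slab $[\tau,T]$ the solution map $(u_0,f,g)\mapsto u$ is linear and bounded between the two endpoint data-to-solution configurations just established; since $L^2(0,T;H^{1+\theta}(\Omega))\cap H^1(0,T;(H^{1-\theta}(\Omega))^*)$ and the stated data spaces arise as the real (or complex) interpolation spaces between the $\theta=0$ and $\theta=1$ settings, the intermediate estimate follows by interpolation of the solution operator. \emph{The main obstacle} is the sharp identification of the boundary-data space: one must verify that the normal-derivative trace of the anisotropic space $L^2(0,T;H^{1+\theta}(\Omega))\cap H^1(0,T;(H^{1-\theta})^*)$ is exactly $L^2(0,T;H^{\theta-1/2}(\Gamma))\cap H^{\theta}(0,T;H^{-1/2}(\Gamma))$, which is the anisotropic trace theory of Lions--Magenes and which guarantees that the interpolation of the data spaces matches the interpolation of the solution spaces. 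This is precisely the content worked out in \cite{Evans,KC06}, on which I would rely for the remaining technical details.
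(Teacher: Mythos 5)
The paper does not actually prove Lemma \ref{regularity}: it is stated as a quoted result with the references \cite{Evans,KC06}, so there is no in-paper argument to compare yours against line by line. Your outline is the standard route by which such results are established, and it is essentially sound. The $\theta=0$ endpoint via the G\r{a}rding/variational theory is correct (and, as you note, needs no $\tau>0$); the $\theta=1$ endpoint is also correctly assembled: the key point making your lift work is that the elliptic Robin solution operator $R$ (solving $-d\Delta w + w =0$, $d\partial_\nu w+\alpha w=g$) is simultaneously bounded from $H^{-1/2}(\Gamma)$ to $H^1(\Omega)$ and from $H^{1/2}(\Gamma)$ to $H^2(\Omega)$, so that $G=Rg$ inherits $L^2(0,T;H^2)\cap H^1(0,T;L^2)$ from the assumed regularity of $g$, and analytic-semigroup smoothing handles the merely-$L^2$ initial datum on $[\tau,T]$. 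The genuine technical crux is exactly the one you flag: for intermediate $\theta$, interpolation of intersections only gives the inclusion $[X_0\cap Y_0,\,X_1\cap Y_1]_\theta \subset [X_0,X_1]_\theta\cap[Y_0,Y_1]_\theta$, which is the wrong direction for placing the \emph{data} $(u_0,f,g)$ in the interpolated data space; closing this requires the Lions--Magenes anisotropic trace/interpolation identifications (retract arguments), which you defer to \cite{Evans,KC06} --- the same level of deferral the paper adopts for the entire lemma, so this is acceptable rather than a flaw. It is also worth observing that the paper only ever invokes the lemma at the endpoints $\theta=1$ (for $L$) and $\theta=0$ (for $P$) in Theorem \ref{Reg}, so your two endpoint arguments, which are the complete and self-contained part of your proposal, already cover every use made of the lemma in this paper.
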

%
         \medskip
         
         \begin{theorem}[Regularity of Solutions]\label{Reg}
                  For any $I = [\tau, T]$ with $0<\tau<T<+\infty$, the weak solution $(L, P, l, p)$ to \eqref{f1}--\eqref{f3} satisfies the following regularities
\begin{align*}
         L &\in L^2(I;H^2(\Omega)) \cap H^{1}(I;L^2(\Omega)),\\
         P &\in L^2(I;H^{3/2}(\Omega)) \cap H^{1}(I;H^{-1/2}(\Omega)),\\
        	l &\in L^2(I;H^2(\Gamma)) \cap H^{1}(I;L^2(\Gamma)),\\
           	p &\in L^2(I;H^2(\Gamma_2)) \cap H^{1}(I;L^2(\Gamma_2)).
\end{align*}
         \end{theorem}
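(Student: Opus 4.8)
The plan is to prove each of the four regularities separately by applying the scalar parabolic regularity result Lemma \ref{regularity} (together with its analogue on the compact manifolds $\Gamma$ and $\Gamma_2$) to the individual equations, treating the coupling terms as given inhomogeneities. The starting point is the base regularity from Definition \ref{def:WeakSolution} and Theorem \ref{thm:existence}, namely $L,P \in C([0,T];L^2(\Omega))\cap L^2(0,T;H^1(\Omega))$, $l \in C([0,T];L^2(\Gamma))\cap L^2(0,T;H^1(\Gamma))$ and $p \in C([0,T];L^2(\Gamma_2))\cap L^2(0,T;H^1(\Gamma_2))$. Since the weak solution starts from merely $L^2$ initial data, I would everywhere exploit the parabolic smoothing encoded in Lemma \ref{regularity}, which upgrades regularity on any interval $(\tau,T)$ with $\tau>0$; to chain the estimates I fix auxiliary times $0<\tau_2<\tau_1=\tau<T$.

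First I would establish the two surface regularities, since these equations only require $L^2(\Gamma_T)$-, resp. $L^2(\Gamma_{2,T})$-control of their right-hand sides. Rewriting the first line of \eqref{f3} as $l_t - d_l\Delta_\Gamma l = f_l$ on the closed surface $\Gamma$ with $f_l := \lambda\, L|_\Gamma - (\gamma+\sigma\chi_{\Gamma_2})l$, the trace theorem gives $L|_\Gamma \in L^2(0,T;H^{1/2}(\Gamma))\subset L^2(0,T;L^2(\Gamma))$ while the remaining terms lie in $L^2(0,T;L^2(\Gamma))$, so $f_l\in L^2(\Gamma_T)$; note that the characteristic function $\chi_{\Gamma_2}$ is harmless here, entering only as a bounded coefficient of a source term. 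Maximal $L^2$-regularity for $-\Delta_\Gamma$ on the compact manifold $\Gamma$ then yields $l\in L^2(\tau_2,T;H^2(\Gamma))\cap H^1(\tau_2,T;L^2(\Gamma))$. Entirely analogously, $p_t - d_p\Delta_{\Gamma_2}p = \sigma l - \xi p =: f_p \in L^2(\Gamma_{2,T})$ with the homogeneous Neumann condition on $\partial\Gamma_2$ from \eqref{f3} produces $p\in L^2(\tau_2,T;H^2(\Gamma_2))\cap H^1(\tau_2,T;L^2(\Gamma_2))$.

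Next I would treat the volume equations on the shorter interval, using the surface regularity just obtained. For $L$, I apply Lemma \ref{regularity} with diffusion $d_L$ and Robin coefficient $\lambda$ (rewriting the first line of \eqref{f2} as $d_L\partial_\nu L+\lambda L=\gamma l$), source $\alpha P - \beta L\in L^2(\Omega_T)$, boundary datum $g = \gamma l$, and $\theta=1$. The required datum regularity $g\in L^2(\tau_2,T;H^{1/2}(\Gamma))\cap H^1(\tau_2,T;H^{-1/2}(\Gamma))$ is immediate from $l\in L^2(\tau_2,T;H^2(\Gamma))\cap H^1(\tau_2,T;L^2(\Gamma))$ via $H^2(\Gamma)\hookrightarrow H^{1/2}(\Gamma)$ and $L^2(\Gamma)\hookrightarrow H^{-1/2}(\Gamma)$, and $L^2(\Omega_T)=L^2(0,T;(H^0(\Omega))^*)$; this gives $L\in L^2(\tau,T;H^2(\Omega))\cap H^1(\tau,T;L^2(\Omega))$. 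For $P$, I apply Lemma \ref{regularity} with diffusion $d_P$, vanishing Robin coefficient (the $P$-line of \eqref{f2} being pure Neumann), source $\beta L - \alpha P\in L^2(\Omega_T)\subset L^2(0,T;(H^{1/2}(\Omega))^*)$, boundary datum $g=\xi\chi_{\Gamma_2}p$, but now only with $\theta=1/2$. Here the hypotheses read $g\in L^2(\tau_2,T;L^2(\Gamma))\cap H^{1/2}(\tau_2,T;H^{-1/2}(\Gamma))$: the first follows since $p\in L^2 L^2$ and extension by zero from $\Gamma_2$ to $\Gamma$ is bounded in $L^2$, the second from $p\in H^1(\tau_2,T;L^2(\Gamma_2))\hookrightarrow H^{1/2}(\tau_2,T;L^2(\Gamma_2))$ together with $L^2(\Gamma)\hookrightarrow H^{-1/2}(\Gamma)$. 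This yields $P\in L^2(\tau,T;H^{3/2}(\Omega))\cap H^1(\tau,T;H^{-1/2}(\Omega))$, and since $\tau>0$ is arbitrary the claim holds on every $I=[\tau,T]$.

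The main obstacle — and the reason $P$ only attains $H^{3/2}$ rather than the $H^2$ obtained for $L$ — is precisely the limited regularity of the Neumann datum $g=\xi\chi_{\Gamma_2}p$ for the $P$-equation. Because $\chi_{\Gamma_2}$ jumps across the interface $\partial\Gamma_2$, the product $\chi_{\Gamma_2}p$ (extended by zero) belongs to $H^s(\Gamma)$ only for $s<1/2$ and fails to lie in $H^{1/2}(\Gamma)$, so the choice $\theta=1$ is unavailable and $\theta=1/2$ is optimal within this scheme. Checking the correct fractional time regularity $H^{1/2}(\tau_2,T;H^{-1/2}(\Gamma))$ of this datum is where the ordering of the bootstrap matters, since it forces the surface estimates (in particular $p\in H^1L^2$) to be established \emph{before} the volume estimate for $P$. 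A secondary, more routine point is to justify the manifold analogue of Lemma \ref{regularity} on $\Gamma$ and $\Gamma_2$, which follows from the analyticity of the semigroup generated by the Laplace--Beltrami operator and the associated maximal $L^2$-regularity, again with parabolic smoothing absorbing the merely $L^2$ initial data and accounting for the time shift to $(\tau,T)$.
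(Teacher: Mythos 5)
Your proof is correct and follows essentially the same route as the paper: surface equations for $l$ and $p$ first via maximal regularity of the heat equation on the manifolds $\Gamma$ and $\Gamma_2$, then $L$ via Lemma \ref{regularity} with $\theta=1$ using the just-obtained regularity of the Robin datum $\gamma l$, then $P$ with a fractional $\theta$ limited by the jump of $\chi_{\Gamma_2}$. Your treatment of $P$ is in fact slightly more careful than the paper's: the paper invokes Lemma \ref{regularity} with $\theta=0$ (which would only yield $L^2(I;H^1(\Omega))$) yet states the conclusion corresponding to $\theta=1/2$, whereas you correctly take $\theta=1/2$ and explicitly verify the required $H^{1/2}(\tau_2,T;H^{-1/2}(\Gamma))$ time-regularity of the Neumann datum $\chi_{\Gamma_2}\xi p$ (via $p\in H^1(\tau_2,T;L^2(\Gamma_2))$ and boundedness of extension by zero), which is exactly what the claimed $H^{3/2}(\Omega)$ regularity needs.
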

         \begin{proof}
First, by considering the equation for $l$,
					\begin{equation*}
									l_t - d_l\Delta_{\Gamma}l = \lambda L - (\gamma + \chi_{\Gamma_2}\sigma)l \;\in \;L^2(0,T;L^2(\Gamma)), \qquad l(0) = l_0 \in L^2(\Gamma),
					\end{equation*}
we observe that $l\in L^2(I;H^2(\Gamma))\cap H^{1}(I;L^2(\Gamma))$ thanks to the regularity of solutions to heat equation on smooth Riemann manifolds $\Gamma$ without boundary (see, e.g. \cite[Chapter 6]{Taylorbook}). A similar result for heat equation on smooth Riemann manifold $\Gamma_2$ with smooth boundary $\partial\Gamma_2$ applies to
					\begin{equation*}
									p_t - d_p\Delta_{\Gamma_2}p = \sigma l - \xi p \; \in \; L^2(0,T;L^2(\Gamma_2)), \qquad 
									d_p\partial p/\partial \nu_{\Gamma_2} = 0, \qquad p(0) = p_0\in L^2(\Gamma_2),
					\end{equation*}
which yields $p\in L^2(I;H^2(\Gamma_2))\cap H^{1}(0,T;L^2(\Gamma_2))$. Turn to the equation for $L$,
					\begin{equation*}
							\begin{gathered}
								L_t - d_L\Delta L = -\beta L + \alpha P \in L^2(0,T;L^2(\Omega))\\
								d_L\partial L/\partial \nu + \lambda L = \gamma l\in L^2(I;H^{1/2}(\Gamma))\cap H^{1}(I;H^{-1/2}(\Gamma)),  \qquad
								L(0) = L_0\in L^2(\Omega),
							\end{gathered}
					\end{equation*}
Then, we can apply Lemma \ref{regularity} with $\theta = 1$ to obtain $L\in L^2(I;H^2(\Omega))\cap H^{1}(I;L^2(\Omega))$.
					
 It remains to show the regularity of $P$ which is the solution to
					\begin{equation*}
									P_t - d_P\Delta P = \beta L - \alpha P \in L^2(0,T;L^2(\Omega)),\qquad 
									d_P\partial P/\partial\nu = \chi_{\Gamma_2}\xi p,\qquad 
									P(0) = P_0\in L^2(\Omega).
					\end{equation*}
We cannot expect $P$ to have the same regularity as $L$ because of the low regularity of the characteristic function $\chi_{\Gamma_2}$ (e.g. $\chi_{\Gamma_2}\not\in H^{1/2}(\Gamma)$). However, since $\chi_{\Gamma_2}\in L^{\infty}(\Gamma)$ and, thus,  
$\chi_{\Gamma_2}\xi p\in L^2(0,T;L^2(\Gamma))$, we can apply Lemma \ref{regularity} with $\theta = 0$ to conlude
					\begin{equation*}
							P \in L^2(I;H^{3/2}(\Omega))\cap W^{1,2}(I;H^{-1/2}(\Omega)).
					\end{equation*}
         \end{proof} 
         \medskip
         
         \begin{remark} (Further Regularity of Solutions)
We remark that the relatively lower regularity of $P$ in the Theorem \ref{Reg} 
stems exclusively from the low regularity of the characteristic function 
$\chi_{\Gamma_2}$. It is thus a mathematical consequence of the modelling choice 
of a bounded cut-off function $\chi_{\Gamma_2} \in L^{\infty}$ for the activity range of aPKC. 
 
It seems biologically equally justified to replace $\chi_{\Gamma_2}$ by a smoothed cut-off function $\chi_{\Gamma_2}^{\epsilon}:\Gamma_2 \rightarrow [0,1]$, for sufficiently small $\epsilon>0$, satisfying that $\chi_{\Gamma_2}^{\epsilon}$ vanished on $\partial\Gamma_2$.
Then, we obtain the full regularity $P\in L^2(I;H^2(\Omega))\cap H^{1}(I;L^2(\Omega))$. In this case, we can further bootstrap to obtain arbitrary high Sobolev regularity of $l$, $p$, $L$ and $P$ provided sufficient regularity of the boundaries $\Gamma$ and $\partial\Gamma_2$.

\end{remark}

         \section{Quasi-Steady-State Approximation}\label{sec:4}
         In this section, we study the Quasi-Steady-State Approximation (QSSA) for the system \eqref{f1}--\eqref{f3} as $\xi \rightarrow +\infty$. The limit $\xi \rightarrow +\infty$ can be interpreted as the instantaneous release of phosphorylated Lgl from the cell cortex into the cell cytoplasm. For technical reasons (see Lemma \ref{lem1} and Remark \ref{remark:QSSAsurfacediffusion}), we shall restrict our analysis to the case without boundary diffusion, i.e. $d_l=0=d_p$.  The QSSA for system \eqref{f1}--\eqref{f3} with surface diffusion constitutes currently an open problem.

         Quasi-Steady-State Approximations for (bio)chemical reaction systems have long been studied in terms of asymptotic expansions, but it was not until recently that rigorous results were obtained for the corresponding fast-reaction limits 
         (see e.g. \cite{Bothe1, BH, BP, BP1, BCD, CD} and references therein). 
         

         \begin{figure}[htp]
         \begin{center}\scalebox{1}[1]{
         \begin{tikzpicture}
         \node (a) {$L$} 	node (b) at (2,0) {$P$} 	node (c) at (0,-2) {$\ell$} 	node (d) at (2,-2)  {$p$} 		node (aa) at (7,0) {$L$} 	node (bb) at (9,0) {$P$} 	node (cc) at (7,-2) {$\ell$};
         \draw[arrows=->] ([xshift =0.5mm,yshift=0.5mm]a.east) -- node [below] {\scalebox{.8}[.8]{$\alpha$}} ([xshift =-0.5mm,yshift=0.5mm]b.west);
         \draw[arrows=->] ([xshift =-0.5mm,yshift=-0.5mm]b.west) -- node [above] {\scalebox{.8}[.8]{$\beta$}} ([xshift =0.5mm,yshift=-0.5mm]a.east);
         \draw[arrows=->] ([xshift =0.5mm,yshift=0.5mm]c.north) -- node [right] {\scalebox{.8}[.8]{$\gamma$}} ([xshift =0.5mm,yshift=-0.5mm]a.south);
         \draw[arrows=->] ([xshift =-0.5mm,yshift=-0.5mm]a.south) -- node [left] {\scalebox{.8}[.8]{$\lambda$}} ([xshift =-0.5mm,yshift=0.5mm]c.north);
         \draw[arrows=->] ([xshift =0.5mm]c.east) -- node [below] {\scalebox{.8}[.8]{$\sigma$}} ([xshift=-0.5mm]d.west);
         \draw[arrows=->] ([yshift =0.5mm]d.north) -- node [right] {\scalebox{.8}[.8]{$\xi$}} ([yshift=-0.5mm]b.south);
         \draw[arrows=->] ([xshift =0.5mm,yshift=0.5mm]aa.east) -- node [below] {\scalebox{.8}[.8]{$\alpha$}} ([xshift =-0.5mm,yshift=0.5mm]bb.west);
         \draw[arrows=->] ([xshift =-0.5mm,yshift=-0.5mm]bb.west) -- node [above] {\scalebox{.8}[.8]{$\beta$}} ([xshift =0.5mm,yshift=-0.5mm]aa.east);
         \draw[arrows=->] ([xshift =0.5mm,yshift=0.5mm]cc.north) -- node [right] {\scalebox{.8}[.8]{$\gamma$}} ([xshift =0.5mm,yshift=-0.5mm]aa.south);
         \draw[arrows=->] ([xshift =-0.5mm,yshift=-0.5mm]aa.south) -- node [left] {\scalebox{.8}[.8]{$\lambda$}} ([xshift =-0.5mm,yshift=0.5mm]cc.north);
         \draw[arrows=->] ([xshift =0.5mm]cc.east) -- node [below] {\scalebox{.8}[.8]{$\sigma$}} ([xshift=-0.5mm]bb.south);
         \draw[arrows=->] ([xshift =6mm, yshift=-1cm]b.east) -- node [above] {\scalebox{.8}[.8]{$\xi \rightarrow +\infty$}} ([xshift =-6mm, yshift=-1cm]aa.west);
         \end{tikzpicture}}
         \end{center}
         \caption{The original Lgl model system (left) and the reduced QSSA system (right). The reduced QSSA model, like the full model, is still a weakly-reversible system.}
         \label{fig:QSSA}
         \end{figure}
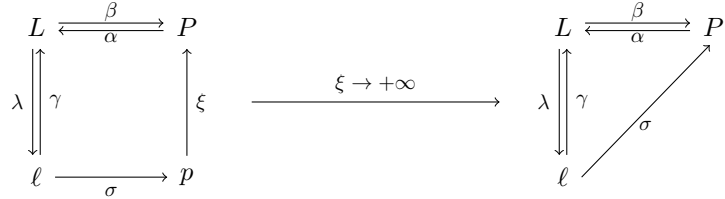
         
         The QSSA we shall study in this Section can be illustrated as the passage of the left reaction
         diagram towards the right reaction diagram in Figure \ref{fig:QSSA}.
         Without the surface diffusion terms, the system \eqref{f1}--\eqref{f3} rewrites as
         \begin{equation}
         \begin{cases}
         L_t- d_L\Delta L = -\beta L + \alpha P, &\qquad x\in\Omega,\quad t> 0,\\
         d_L\frac{\partial L}{\partial \nu} + \lambda L = \gamma l, &\qquad x\in\Gamma,\quad t> 0,\\
         L(0,x) = L_0(x), &\qquad x\in\Omega,
         \end{cases}
         \label{newf1}
         \end{equation}
         \begin{equation}
         \begin{cases}
         P_t - d_P\Delta P = \beta L - \alpha P, &\qquad x\in\Omega,\quad t> 0,\\
         d_P\frac{\partial P}{\partial \nu} = \chi_{\Gamma_2}\xi p, &\qquad x\in\Gamma,\quad t> 0,\\
         P(0,x) = P_0(x), &\qquad x\in\Omega,
         \end{cases}
         \label{newf2}
         \end{equation}
         \begin{equation}
         \begin{cases}
         l_t  = \lambda L - (\gamma + \chi_{\Gamma_2}\sigma)l, &\qquad x\in\Gamma,\quad t> 0,\\
         l(0,x) = l_0(x), &\qquad x\in\Gamma,\\
         p_t = \sigma l - \xi p, &\qquad x\in\Gamma_2,\quad t> 0,\\
         p(0,x) = p_0(x), &\qquad x\in\Gamma_2.
         \end{cases}
         \label{newf3}
         \end{equation}
         
         Intuitively and according to the numerical example Fig. \ref{fig3a}, we expect from the second equation in \eqref{newf3} that in the limit $\xi \to +\infty$ the concentration $p(t,x)$ of phosphorylated Lgl on the boundary $\Gamma_2$ tends to zero for any positive time since all the $p$-Lgl on the active part of the cell cortex part is instantaneously released into the cytoplasm and becomes $P$-Lgl. 
         
         However, if the initial $p$-Lgl concentration is non-zero, i.e. $p_0(x)\neq 0$, an initial layer at $t=0$ will be forming in the limit $\xi\to +\infty$, which expresses the transfer of initial mass of $p_0$ into $P_0$ (see also Figure \ref{InitialMass} for a numerical  example). 
         
         Thus, the expected limiting system has the following form:
         \begin{equation}
         \begin{cases}
         L_t - d_L\Delta L = -\beta L + \alpha P,&\qquad x\in\Omega,\quad t>0,\\
         d_L\frac{\partial L}{\partial \nu} = -\lambda L + \gamma l,&\qquad x \in\Gamma,\quad t>0,\\
         L(0,x) = L_0(x), &\qquad x\in\Omega,
         \end{cases}
         \label{ff1}
         \end{equation}
         \begin{equation}
         \begin{cases}
         P_t - d_P\Delta P = \beta L - \alpha P, &\qquad x\in\Omega,\quad t>0,\\
         d_P\frac{\partial P}{\partial \nu} = \chi_{\Gamma_2}\sigma l, &\qquad x\in\Gamma,\quad t>0,\\
         P(0,x) = P_0(x) + P^*(x), &\qquad x\in\Omega,
         \end{cases}
         \label{ff2}
         \end{equation}
         and
         \begin{equation}
         \begin{cases}
         l_t = \lambda L - (\gamma + \sigma\chi_{\Gamma_2})l, &\qquad x\in\Gamma,\quad t>0,\\
         l(0,x) = l_0(x), &\qquad x\in\Gamma,
         \end{cases}
         \label{ff3}
         \end{equation}
         where we emphasise that $P^*$ is the unique function in $L^2(\Omega)$, which satisfies
         \begin{equation}\label{ff3a}
         {\int_{\Omega}P^*\varphi \, dx = \int_{\Gamma_2}p_0\varphi \,dS, \qquad  \forall\varphi\in H^1(\Omega).}
         \end{equation}
         Note that the system \eqref{ff1}--\eqref{ff3} corresponds to the reaction dynamics represented by the right diagram in Figure \ref{fig:QSSA}. Its reaction kinetics is still weakly-reversible. 
         
         \medskip
         
         \begin{remark}[Common Stationary States of Full System and QSSA]\label{rem41}\hfill\\
         We point out that the system \eqref{newf1}--\eqref{newf3} and the QSSA system \eqref{ff1}--\eqref{ff3} with condition \eqref{ff3a} share the same stationary state $(L_{\infty}, P_{\infty}, l_{\infty})$. This is a consequence of the fact that the systems  \eqref{newf1}--\eqref{newf3} and \eqref{ff1}--\eqref{ff3} satisfying the same stationary state system (see \eqref{SS} below) and that 
         condition \eqref{ff3a} ensure identical initial total mass. 
         
         Indeed, it follows from \eqref{newf3} that $\xi p_{\infty} = \sigma l_{\infty}$ and $\lambda L_{\infty} = (\gamma + \sigma\chi_{\Gamma_2})l_{\infty}$. Inserting these two relations into  \eqref{newf1}--\eqref{newf2} yields the stationary state system
         \begin{equation}
         \begin{cases}
         -d_L\Delta L_{\infty} = -\beta L_{\infty} + \alpha P_{\infty},&\qquad x\in\Omega,\\
         - d_P\Delta P_{\infty} = \beta L_{\infty} - \alpha P_{\infty}, &\qquad x\in\Omega,\\
         d_L\frac{\partial L_{\infty}}{\partial \nu} = 
         - \frac{\sigma \lambda}{\gamma + \sigma} L_{\infty},&\qquad x \in\Gamma_2,\\
         d_L\frac{\partial L_{\infty}}{\partial \nu} = 0,&\qquad x \in\Gamma\setminus\Gamma_2,\\
         d_P\frac{\partial P_{\infty}}{\partial \nu} = \frac{\sigma \lambda}{\gamma + \sigma} L_{\infty}, &\qquad x\in\Gamma_2,\\
         d_P\frac{\partial P_{\infty}}{\partial \nu} = 0, &\qquad x\in\Gamma\setminus\Gamma_2,
         \end{cases}
         \label{SS}
         \end{equation}
         which is also the stationary state system of the QSSA system \eqref{ff1}--\eqref{ff3}. In fact, by solving \eqref{SS} the stationary concentration $l_{\infty}$ and $p_{\infty}$ are afterwards calculated from $L_{\infty}$ and $P_{\infty}$ for both systems  \eqref{newf1}--\eqref{newf3} and \eqref{ff1}--\eqref{ff3}. 
         
         Moreover, the stationary state system \eqref{SS} can be solved by observing that 
         \begin{equation}
         \begin{cases}
         -\Delta (d_L L_{\infty} + d_P P_{\infty}) = 0,&\qquad x\in\Omega,\\
         \frac{\partial}{\partial \nu}\bigl(d_L L_{\infty} + d_P P_{\infty}\bigr) = 0,&\qquad x \in\Gamma.
         \end{cases}
         \label{SS1}
         \end{equation}
         Thus, the sum $d_L L_{\infty} + d_P P_{\infty}=C$ equals a constant $C$ for all $x\in\Omega$ and the stationary state concentrations $L_{\infty}$ or $P_{\infty}$, respectively are obtained by solving 
         an inhomogeneous linear elliptic boundary value problem with mixed Neumann/Robin boundary data. For instance, the equilibrium concentration 
         $L_{\infty}$ satisfies
         \begin{equation}
         \begin{cases}
         -d_L\Delta L_{\infty} +\bigl(\beta +\alpha\frac{d_L}{d_P}\bigr) L_{\infty} = \alpha \frac{C}{d_P},&\qquad x\in\Omega,\\
         d_L\frac{\partial L_{\infty}}{\partial \nu} = -\frac{\sigma \lambda}{\gamma+\sigma} L_{\infty},&\qquad x \in\Gamma_2,\\
         d_L\frac{\partial L_{\infty}}{\partial \nu} = 0,&\qquad x \in\Gamma\setminus\Gamma_2,\\
         \end{cases}
         \label{SSLinf}
         \end{equation}
         
         By standard computations, the stationary state $L_{\infty}$ is unique for each constant $C$.
%
         Moreover, the constant $C$ is itself determined by the conserved initial total mass.
         As a consequence, since the system \eqref{newf1}--\eqref{newf3} and its QSSA system\eqref{ff1}--\eqref{ff3} with the condition \eqref{ff3a} share by construction the same initial total  mass,  
         the corresponding stationary states are identical. See Figure \ref{SteadyState} for numerical illustration.
         \end{remark}
         
         \medskip
         
         In the following, we will show that solutions to \eqref{newf1}--\eqref{newf3}
         converge towards solutions of the QSSA system \eqref{ff1}--\eqref{ff3} 
         as $\xi\to+\infty$.         
         We remark here that all generic constants $C$ and $C_T$ are \emph{independent of the reaction rate $\xi$}.
                  
         The following Lemma provides some crucial {\it a priori} estimates, which will  allow to pass to the limit $\xi \rightarrow +\infty$.
         
         \medskip
         
         \begin{lemma}[Uniform in $\xi$ Boundedness of Solutions to the Original System]\label{lem1}\hfill\\
         For any $T>0$, the solution $(L, P, l, p)$ to system \eqref{newf1}--\eqref{newf3} satisfies the following estimates: 
         \begin{equation}
         \|L\|_{L^2(\Omega_T)} + \|P\|_{L^2(\Omega_T)} + \|l\|_{L^2(\Gamma_T)}\leq C_T,
         \label{g1}
         \end{equation}
         and
         \begin{equation}\label{g1a}
         \|L\|_{L^2(0,T;H^1(\Omega))} \leq C_T,
         \end{equation}
the constants $C_T$ do not depend on the reaction rate $\xi$.
         \end{lemma}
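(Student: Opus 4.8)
The plan is to combine the mass conservation from Theorem~\ref{thm:existence} with a duality argument for the combined volume concentration $u:=L+P$, and then to recover the $H^1$-bound for $L$ and the $L^2$-bound for $l$ by a standard energy estimate. First I would record the only information that is manifestly uniform in $\xi$: since the data are nonnegative, Theorem~\ref{thm:existence} gives nonnegativity of $(L,P,l,p)$ together with the uniform bound $\sup_{t\in[0,T]}\bigl(\|L(t)\|_{L^1(\Omega)}+\|P(t)\|_{L^1(\Omega)}+\|l(t)\|_{L^1(\Gamma)}+\|p(t)\|_{L^1(\Gamma_2)}\bigr)\le M_0$, and integrating $p_t=\sigma l-\xi p$ over $\Gamma_{2,T}$ moreover yields the crucial $\xi$-uniform estimate $\|\xi p\|_{L^1(\Gamma_{2,T})}\le \sigma T M_0+\|p_0\|_{L^1(\Gamma_2)}$. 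Because $0\le L,P\le u$ pointwise, it suffices to bound $\|u\|_{L^2(\Omega_T)}$. Writing $a:=(d_L L+d_P P)/(L+P)\in[\underline d,\overline d]$ with $\underline d=\min\{d_L,d_P\}>0$, the variable $u$ solves $u_t-\Delta(au)=0$ in $\Omega$ with Neumann flux $\partial_\nu(au)=g$ on $\Gamma$, where $g=-\lambda L+\gamma l+\chi_{\Gamma_2}\xi p$. The key structural observation is that summing the two surface equations in \eqref{newf3} identifies this flux with a surface time derivative,
\begin{equation*}
g=-\partial_t\bigl(l+\chi_{\Gamma_2}p\bigr)\qquad\text{on }\Gamma,
\end{equation*}
which is the local form of mass conservation and removes the explicit factor $\xi$ from the boundary datum.

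For the duality step, given an arbitrary $0\le\theta\in L^2(\Omega_T)$ I would solve the backward dual problem $-\phi_t-a\Delta\phi=\theta$ in $\Omega$, with $\partial_\nu\phi=0$ on $\Gamma$ and $\phi(T)=0$ (working with smooth approximations of $a$ and $\theta$ and passing to the limit). Testing with $-\Delta\phi$ and using $\partial_\nu\phi=0$ produces the Pierre-type a priori bounds $\|\Delta\phi\|_{L^2(\Omega_T)}+\sup_t\|\nabla\phi(t)\|_{\Omega}+\|\phi_t\|_{L^2(\Omega_T)}\le C\|\theta\|_{L^2(\Omega_T)}$ with $C=C(\underline d,\overline d)$, and elliptic regularity upgrades this to an $L^2(0,T;H^2(\Omega))$ bound with an attendant trace bound. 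Pairing the equation for $u$ with $\phi$, integrating by parts, inserting the flux identity and integrating the surface term by parts in $t$ gives the $\xi$-free identity
\begin{equation*}
\int_{\Omega_T}u\,\theta\,dx\,dt=\int_{\Omega}(L_0+P_0)\,\phi(0)\,dx+\int_{\Gamma}\bigl(l_0+\chi_{\Gamma_2}p_0\bigr)\phi(0)\,dS+\int_0^T\!\!\int_{\Gamma}\bigl(l+\chi_{\Gamma_2}p\bigr)\phi_t\,dS\,dt .
\end{equation*}
Each term on the right is to be estimated by $\|\theta\|_{L^2(\Omega_T)}$ using the $L^2$-data, the $\xi$-uniform surface mass bounds, and the a priori bounds on $\phi$; taking the supremum over $\|\theta\|_{L^2(\Omega_T)}\le1$ then yields $\|u\|_{L^2(\Omega_T)}\le C_T$, hence \eqref{g1} for $L$ and $P$.

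With $\|L\|_{L^2(\Omega_T)},\|P\|_{L^2(\Omega_T)}\le C_T$ in hand, I would close by an energy estimate. Multiplying the $L$-equation by $L$ and using the Robin condition gives $\tfrac12\tfrac{d}{dt}\|L\|_\Omega^2+d_L\|\nabla L\|_\Omega^2+\lambda\|L\|_\Gamma^2=-\beta\|L\|_\Omega^2+\alpha(P,L)_\Omega+\gamma(l,L)_\Gamma$, where the cross term $\alpha(P,L)_\Omega$ is integrable in time by \eqref{g1} and the boundary term is split by the interpolation trace inequality \eqref{interp1}, absorbing $\|L\|_\Gamma^2$ into the dissipation. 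The $l$-equation is the pointwise ODE $l_t=\lambda L-(\gamma+\chi_{\Gamma_2}\sigma)l$ on $\Gamma$, giving $\|l(t)\|_\Gamma\le\|l_0\|_\Gamma+\lambda\int_0^t\|L(s)\|_\Gamma\,ds$ and thus $\int_0^T\|l\|_\Gamma^2\,dt\le C_T\bigl(\|l_0\|_\Gamma^2+\int_0^T\|L\|_\Gamma^2\,dt\bigr)$. Feeding this back and choosing the interpolation parameter small (depending on $T$) absorbs all gradient traces into $d_L\|\nabla L\|_\Omega^2$; Gronwall's inequality then yields \eqref{g1a} as well as $\|l\|_{L^2(\Gamma_T)}\le C_T$.

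The main obstacle is precisely the surface–volume coupling in the duality step: the factor $\xi$ enters as boundary data for $P$, so neither a direct energy estimate nor the classical duality lemma applies verbatim, and the constants must be kept independent of $\xi$. The decisive point is the flux identity $g=-\partial_t(l+\chi_{\Gamma_2}p)$, which converts the $\xi$-singular Neumann datum into a surface time derivative that can be integrated by parts against $\phi$. The genuinely delicate part is then to control the resulting boundary pairing $\int_0^T\int_\Gamma(l+\chi_{\Gamma_2}p)\phi_t$ using only the $\xi$-uniform $L^1$-type surface bounds together with the limited (energy-level) regularity of $\phi$ up to $\Gamma$ — this is exactly the technical novelty of the volume–surface QSSA highlighted in the introduction, and is the reason the argument is currently restricted to the case $d_l=d_p=0$.
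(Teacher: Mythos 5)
Your skeleton coincides with the paper's up to the decisive step: you identify the same flux identity $-\lambda L+\gamma l+\chi_{\Gamma_2}\xi p=-\partial_t(l+\chi_{\Gamma_2}p)$ from \eqref{newf3}, you work with the sum $L+P$ and its diffusion-weighted companion, and your closing energy/Gronwall argument for \eqref{g1a} and the bound on $l$ is essentially the paper's (testing \eqref{newf1} with $L$, pairing with the ODE for $l$, using \eqref{interp1}). The gap is inside the duality step itself. For your backward dual problem $-\phi_t-a\Delta\phi=\theta$, $\partial_\nu\phi=0$, $\phi(T)=0$, the estimates you invoke, namely $\sup_t\|\nabla\phi(t)\|_{\Omega}+\|\Delta\phi\|_{L^2(\Omega_T)}+\|\phi_t\|_{L^2(\Omega_T)}\le C\|\theta\|_{L^2(\Omega_T)}$, leave $\phi_t$ merely in $L^2(\Omega_T)$, and a function that is only $L^2$ in the space variable has no trace on $\Gamma$. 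Hence the boundary pairing $\int_0^T\int_\Gamma(l+\chi_{\Gamma_2}p)\phi_t\,dS\,dt$ in your representation formula is not even well defined, let alone estimable; no maximal-regularity upgrade is available because the coefficient $a=(d_LL+d_PP)/(L+P)$ is only measurable and bounded. Worse, uniformly in $\xi$ you control $l+\chi_{\Gamma_2}p$ only in $L^\infty(0,T;L^1)$ (the $L^2(\Gamma_T)$-bound on $l$ is part of the conclusion), so even a well-defined trace of $\phi_t$ would have to lie in $L^1(0,T;L^\infty(\Gamma))$ --- far beyond energy regularity. You explicitly flag this pairing as ``the genuinely delicate part'' and leave it open; but it is exactly the point where the lemma must be proved, so the proposal does not constitute a proof.

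The paper closes this step by choosing the dual function concretely rather than solving a dual problem: it integrates the equation for $Z=L+P$ in time and tests with $W(t)=d_LL(t)+d_PP(t)$ itself, i.e.\ it works with $\phi(t,x)=\int_0^tW(s,x)\,ds$, so that $\phi_t=W$ has an honest $H^1(\Omega)$ trace. The pay-off is a sign: after integrating in time, the dangerous boundary pairing appears as $\int_{\Gamma_T}Wl\,+\int_{\Gamma_{2,T}}Wp$ on the favourable side of the identity and is nonnegative (by nonnegativity of solutions), so it is dropped rather than estimated. What remains are the initial-data terms $\int_\Gamma l_0\,\phi(T)\,dS$ and $\int_{\Gamma_2}p_0\,\phi(T)\,dS$, which are controlled by the trace inequality applied to $\phi(T)\in H^1(\Omega)$ together with $\|\phi(T)\|_\Omega\le\sqrt T\,\|W\|_{L^2(\Omega_T)}$; the gradient contribution is absorbed into the term $\frac12\|\nabla\phi(T)\|_\Omega^2$ generated by the time integration, and $W\le\max\{d_L,d_P\}Z$ turns the resulting bound on $\int_{\Omega_T}WZ$ into $\|W\|_{L^2(\Omega_T)}\le C_T$. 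If you insist on a genuine dual-problem formulation, the viable variant is the one in the paper's Appendix (Lemma \ref{compactness}): there the boundary datum is paired with the dual solution itself, not with its time derivative, using H\"older estimates that require $\theta\in L^p$ with $p>N/2+1$; but that route only yields $L^r$-bounds with $r<(N+2)/N<2$ and compactness, not the $L^2$ estimate \eqref{g1}.
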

         \begin{proof}
         We will use the duality method in \cite{BP} with suitable modifications to adapt to the case of volume-surface coupling. 
         By setting $Z = L + P$ and $W = d_LL + d_PP$, we get from the non-negativity of $L$ and $P$ that
         \begin{equation*}
         0< \min\{d_L,d_P\} \leq \frac{W}{Z} \leq \max\{d_L, d_P\} < +\infty.
         \end{equation*}
         It follows from \eqref{newf1}--\eqref{newf2} that
         \begin{equation}
         \begin{cases}
         Z_t - \Delta W = 0,&\qquad x\in\Omega,\quad t>0,\\
         \frac{\partial W}{\partial \nu} = -\lambda L + \gamma l + \chi_{\Gamma_2}\xi p,&\qquad x\in\Gamma,\quad t>0.
         \end{cases}
         \label{g2}
         \end{equation}
         We integrate the first equation in \eqref{g2} over $(0,t)$ and take then the inner product with $W(t)$ in $L^2(\Omega)$ to get
         \begin{multline}
         \int_{\Omega}W(t)[Z(t) - Z(0)]\,dx + \int_{\Omega}\nabla W(t)\cdot \nabla \int_{0}^{t}W(s)\,dsdx\\
         = \int_{\Gamma}W(t)\int_{0}^{t}(-\lambda L + \gamma l + \chi_{\Gamma_2}\xi p)\,dsdS.
         \label{g3}
         \end{multline}
         From \eqref{newf3} we have $-\lambda L + \gamma l + \chi_{\Gamma_2}\xi p = - (l + \chi_{\Gamma_2}p)_t$, thus it follows from \eqref{g3} that
         \begin{multline}
         \int_{\Omega}W(t)[Z(t)-Z(0)]\,dx + \int_{\Omega}\nabla W(t)\cdot \nabla\int_{0}^tW(s)\,dsdx\\
         = -\int_{\Gamma}W(t)[l(t)-l(0)]\,dS - \int_{\Gamma_2}W(t)[p(t)-p(0)]\,dS.
         \label{g7}
         \end{multline}
         
         In the following, we shall denote by $\phi(t,x) := \int_0^tW(s,x)ds$, which implies $\partial_t\phi(t) = W(t)$. Therefore, we calculate  
         \begin{equation*}
         \int_{\Omega}\nabla W(t)\cdot\nabla \int_0^tW(s)\,dsdx = 
         \int_{\Omega}\nabla \partial_t\phi(t)\cdot\nabla \phi(t)\,dx
         = \frac{1}{2}\int_{\Omega}\frac{\partial}{\partial t}|\nabla \phi(t)|^2\,dx.
         \end{equation*}
         As a consequence, integration of \eqref{g7} in $t$ over $(0,T)$ yields
         \begin{multline}
         \int_{\Omega_T}WZ\,dsdx + \int_{\Gamma_T}Wl\,dsdS + \int_{\Gamma_{2,T}}Wp\,dsdS +\frac{1}{2}\int_{\Omega}\bigl|\nabla\phi(T)\bigr|^2\,dx\\
         = \int_{\Omega_T}WZ(0)\,dsdx + \int_{\Gamma_T}Wl(0)\,dsdS + \int_{\Gamma_{2,T}}Wp(0)\,dsdS,
         \label{g8}
         \end{multline}
         where we have used that
         $
         \int_0^T\int_{\Omega}\frac{\partial}{\partial t}|\nabla \phi(t)|^2\,dxdt  
         = \int_{\Omega}\bigl|\nabla\phi(T)\bigr|^2dx
         $.
         
         Next, by Young's inequality, we have
         \begin{equation}\label{star}
         \|\phi(T)\|_{\Omega}^2 = \int_{\Omega}\biggl|\int_{0}^{T}W(s,x)ds\biggr|^2dx \leq T\int_{\Omega}\int_{0}^{T}|W(s,x)|^2\,dsdx = T\|W\|_{L^2(\Omega_T)}^2.
         \end{equation}
         Considering the right hand side of \eqref{g8}, we estimate in the following by Cauchy's, Young's, a Trace inequalities and \eqref{star} that
         \begin{equation}
         \int_{\Omega_T}WZ(0)\,dsdx \leq \sqrt{T}\|W\|_{L^2(\Omega_T)}\|Z(0)\|_{\Omega},
         \label{g9}
         \end{equation}
         and
         \begin{equation}
         \begin{aligned}
         \int_{\Gamma_T}Wl(0)\,dsdS&= (\phi(T),l(0))_{\Gamma}\leq \|l(0)\|_{\Gamma}\|\phi(T)\|_{\Gamma} \\
         &\leq \|l(0)\|_{\Gamma}(C\|\nabla \phi(T)\|_{\Omega} + \|\phi(T)\|_{\Omega})\\
         &\leq  \frac{1}{8}\|\nabla \phi(T)\|_{\Omega}^2 + C\|l(0)\|_{\Gamma}^2+\sqrt{T}\|l(0)\|_{\Gamma}\|W\|_{L^2(\Omega_T)},
         \end{aligned}
         \label{g10}
         \end{equation}
         and similarly,
         \begin{equation}
         \int_{\Gamma_{2,T}}Wp(0)\,dsdS \leq \frac{1}{8}\|\nabla \phi(T)\|_{\Omega}^2 + C\|p(0)\|_{\Gamma_2}^2+ \sqrt{T}\|p(0)\|_{\Gamma_2}\|W\|_{L^2(\Omega_T)}.
         \label{g11}
         \end{equation}
         
         Hence, by the non-negativity of $L, P, l, p$, we obtain from \eqref{g8}--\eqref{g11}
         \begin{equation}
         \int_{\Omega_T}WZ\,dsdx 
         \leq C\sqrt{T}\|W\|_{L^2(\Omega_T)} + C.
         \label{g12}
         \end{equation}
         It follows then from $W\le \max\{d_L,d_P\}Z$ and \eqref{g12} that
         \begin{equation}
         \|W\|_{L^2(\Omega_T)}\leq C_T.
         \label{g14}
         \end{equation}
         Therefore, by the non-negativity of $L$ and $P$ and by keeping in mind that $W = d_LL + d_PP$, we conclude that
         \begin{equation}
         \|L\|_{L^2(\Omega_T)} + \|P\|_{L^2(\Omega_T)} \leq C_T.
         \label{g15}
         \end{equation}
         
         Next,  by testing \eqref{newf1} with $L$, we estimate
         \begin{equation}
         \begin{aligned}
         \frac{1}{2}\frac{d}{dt}\|L\|_{\Omega}^2 + d_L\|\nabla L\|_{\Omega}^2 &= -\beta\|L\|_{\Omega}^2 + \alpha\int_{\Omega}LP\,dx - \lambda\|L\|_{\Gamma}^2 + \gamma\int_{\Gamma}Ll\,dS\\
         &\leq C\|P\|_{\Omega}^2 + C\|l\|_{\Gamma}^2 - \frac{\lambda}{2}\|L\|_{\Gamma}^2.
         \end{aligned}
         \label{g16}
         \end{equation}
         On the other hand, we get from \eqref{newf3} 
         \begin{equation}
         \frac{1}{2}\frac{d}{dt}\|l\|_{\Gamma}^2 = -\gamma\|l\|_{\Gamma}^2 - \sigma\|l\|_{\Gamma_2}^2 + \lambda\int_{\Gamma}Ll\,dS
         \leq C\|l\|_{\Gamma}^2 + \frac{\lambda}{2}\|L\|_{\Gamma}^2.
         \label{g17}
         \end{equation}
         Summing \eqref{g16} and \eqref{g17} yields
         \begin{equation}
         \frac{d}{dt}(\|L\|_{\Omega}^{2}+\|l\|_{\Gamma}^2) + 2d_L\|\nabla L\|_{\Omega}^2 \leq C\|P\|_{\Omega}^2 + C \|l\|_{\Gamma}^2.
         \label{g18}
         \end{equation}
         Therefore, by $\|L\|_{L^2(\Omega_T)}+\|P\|_{L^2(\Omega_T)}\leq C_T$, we conclude that
%
         \begin{equation}\label{lCT}
         \|\nabla L\|_{L^2(\Omega_T)} \leq C_T \quad \text{ and } \quad \|l\|_{L^2(\Gamma_T)}\leq C_T\, e^{CT} \le C_T.
         \end{equation}
         This finishes the proof.
         \end{proof}
         \medskip
         
         \begin{remark}\label{remark:QSSAsurfacediffusion}
         The proof applied in Lemma \ref{lem1} fails when trying to include one of the surface diffusion terms $\Delta_\Gamma l$ or $\Delta_{\Gamma_2}p$ because in these cases, the formulation \eqref{g8} would have additional terms $\int_{\Gamma}\nabla_{\Gamma}W(t)\cdot\nabla \int_{0}^tl(s)\,dsdS$ or $\int_{\Gamma_2}\nabla_{\Gamma_2}W(t)\cdot\nabla_{\Gamma_2}\int_{0}^{t}p(s)\,dsdS$, for which we do not know a sign or suitable {\it a priori} estimates. The problem of the QSSA for system 
         \eqref{f1}--\eqref{f3} with surface diffusion remains open for future work.
         \end{remark}   
         
         \medskip
         
         From now on, we always denote  the solution to \eqref{newf1}--\eqref{newf3} by $(L^{\xi}, P^{\xi}, l^{\xi}, p^{\xi})$ in order to emphasise the dependency on the reaction rate $\xi$.
         \medskip
         
         In the next Lemma, we will show that the concentration $p^{\xi}$ of the phosphorylated Lgl on the active boundary $\Gamma_2$ tends to zero as $\xi \rightarrow +\infty$ in $L^2(\Gamma_{2,T})$. Since $p^{\xi}(0) = p_0 \in L^2(\Gamma_2)$, we cannot expect that $p^{\xi} \rightarrow 0$ in $C([0,T]; L^2(\Gamma_2))$. Nevertheless, we will be able to show that $p^{\xi} \rightarrow 0$ in $C([t_0,T]; L^2(\Gamma_2))$ for all $t_0>0$.
         
         \medskip
         
         \begin{lemma}\label{lem3}
         For al $T>0$ and $T> t_0>0$, we have 
         $$
         p^{\xi} \xrightarrow{\xi \rightarrow +\infty} 0 \quad\text{ in }\quad L^2(\Gamma_{2,T})\cap C([t_0,T]; L^2(\Gamma_2)).
         $$
         \end{lemma}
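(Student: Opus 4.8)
The plan is to exploit that, in the absence of surface diffusion, the equation for $p^{\xi}$ in \eqref{newf3} is a pure reaction equation whose only coupling to the rest of the system enters through the source term $\sigma l^{\xi}$. The decisive input is the $\xi$-uniform bound $\|l^{\xi}\|_{L^2(\Gamma_{2,T})}\le \|l^{\xi}\|_{L^2(\Gamma_T)}\le C_T$ furnished by Lemma \ref{lem1}, which lets us treat $\sigma l^{\xi}$ as a forcing term of controlled size while the dissipation coefficient $\xi$ tends to infinity. Concretely, I would test the equation $p^{\xi}_t=\sigma l^{\xi}-\xi p^{\xi}$ with $p^{\xi}$ in $L^2(\Gamma_2)$ — which is justified since $p^{\xi}\in C([0,T];L^2(\Gamma_2))$ and, from the equation itself, $p^{\xi}_t\in L^2(\Gamma_{2,T})$ — and absorb the cross term via $\sigma(l^{\xi},p^{\xi})_{\Gamma_2}\le \tfrac{\sigma^2}{2\xi}\|l^{\xi}\|_{\Gamma_2}^2+\tfrac{\xi}{2}\|p^{\xi}\|_{\Gamma_2}^2$ to obtain the differential inequality
\begin{equation*}
\frac{d}{dt}\|p^{\xi}\|_{\Gamma_2}^2+\xi\,\|p^{\xi}\|_{\Gamma_2}^2\le \frac{\sigma^2}{\xi}\,\|l^{\xi}\|_{\Gamma_2}^2 .
\end{equation*}

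For the convergence in $L^2(\Gamma_{2,T})$ I would integrate this inequality over $(0,T)$, discard the nonnegative endpoint term $\|p^{\xi}(T)\|_{\Gamma_2}^2$, and insert the uniform bound on $l^{\xi}$, which yields $\xi\,\|p^{\xi}\|_{L^2(\Gamma_{2,T})}^2\le \|p_0\|_{\Gamma_2}^2+\tfrac{\sigma^2}{\xi}\|l^{\xi}\|_{L^2(\Gamma_{2,T})}^2\le \|p_0\|_{\Gamma_2}^2+\tfrac{\sigma^2 C_T^2}{\xi}$; hence $\|p^{\xi}\|_{L^2(\Gamma_{2,T})}^2=O(1/\xi)\to 0$.

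For the convergence in $C([t_0,T];L^2(\Gamma_2))$ I would instead integrate the inequality with the integrating factor $e^{\xi t}$, which gives for every $t$
\begin{equation*}
\|p^{\xi}(t)\|_{\Gamma_2}^2\le e^{-\xi t}\,\|p_0\|_{\Gamma_2}^2+\frac{\sigma^2}{\xi}\int_0^t e^{-\xi(t-s)}\,\|l^{\xi}(s)\|_{\Gamma_2}^2\,ds .
\end{equation*}
Bounding the convolution crudely by $e^{-\xi(t-s)}\le 1$ and using the uniform $L^2$ bound controls the second term by $\tfrac{\sigma^2 C_T^2}{\xi}$, \emph{independently of} $t$, while for $t\ge t_0$ the first term satisfies $e^{-\xi t}\|p_0\|_{\Gamma_2}^2\le e^{-\xi t_0}\|p_0\|_{\Gamma_2}^2$. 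Taking the supremum over $t\in[t_0,T]$ and letting $\xi\to+\infty$ then yields $\sup_{t\in[t_0,T]}\|p^{\xi}(t)\|_{\Gamma_2}\to 0$.

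The only genuine subtlety — and the reason the uniform convergence must be restricted to $[t_0,T]$ rather than to $[0,T]$ — is the initial-layer term $e^{-\xi t}\|p_0\|_{\Gamma_2}^2$: since in general $p_0\neq 0$, this contribution is only small away from $t=0$, which is precisely the instantaneous transfer of the initial cortical mass $p_0$ into the cytoplasm that produces the initial layer discussed before \eqref{ff1}. Everything else is routine, and the essential quantitative ingredient throughout is the $\xi$-independence of the constant $C_T$ provided by Lemma \ref{lem1}.
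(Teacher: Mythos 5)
Your proof is correct and follows essentially the same approach as the paper's: an energy estimate for the ODE of $p^{\xi}$ (the paper tests with $\xi p^{\xi}$, you with $p^{\xi}$ --- the resulting differential inequalities agree up to a factor of $\xi$), integration over $(0,T)$ for the $L^2(\Gamma_{2,T})$ decay, and an integrating-factor (Duhamel) bound for the uniform convergence on $[t_0,T]$, with the $\xi$-uniform bound on $l^{\xi}$ from Lemma \ref{lem1} as the key input in both arguments. The only (harmless) deviation is in the second part: the paper estimates the Duhamel term by Cauchy--Schwarz and invokes the already-proved $L^2(\Gamma_{2,T})$ convergence of $p^{\xi}$, whereas you absorb the cross term by Young's inequality and use only the bound on $l^{\xi}$, which makes that step self-contained and even yields the slightly sharper uniform rate $O(1/\xi)+e^{-\xi t_0}\|p_0\|_{\Gamma_2}^2$.
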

         \begin{proof}
         By multiplying the equation \eqref{newf3} of $p^{\xi}$, i.e.
         $
         \partial_tp^{\xi} + \xi p^{\xi} = \sigma l^{\xi}
         $
         with $\xi p^{\xi}$ and integrating over $\Gamma_2$, we estimate
         \begin{equation*}
         \frac{\xi}{2}\frac{d}{dt}\|p\|_{\Gamma_2}^2 + \xi^2\|p^{\xi}\|_{\Gamma_2}^2 = (\sigma l^{\xi}, \xi p^{\xi})_{\Gamma_2} \leq \frac{\sigma^2}{2}\|l^{\xi}\|_{\Gamma_2}^2 + \frac{\xi^2}{2}\|p^{\xi}\|_{\Gamma_2}^2.
         \end{equation*}
         Therefore,
         \begin{equation*}
         \xi\frac{d}{dt}\|p^{\xi}\|_{\Gamma_2}^2 + \xi^2\|p^{\xi}\|_{\Gamma_2}^2 \leq \sigma^2\|l^{\xi}\|_{\Gamma_2}^2 \leq \sigma^2\|l^{\xi}\|_{\Gamma}^2,
         \end{equation*}
         and integration over $(0,T)$ yields
         \begin{equation}\label{xirate}
         \frac{1}{\xi}\|p^{\xi}(T)\|_{\Gamma_2}^2 + \int_{0}^{T}\|p^{\xi}(s)\|_{\Gamma_2}^2\,ds \leq \frac{1}{\xi}\|p_0\|_{\Gamma_2}^2 + \frac{\sigma^2}{\xi^2}\int_{0}^{T}\|l^{\xi}(s)\|_{\Gamma}^2\,ds.
         \end{equation}
         This implies that $\|p^{\xi}\|_{\Gamma_{2,T}}^2=O(\xi^{-1})$ and $p^{\xi} \rightarrow 0$ in $L^2(\Gamma_{2,T})$ as $\xi\rightarrow +\infty$ since $\{l^{\xi}\}_{\xi >0}$ is uniformly bounded in $L^2(\Gamma_T)$ according to Lemma \ref{lem1}.
         
         From \eqref{newf3}, we get
         \begin{equation*}
         \frac{d}{dt}\|p^{\xi}\|_{\Gamma_2}^2 + 2\xi\|p^{\xi}\|_{\Gamma_2}^2 = 2\sigma(l^{\xi}, p^{\xi})_{\Gamma_2}.
         \end{equation*}
         Hence, for any fixed $0<t_0\le t \le T$, we have 
         \begin{align*}
         \|p^{\xi}(t)\|_{\Gamma_2}^2 &\leq e^{-2\xi t}\|p_0\|_{\Gamma_2}^2 + 2\sigma e^{-2\xi t}\int_{0}^{t}e^{2\xi s}(l^{\xi}, p^{\xi})_{\Gamma_2}ds\\
         &\leq e^{-2\xi t_0}\|p_0\|_{\Gamma_2}^2  + 2\sigma\|l^{\xi}\|_{L^2(\Gamma_T)}\|p^{\xi}\|_{L^2(\Gamma_{2,T})}.
         \end{align*}
         In the limit $\xi \rightarrow +\infty$ and by using $\|p^{\xi}\|_{\Gamma_{2,T}} \rightarrow 0$ and $\{l^{\xi}\}_{\xi>0}$ is bounded in $L^2(\Gamma_T)$, we have thus $p^{\xi} \rightarrow 0$ in $C([t_0,T];L^2(\Gamma_2))$ for all $t_0>0$.
         \end{proof}
         
         \medskip
         
         \begin{lemma}\label{lem:limits} 
         There exists $L \in L^{2}(\Omega_T)$ and $l\in L^2(\Gamma_T)$ such that, when $\xi \rightarrow +\infty$
         \begin{equation}
         L^{\xi} \xrightarrow{\xi \rightarrow +\infty} L \quad\text{ in }\quad L^2(\Omega_T)\qquad \text{ and } \qquad  l^{\xi} \xrightarrow{\xi \rightarrow +\infty} l \quad\text{ in }\quad L^2(\Gamma_T). 
         \label{ff12}
         \end{equation}
         \end{lemma}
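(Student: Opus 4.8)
The plan is to obtain the two strong convergences by a compactness argument resting on the uniform bounds of Lemma \ref{lem1}: the convergence of $L^{\xi}$ via a standard Aubin--Lions argument, and the convergence of $l^{\xi}$ by exploiting that, in the absence of surface diffusion, $l^{\xi}$ is recovered from the trace of $L^{\xi}$ through a linear ODE in time. From \eqref{g1} and \eqref{g1a} we already know that $\{L^{\xi}\}$ is bounded in $L^2(0,T;H^1(\Omega))$ and $\{l^{\xi}\}$ in $L^2(\Gamma_T)$, uniformly in $\xi$; the only missing ingredient for compactness of $L^{\xi}$ is a uniform bound on its time derivative.

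First I would test the weak formulation of \eqref{newf1} with an arbitrary $\varphi\in H^1(\Omega)$ to obtain
\[
\langle \partial_t L^{\xi},\varphi\rangle = -d_L(\nabla L^{\xi},\nabla\varphi)_{\Omega} - (\beta L^{\xi}-\alpha P^{\xi},\varphi)_{\Omega} - (\lambda L^{\xi}-\gamma l^{\xi},\varphi)_{\Gamma}.
\]
Using the trace inequality $\|\varphi\|_{\Gamma}\le C\|\varphi\|_{H^1(\Omega)}$ together with $\|L^{\xi}\|_{\Gamma}\le C\|L^{\xi}\|_{H^1(\Omega)}$, this yields the pointwise-in-time bound $\|\partial_t L^{\xi}\|_{(H^1(\Omega))^*}\le C(\|L^{\xi}\|_{H^1(\Omega)}+\|P^{\xi}\|_{\Omega}+\|l^{\xi}\|_{\Gamma})$. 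Squaring, integrating over $(0,T)$ and invoking \eqref{g1}--\eqref{g1a} gives $\|\partial_t L^{\xi}\|_{L^2(0,T;(H^1(\Omega))^*)}\le C_T$, uniformly in $\xi$. Since $H^1(\Omega)\hookrightarrow\hookrightarrow H^s(\Omega)\hookrightarrow (H^1(\Omega))^*$ for any $s\in(1/2,1)$ by Rellich, the Aubin--Lions--Simon lemma gives relative compactness of $\{L^{\xi}\}$ in $L^2(0,T;H^s(\Omega))$. Extracting a subsequence (not relabelled), we get $L^{\xi}\to L$ in $L^2(0,T;H^s(\Omega))$, and in particular the desired convergence $L^{\xi}\to L$ in $L^2(\Omega_T)$.

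The point of choosing $s>1/2$ is that the trace operator $\gamma_0:H^s(\Omega)\to H^{s-1/2}(\Gamma)$ is then bounded, so strong convergence in $L^2(0,T;H^s(\Omega))$ propagates to the traces, giving $L^{\xi}|_{\Gamma}\to L|_{\Gamma}$ in $L^2(\Gamma_T)$. The crucial structural observation is then that the $l$-equation in \eqref{newf3} carries no explicit $\xi$ and no surface diffusion, hence it is solved pointwise in $x$ by Duhamel's formula
\[
l^{\xi}(t,x)=e^{-a(x)t}l_0(x)+\lambda\int_0^t e^{-a(x)(t-s)}L^{\xi}(s,x)\,ds,\qquad a(x):=\gamma+\chi_{\Gamma_2}(x)\sigma.
\]
Because $0<\gamma\le a(x)\le\gamma+\sigma$, the affine solution map $L^{\xi}|_{\Gamma}\mapsto l^{\xi}$ is bounded and continuous on $L^2(\Gamma_T)$ (Young's convolution inequality controls the integral term with constant $\lambda/\gamma$), so the trace convergence transfers to $l^{\xi}\to l$ in $L^2(\Gamma_T)$, with $l$ given by the same formula with $L|_{\Gamma}$ replacing $L^{\xi}|_{\Gamma}$.

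The main obstacle, and the reason the argument must run through the intermediate space $H^s(\Omega)$ with $s>1/2$ rather than through $L^2(\Omega_T)$ alone, is precisely the recovery of \emph{strong} trace convergence: the uniform $L^2(0,T;H^1)$ bound by itself only controls the traces in $L^2(\Gamma_T)$ without compactness, and no independent compactness for $l^{\xi}$ is available since $d_l=0$. Upgrading the $L^2(\Omega_T)$ compactness to $L^2(0,T;H^s)$ compactness through the time-derivative estimate is exactly what lets the trace pass to the limit. Finally, since the limit system \eqref{ff1}--\eqref{ff3} will be shown to admit a unique solution, the limits $L$ and $l$ are independent of the extracted subsequence, so the whole family converges as $\xi\to+\infty$.
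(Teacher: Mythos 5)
Your proposal is correct, and for the volume variable it follows essentially the paper's path: both arguments take the uniform bounds \eqref{g1}--\eqref{g1a} of Lemma \ref{lem1}, derive from \eqref{newf1} a uniform bound on $\partial_t L^{\xi}$ in a negative space, and apply Aubin--Lions (the paper with the triple $H^1(\Omega)\hookrightarrow\hookrightarrow L^2(\Omega)\hookrightarrow H^{-1}(\Omega)$, you with $H^1(\Omega)\hookrightarrow\hookrightarrow H^s(\Omega)\hookrightarrow (H^1(\Omega))^*$, $s\in(1/2,1)$). Where you genuinely diverge is the treatment of $l^{\xi}$. The paper runs a \emph{second} Aubin--Lions argument on the boundary: the trace theorem gives $\{L^{\xi}|_{\Gamma}\}$ bounded in $L^2(0,T;H^{1/2}(\Gamma))$, the ODE \eqref{newf3} is claimed to propagate this spatial regularity to $l^{\xi}$ while also bounding $\{l^{\xi}_t\}$ in $L^2(\Gamma_T)$, and compactness of $l^{\xi}$ in $L^2(\Gamma_T)$ follows. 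You instead strengthen the compactness of $L^{\xi}$ to $L^2(0,T;H^s(\Omega))$ with $s>1/2$, so that boundedness of the trace operator $H^s(\Omega)\to H^{s-1/2}(\Gamma)$ converts it into \emph{strong} convergence of $L^{\xi}|_{\Gamma}$ in $L^2(\Gamma_T)$, and then push this through the Duhamel representation of the $\xi$-independent ODE, whose solution map $L^{\xi}|_{\Gamma}\mapsto l^{\xi}$ is affine and Lipschitz on $L^2(\Gamma_T)$ with constant $\lambda/\gamma$. The trade-off: your route needs the slightly finer compactness statement for $L^{\xi}$, but in exchange it requires no compactness and no spatial regularity of $l^{\xi}$ at all. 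This is a real advantage, because the paper's assertion that $l^{\xi}$ is bounded in $L^2(0,T;H^{1/2}(\Gamma))$ is delicate: the initial datum $l_0$ is only in $L^2(\Gamma)$, and the discontinuous coefficient $\gamma+\sigma\chi_{\Gamma_2}$ enters Duhamel's formula through the factor $e^{-(\gamma+\sigma\chi_{\Gamma_2}(x))t}$, which is not a multiplier on $H^{1/2}(\Gamma)$; making that step rigorous would require, for instance, working in $H^{s}(\Gamma)$ with $s<1/2$ after splitting off the initial layer. Your argument sidesteps this issue entirely, and your closing appeal to uniqueness of the limit problem to remove the subsequence extraction is consistent with how the paper itself organizes Theorem \ref{QSSA}.
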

         \begin{proof}
         By Lemma \ref{lem1}, we have that $\{L^{\xi}\}_{\xi >0}$ is bounded in $L^2(0,T;H^1(\Omega))$. Thus, by using \eqref{newf1}, we have $\{\partial_tL^{\xi}\}_{\xi >0}$ is bounded in $L^2(0,T;H^{-1}(\Omega))$ and the Aubin-Lions compactness lemma implies that $\{L^{\xi}\}_{\xi >0}$ is precompact in $L^2(\Omega_T)$. Thus, 
         \begin{equation*}
         L^{\xi} \xrightarrow{\xi \rightarrow +\infty} L \quad\text{ in }\quad L^{2}(\Omega_T)
         \end{equation*}
         for some $L\in L^2(\Omega_T)$ and up to a subsequence. By using that $\{L^{\xi}\}_{\xi >0}$ is bounded in $L^2(0,T;H^1(\Omega))$ and by a standard Trace Theorem (see e.g. \cite{GT}), we have $\{L^{\xi}|_{\Gamma}\}_{\xi >0}$ is also bounded in $L^2(0,T;H^{1/2}(\Gamma))$. Therefore, it follows from $l^{\xi}_t = \lambda L^{\xi} - (\gamma+\sigma\chi_{\Gamma_2})l^{\xi}$
         that $\{l^{\xi}\}_{\xi >0}$ is bounded in $L^2(0,T;H^{1/2}(\Gamma))$ and $\{l^{\xi}_t\}_{\xi >0}$ is bounded in $L^2(\Gamma_T)$. Using again the Aubin-Lions compactness lemma, we have
         \begin{equation*}
         l^{\xi} \xrightarrow{\xi \rightarrow +\infty} l \quad\text{ in }\quad L^2(\Gamma_T)
         \end{equation*}
         for some $l \in L^2(\Gamma_T)$ and up to a subsequence.
         \end{proof}
         \medskip
         
         By Lemma \ref{lem:limits}, we have so far established the strong convergence of $L^{\xi}, l^{\xi}$ and $p^{\xi}$ in $L^2(\Omega_T)$, $L^2(\Gamma_T)$ and $L^2(\Gamma_{2,T})$, respectively.
         
         The convergence of $P^{\xi}$ constitutes a more difficult problem due to the singularity of the boundary flux $d_P\partial P^{\xi}/\partial\nu = \chi_{\Gamma_2}\xi p^{\xi}$. 
         As an example to illustrate the, we may attempt a similar approach like in
         Lemma \ref{lem1}, which succeeded in proving the bound \eqref{g1a}: 
         By testing
         \begin{equation*}\label{PPP}
         	P^{\xi}_t - d_P\Delta P^{\xi} = \beta L^{\xi} - \alpha P^{\xi}, \qquad 
         	d_P\frac{\partial P^{\xi}}{\partial\nu} = \chi_{\Gamma_2}\xi p^{\xi},
         \end{equation*}
         with $P^{\xi}$, we get after direct computations that
         \begin{equation}\label{PP}
         d_P\int_{0}^{T}\|\nabla P^{\xi}\|_{\Omega}^2\,ds \leq \|P_0\|_{\Omega}^2 + \int_{0}^{T}(\beta L^{\xi} - \alpha P^{\xi},P^{\xi})_{\Omega}\,ds + \xi\int_{0}^{T}(P^{\xi},p^{\xi})_{\Gamma_2}\,ds.
         \end{equation}
         Due to the boundedness of $L^{\xi}, P^{\xi}$ in $L^2(\Omega_T)$, we would need the uniform boundedness of $\xi\int_{0}^{T}(P^{\xi},p^{\xi})_{\Gamma_2}ds$
         in order to prove a uniform control of the left hand side of \eqref{PP}.
         A uniform bound on Neumann data of \eqref{PPP} seems thus to requires a uniform bound of $\xi\|p^{\xi}\|_{L^2(\Gamma_{2,T})}$ or equivalently $\|p^{\xi}\|_{L^2(\Gamma_{2,T})} \rightarrow 0$ when $\xi\rightarrow+\infty$ with the rate $1/\xi$.  However, Lemma \ref{lem3} implies only the decay rate of $\|p^{\xi}\|_{\Gamma_{2,T}}=O(1/\sqrt{\xi})$.
         
         Nevertheless, we notice that \eqref{newf3} implies
         \begin{equation}
         \begin{aligned}
         \xi \| p^{\xi}\|_{L^1(\Gamma_{2,T})} &= \int_{0}^{T}\int_{\Gamma_2}\xi p^{\xi}(t)\,dSdt= \int_{0}^{T}\int_{\Gamma_2}(\sigma l^{\xi} - \partial_tp^{\xi})\,dSdt\\
         &\leq \|p_0\|_{L^1(\Gamma_2)}  + C\sigma\|l^{\xi}\|_{L^2(\Gamma_T)}
         \le C_T.
         \end{aligned}\label{pl1}
         \end{equation}
         and the uniform $L^1$-bound \eqref{pl1} will be used in Lemma \ref{Pcompact} below
         to obtain the compactness of $\{P^{\xi}\}_{\xi >0}$ in $L^1(\Omega_T)$ and even in $L^1(0,T;W^{1,1}(\Omega))$. 
         The proof of Lemma \ref{Pcompact} is based on Lemma \ref{compactness}, which is similar to results given in \cite{PP} and \cite{BP}, yet for homogeneous boundary conditions. The proof of Lemma \ref{compactness} is based on a duality argument and will given in the Appendix of the sake of completeness.
         \medskip

         \medskip
         
         \begin{lemma}\label{compactness}
         The mapping $\mathfrak T: (w_0, \Theta, g)\rightarrow (w, \nabla w)$, where $w$ is the solution of
         \begin{equation}\label{dual}
         \begin{cases}
         w_t - d_P\Delta w = \Theta, &\qquad x\in\Omega,\quad t>0,\\
         d_P\partial w/\partial\nu = g, &\qquad x\in\Gamma,\quad t>0,\\
         w(0,\cdot) = w_0, &\qquad x\in\Omega,
         \end{cases}
         \end{equation}
         is compact from $L^1(\Omega)\times L^1(\Omega_T)\times L^1(\Gamma_T)$ into $L^1(\Omega_T)\times (L^1(\Omega_T))^N$.
         \end{lemma}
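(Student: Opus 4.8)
The plan is to establish compactness of $\mathfrak{T}$ by a duality argument, exactly as suggested by the references \cite{PP} and \cite{BP}: I would compute the Banach-space adjoint $\mathfrak{T}^{*}$ and invoke Schauder's theorem, which states that a bounded linear operator between Banach spaces is compact if and only if its adjoint is compact. Since the target $L^{1}(\Omega_T)\times(L^{1}(\Omega_T))^{N}$ has dual $L^{\infty}(\Omega_T)\times(L^{\infty}(\Omega_T))^{N}$ and the source $L^{1}(\Omega)\times L^{1}(\Omega_T)\times L^{1}(\Gamma_T)$ has dual $L^{\infty}(\Omega)\times L^{\infty}(\Omega_T)\times L^{\infty}(\Gamma_T)$, it suffices to show that $\mathfrak{T}^{*}$ maps bounded subsets of $L^{\infty}(\Omega_T)\times(L^{\infty}(\Omega_T))^{N}$ into relatively compact subsets of $L^{\infty}(\Omega)\times L^{\infty}(\Omega_T)\times L^{\infty}(\Gamma_T)$.

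First I would identify $\mathfrak{T}^{*}$ explicitly. Given $(\psi_0,\vec\psi)$, introduce the backward dual problem
\[
\begin{cases}
-\zeta_t - d_P\Delta\zeta = \psi_0 - \operatorname{div}\vec\psi, & x\in\Omega,\ 0<t<T,\\
d_P\,\partial_\nu \zeta = \vec\psi\cdot\nu, & x\in\Gamma,\ 0<t<T,\\
\zeta(T,\cdot) = 0, & x\in\Omega,
\end{cases}
\]
which admits a unique weak solution $\zeta\in L^{2}(0,T;H^{1}(\Omega))\cap C([0,T];L^{2}(\Omega))$ by standard Lions theory (note $\vec\psi\in L^{\infty}\subset L^{2}$). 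Testing the forward equation \eqref{dual} against $\zeta$ and the dual equation against $w$, integrating by parts in space and time, and using that the lateral boundary contributions cancel precisely because of the conormal condition $d_P\partial_\nu\zeta = \vec\psi\cdot\nu$, I obtain the pairing identity
\begin{multline*}
\int_{\Omega_T} w\,\psi_0\,dx\,dt + \int_{\Omega_T}\nabla w\cdot\vec\psi\,dx\,dt \\
= \int_{\Omega}w_0\,\zeta(0,\cdot)\,dx + \int_{\Omega_T}\Theta\,\zeta\,dx\,dt + \int_{\Gamma_T}g\,\zeta\,dS\,dt,
\end{multline*}
which shows $\mathfrak{T}^{*}(\psi_0,\vec\psi) = \bigl(\zeta(0,\cdot),\,\zeta,\,\zeta|_{\Gamma_T}\bigr)$. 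Thus the lemma reduces to proving that, as $(\psi_0,\vec\psi)$ ranges over the unit ball of $L^{\infty}$, the three traces $\zeta(0,\cdot)$, $\zeta$ and $\zeta|_{\Gamma_T}$ form relatively compact families in $L^{\infty}(\Omega)$, $L^{\infty}(\Omega_T)$ and $L^{\infty}(\Gamma_T)$, respectively.

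The crucial structural observation is that the natural weak formulation of the dual problem carries \emph{no} boundary integral: for a.e. $t$ and all $\varphi\in H^{1}(\Omega)$,
\[
-\langle\zeta_t,\varphi\rangle + d_P\int_{\Omega}\nabla\zeta\cdot\nabla\varphi\,dx = \int_{\Omega}\psi_0\,\varphi\,dx + \int_{\Omega}\vec\psi\cdot\nabla\varphi\,dx,
\]
so $\zeta$ solves a constant-coefficient, divergence-form parabolic equation with right-hand side data $(\psi_0,\vec\psi)$ bounded in $L^{\infty}$ and zero terminal data. Reversing time turns this into a forward problem with zero initial data, and I would invoke the De Giorgi–Nash–Moser theory together with its up-to-the-boundary variant for the conormal problem (e.g. Ladyzhenskaya–Solonnikov–Ural'tseva, or Lieberman) to get a uniform Hölder bound $\|\zeta\|_{C^{\alpha}(\overline{\Omega}\times[0,T])}\le C\bigl(\|\psi_0\|_{L^{\infty}(\Omega_T)}+\|\vec\psi\|_{L^{\infty}(\Omega_T)}\bigr)$ for some $\alpha\in(0,1)$ and $C$ independent of the data; the boundary smoothness assumed in the paper ($\Gamma\in C^{2+\alpha}$) is more than sufficient. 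Because the terminal datum is zero and the time $t=0$ lies furthest from the prescribed data, no initial layer obstructs this estimate. The uniform $C^{\alpha}$ bound renders $\{\zeta(0,\cdot)\}$, $\{\zeta\}$ and $\{\zeta|_{\Gamma_T}\}$ equicontinuous and uniformly bounded, so Arzelà–Ascoli yields relative compactness in $C(\overline{\Omega})$, $C(\overline{\Omega}\times[0,T])$ and $C(\Gamma_T)$, hence in the respective $L^{\infty}$ spaces. Compactness of $\mathfrak{T}^{*}$ follows, and Schauder's theorem gives compactness of $\mathfrak{T}$.

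I expect the main obstacle to be the uniform Hölder estimate \emph{up to the lateral boundary} $\Gamma_T$ for the divergence-form equation with merely $L^{\infty}$ flux datum $\vec\psi$ and $L^{\infty}$ conormal datum $\vec\psi\cdot\nu$; this is precisely where the inhomogeneous-boundary character of our setting (absent from \cite{PP} and \cite{BP}) enters and where the boundary regularity machinery is needed. It is also exactly this point that allows the gradient component $\nabla w$ to be accommodated in the target space, since its dual variable $\vec\psi$ appears as the divergence-form forcing that the De Giorgi–Nash–Moser theory is designed to handle.
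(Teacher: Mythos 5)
Your proposal is correct and follows essentially the same duality argument as the paper's proof: both identify the adjoint of $\mathfrak{T}$ as the solution operator of a backward conormal-type problem, obtain a uniform H\"older bound for it from classical parabolic regularity (Ladyzhenskaya--Solonnikov--Ural'tseva / De Giorgi--Nash--Moser), and conclude compactness via Arzel\`a--Ascoli and Schauder duality. The only differences are technical: the paper takes $C_0^{\infty}$ dual data (so its dual problem has homogeneous Neumann conditions and no trace issue) and uses the finite-exponent estimate $\|z\|_{C^{\alpha}(\Omega_T)}\le \kappa\|(\Phi_0,\Phi)\|_{L^p\times (L^q)^N}$ with $p>N/2+1$, $q>N+2$, extending by density and then dualising into $L^r\times (L^s)^N$ with $r,s\ge 1$ --- a route which also \emph{defines} $\mathfrak{T}$ on $L^1$ data by transposition, a point your direct appeal to Schauder's theorem quietly presupposes, since $\mathfrak{T}$ must already be a bounded operator on the $L^1$ spaces before one can speak of its adjoint.
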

         
         \medskip
         
         Applying Lemma \ref{compactness} to $w = P^{\xi}$, $\Theta = \beta L^{\xi} - \alpha P^{\xi}$ and $g = \chi_{\Gamma_2}\xi p^{\xi}$ leads to
         \begin{lemma}\label{Pcompact}
         The sequence $\{P^{\xi}\}_{\xi>0}$ is pre-compact in $L^1(0,T;W^{1,1}(\Omega))$. In other words, there exists $P\in L^1(0,T;W^{1,1}(\Omega))$ such that up to a subsequence
         $$
         P^{\xi} \xrightarrow{\xi \rightarrow +\infty} P  \quad\text{ strongly in }\quad L^1(0,T;W^{1,1}(\Omega)) 
         \quad\text{ and weakly in }\quad L^2(\Omega_T).
         $$
         \end{lemma}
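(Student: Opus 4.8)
The plan is to obtain Lemma \ref{Pcompact} as a direct application of the abstract compactness result Lemma \ref{compactness}, taking $w = P^{\xi}$, $w_0 = P_0$, $\Theta = \beta L^{\xi} - \alpha P^{\xi}$ and $g = \chi_{\Gamma_2}\xi p^{\xi}$ so that the triple $(P^{\xi}_0, \Theta, g)$ solves exactly \eqref{dual}. For this it suffices to verify that these data remain bounded in $L^1(\Omega)\times L^1(\Omega_T)\times L^1(\Gamma_T)$ \emph{uniformly in} $\xi$, after which compactness of the map $\mathfrak T$ delivers the precompactness of $(P^{\xi},\nabla P^{\xi})$ in $L^1(\Omega_T)\times(L^1(\Omega_T))^N$, i.e. of $\{P^{\xi}\}$ in $L^1(0,T;W^{1,1}(\Omega))$.

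First I would check the three uniform bounds. The initial datum $P_0\in L^2(\Omega)\subset L^1(\Omega)$ is fixed and independent of $\xi$. The volume source $\beta L^{\xi}-\alpha P^{\xi}$ is bounded in $L^2(\Omega_T)$ uniformly in $\xi$ by Lemma \ref{lem1} (estimate \eqref{g1}), hence bounded in $L^1(\Omega_T)$ since $\Omega_T$ has finite measure. For the boundary flux, since $\chi_{\Gamma_2}$ is supported in $\Gamma_2$, the crucial $L^1$-bound \eqref{pl1} gives
$$\|\chi_{\Gamma_2}\xi p^{\xi}\|_{L^1(\Gamma_T)} = \xi\,\|p^{\xi}\|_{L^1(\Gamma_{2,T})} \le C_T,$$
again uniformly in $\xi$. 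With these three bounds established, Lemma \ref{compactness} applies and, extracting a subsequence, yields $P^{\xi}\to P$ strongly in $L^1(0,T;W^{1,1}(\Omega))$ for some $P\in L^1(0,T;W^{1,1}(\Omega))$.

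It then remains to upgrade to weak $L^2$ convergence and identify the limit. Here I would reuse the uniform bound $\|P^{\xi}\|_{L^2(\Omega_T)}\le C_T$ from \eqref{g1}: along a further subsequence $P^{\xi}\rightharpoonup \widetilde P$ weakly in $L^2(\Omega_T)$. Since strong convergence in $L^1(0,T;W^{1,1}(\Omega))$ forces a.e. convergence along a subsequence, the weak $L^2$ limit must coincide with the strong $L^1$ limit, so $\widetilde P = P$, completing the identification.

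The only genuinely delicate point is that the Neumann data $\chi_{\Gamma_2}\xi p^{\xi}$ is controlled \emph{only} in $L^1(\Gamma_T)$ and not in $L^2$; this is precisely why the duality/energy argument used for $L^{\xi}$ in Lemma \ref{lem1} cannot be repeated for $P^{\xi}$ (as the failed attempt around \eqref{PP} shows, one would need the unavailable rate $\|p^{\xi}\|_{L^2(\Gamma_{2,T})}=O(1/\xi)$), and why the $L^1$-based compactness of Lemma \ref{compactness}, fed by the $L^1$-estimate \eqref{pl1}, is the right tool. Once that uniform $L^1$-bound is in hand, no further difficulty arises and the statement follows.
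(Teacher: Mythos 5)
Your proposal is correct and follows exactly the paper's argument: the paper likewise obtains Lemma \ref{Pcompact} by applying Lemma \ref{compactness} with $w = P^{\xi}$, $\Theta = \beta L^{\xi} - \alpha P^{\xi}$, $g = \chi_{\Gamma_2}\xi p^{\xi}$, relying on the $L^2$-bounds of Lemma \ref{lem1} and the crucial uniform $L^1$-bound \eqref{pl1} for the boundary flux. Your write-up in fact spells out the verification of the uniform $L^1$-bounds on the data and the identification of the weak $L^2$ limit, details which the paper leaves implicit.
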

         
         
         The following Theorem is the main result of this section.
         
         \medskip
         
         \begin{theorem}[Convergence of the QSSA]\label{QSSA}\hfill\\
         For any $(L_0,P_0,l_0,p_0)\in L^2(\Omega)\times L^2(\Omega)\times L^2(\Gamma)\times L^2(\Gamma_2)$ and all $T>0$, $t_0>0$, we have
         \begin{equation*}
         L^{\xi} \xrightarrow{\xi \rightarrow +\infty} L  \quad\text{ strongly in }\quad 
         L^2(\Omega_T) 
         \quad\text{ and weakly in }\quad L^2(0,T;H^{1}(\Omega)),
         \end{equation*}
         \begin{equation*}
         P^{\xi} \xrightarrow{\xi \rightarrow +\infty} P  \quad\text{ strongly in }\quad L^1(0,T;W^{1,1}(\Omega)) 
         \quad\text{ and weakly in }\quad L^2(\Omega_T),
         \end{equation*}
         \begin{equation*}
         l^{\xi} \xrightarrow{\xi \rightarrow +\infty} l  \quad\text{ strongly in }\quad L^2(\Gamma_T), 
         \end{equation*}
         and
         \begin{equation*}
         p^{\xi} \xrightarrow{\xi \rightarrow +\infty} 0 \quad\text{ strongly in }\quad L^2({\Gamma_{2}}_T)\cap C([t_0,T];L^2(\Gamma_2)), \quad\forall\, 0<t_0<T,
         \end{equation*}
         up to a subsequence, where $(L,P,l)$ is the unique weak solution to \eqref{ff1}--\eqref{ff3}.
         \end{theorem}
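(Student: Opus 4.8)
The plan is to assemble the convergences already furnished by Lemmas \ref{lem:limits}, \ref{lem3} and \ref{Pcompact}, and then to identify the limit $(L,P,l)$ as a weak solution of the reduced system \eqref{ff1}--\eqref{ff3}. Lemma \ref{lem:limits} gives $L^\xi\to L$ strongly in $L^2(\Omega_T)$ and $l^\xi\to l$ strongly in $L^2(\Gamma_T)$; the uniform bound \eqref{g1a} upgrades this to $L^\xi\rightharpoonup L$ weakly in $L^2(0,T;H^1(\Omega))$ (and hence $L^\xi|_\Gamma\rightharpoonup L|_\Gamma$ weakly in $L^2(\Gamma_T)$ by continuity of the trace). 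Lemma \ref{lem3} gives $p^\xi\to 0$ in $L^2(\Gamma_{2,T})\cap C([t_0,T];L^2(\Gamma_2))$, and Lemma \ref{Pcompact} gives $P^\xi\to P$ strongly in $L^1(0,T;W^{1,1}(\Omega))$ and weakly in $L^2(\Omega_T)$. Passing to a common subsequence, all limits hold simultaneously, and it remains to verify that the limit solves the weak formulation of \eqref{ff1}--\eqref{ff3} and that such a weak solution is unique.

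First I would pass to the limit in the weak forms of the $L$-equation \eqref{newf1} and the $l$-equation \eqref{newf3}. Testing \eqref{newf1} with a fixed $\varphi\in H^1(\Omega)$ against a smooth temporal weight vanishing at $t=T$, the diffusion term passes by the weak $H^1$-convergence of $L^\xi$, the volume reaction term by the strong $L^2$-convergence of $L^\xi$ and weak $L^2$-convergence of $P^\xi$, and the Robin boundary term $\int_\Gamma(-\lambda L^\xi+\gamma l^\xi)\varphi$ by the weak trace convergence of $L^\xi|_\Gamma$ together with the strong convergence of $l^\xi$; this yields exactly \eqref{ff1}. Since \eqref{newf3} is an ODE in time on $\Gamma$ whose right-hand side converges strongly in $L^2(\Gamma_T)$, the limit immediately satisfies \eqref{ff3}.

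The hard part is passing to the limit in the $P$-equation \eqref{newf2}, whose boundary flux $\chi_{\Gamma_2}\xi p^\xi$ is only bounded in $L^1(\Gamma_{2,T})$ (cf. \eqref{pl1}) and therefore cannot be handled directly. The key device is to rewrite the singular flux via the $p$-equation as $\xi p^\xi=\sigma l^\xi-\partial_t p^\xi$, where $\partial_t p^\xi\in L^2(\Gamma_{2,T})$ since the $p$-equation carries no diffusion. Testing \eqref{newf2} with $\varphi(x)\zeta(t)$, $\varphi\in C^\infty(\overline\Omega)$ and $\zeta\in C^1([0,T])$ with $\zeta(T)=0$, the boundary contribution becomes
\[
\int_0^T \zeta \int_{\Gamma_2}\sigma l^\xi \varphi \,dS\,dt - \int_0^T \zeta \int_{\Gamma_2}\partial_t p^\xi\, \varphi \,dS\,dt ,
\]
and I would integrate the second integral by parts in time. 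Using $\zeta(T)=0$, $p^\xi(0)=p_0$ and $p^\xi\to0$ in $L^2(\Gamma_{2,T})$, this boundary contribution converges to $\int_0^T\zeta\int_{\Gamma_2}\sigma l\varphi\,dS\,dt+\zeta(0)\int_{\Gamma_2}p_0\varphi\,dS$. By the definition \eqref{ff3a} of $P^*$, the last term equals $\zeta(0)\int_\Omega P^*\varphi\,dx$, which is precisely the contribution of the shifted initial datum $P(0)=P_0+P^*$, while the first term is the boundary source $\chi_{\Gamma_2}\sigma l$ of \eqref{ff2}. Choosing smooth $\varphi$ lets the diffusion term $d_P\int_0^T\zeta(\nabla P^\xi,\nabla\varphi)_\Omega\,dt$ pass to the limit through the strong $L^1$-convergence of $\nabla P^\xi$ (since $\zeta\nabla\varphi\in L^\infty$), while the remaining terms pass by weak $L^2$-convergence of $P^\xi$. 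This identifies $P$ as a weak solution of \eqref{ff2} with initial layer $P^*$, tested against smooth functions.

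It then remains to tidy two points. To extend the $P$-identity from smooth test functions to all $\varphi\in H^1(\Omega)$, I would invoke the parabolic regularity for the reduced system (as in Lemma \ref{regularity} and Theorem \ref{Reg}): since the boundary source $\chi_{\Gamma_2}\sigma l$ lies in $L^2(\Gamma_T)$, the solution of \eqref{ff2} satisfies $P\in L^2(0,T;H^{3/2}(\Omega))\subset L^2(0,T;H^1(\Omega))$, so $\nabla P\in L^2(\Omega_T)$ and the formulation closes by density. Finally, uniqueness of the weak solution of \eqref{ff1}--\eqref{ff3} follows because the reduced system is again a linear, weakly-reversible VSRD system of the same type treated in Theorem \ref{thm:existence}: the difference of two solutions with identical data solves the homogeneous system and vanishes by the same G{\aa}rding-inequality energy argument. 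This uniqueness identifies the limit as the unique weak solution claimed, which completes the proof.
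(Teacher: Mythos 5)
Your proposal is correct and follows essentially the same route as the paper: assemble the convergences from Lemmas \ref{lem3}, \ref{lem:limits} and \ref{Pcompact}, pass to the limit in the weak forms of the $L$- and $l$-equations, and handle the singular boundary flux in the $P$-equation by expressing $\xi p^{\xi}=\sigma l^{\xi}-\partial_t p^{\xi}$ and integrating by parts in time (which is exactly how the paper derives \eqref{ff19} and then \eqref{ff24}, producing the shifted initial datum $P_0+P^*$ via \eqref{ff3a}), with smooth-in-space test functions so that the gradient term passes via the strong $L^1(0,T;W^{1,1})$ convergence. Your additional remarks on extending the $P$-identity from smooth test functions to $H^1(\Omega)$ and on uniqueness via the G{\aa}rding energy argument only make explicit points the paper treats implicitly (via its remark that well-posedness of \eqref{ff1}--\eqref{ff3} follows as in Section \ref{sec:2}), so they do not constitute a different approach.
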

         
         \medskip
         
         \begin{remark}
         The well-posedness of system \eqref{ff1}--\eqref{ff3} can be shown in the same way as for system \eqref{f1}--\eqref{f3} in Section \ref{sec:2}.
         \end{remark}
         \begin{proof}
         All the limits are  already proven in the Lemmata \ref{lem3}, \ref{lem:limits} and \ref{Pcompact}. It remains to show that the limit $(L,P,l)$ in Lemma \ref{lem:limits} is the unique solution of system \eqref{ff1}--\eqref{ff3}. Indeed, by testing 
         \begin{equation*}
         \begin{cases}
         L^{\xi}_t - d_L\Delta L^{\xi} = -\beta L^{\xi} + \alpha P^{\xi}, &\qquad x\in\Omega,\quad t>0,\\
         d_L\frac{\partial L^{\xi}}{\partial \nu} = -\lambda L^{\xi} + \gamma l^{\xi}, &\qquad x\in\Gamma,\quad t>0,\\
         L^{\xi}(0,x) = L_0(x), &\qquad x\in\Omega,
         \end{cases}
         \end{equation*}
         with $\varphi \in C^1([0,T];H^1(\Omega))$, $\varphi(T)=0$ and by integration over $\Omega_T$, we have
         \begin{multline}
         -\int_{0}^T(L^{\xi}, \varphi_t)_{\Omega}\,ds + d_L\int_{0}^{T}(\nabla L^{\xi}, \nabla\varphi)_{\Omega}\,ds\\
         = (L_0, \varphi(0))_{\Omega} + \int_{0}^{T}(-\lambda L^{\xi} + \gamma l^{\xi},\varphi)_{\Gamma}\,ds + \int_{0}^{T}(-\beta L^{\xi} + \alpha P^{\xi}, \varphi)_{\Omega}\,ds.
         \label{weakLxi}
         \end{multline}
         By Lemma \ref{lem1}, $\{L^{\xi}\}_{\xi>0}$ is bounded in $L^2(0,T; H^1(\Omega))$ and together with \eqref{ff12}, we get
         \begin{equation}
         L^{\xi} \rightharpoonup L \quad\text{ in }\quad L^2(0,T;H^1(\Omega))
         \label{LPlimH1}
         \end{equation}
         up to a subsequence. 
         By using the Trace Theorem and \eqref{LPlimH1}, we have
         \begin{equation}\label{ff16}
         L^{\xi} \rightharpoonup L \quad\text{ in }\quad L^2(\Gamma_T).
      	  \end{equation}
         From \eqref{LPlimH1}--\eqref{ff16}, $L^{\xi} \rightarrow L$ and $P^{\xi} \rightharpoonup P$ in $L^2(\Omega_T)$, $l^{\xi} \rightarrow l$ in $L^2(\Gamma_T)$, we can pass to the limit in \eqref{weakLxi} as $\xi\rightarrow +\infty$ and obtain 
         \begin{multline}
         -\int_{0}^T(L, \varphi_t)_{\Omega}\,ds + d_L\int_{0}^{T}(\nabla L, \nabla\varphi)_{\Omega}\,ds\\
         = (L_0, \varphi(0))_{\Omega} + \int_{0}^{T}(-\lambda L + \gamma l,\varphi)_{\Gamma}\,ds + \int_{0}^{T}(-\beta L + \alpha P, \varphi)_{\Omega}\,ds
         \label{ff17}
         \end{multline}
         or equivalently that $L$ is a weak solution of \eqref{ff1}.
         
         Next, by taking the inner product of $p^{\xi}_t = \sigma l^{\xi} - \xi p^{\xi}$
         with a test function $\psi\in C^1(0,T;L^2(\Gamma_{2}))$ satisfying $\psi(T) = 0$, we get
         \begin{equation}
         -\int_{0}^{T}(p^{\xi}, \psi_t)_{\Gamma_2}\,ds= (p_0, \psi(0))_{\Gamma_2} + \int_{0}^{T}(\sigma l^{\xi}, \psi)_{\Gamma_2}\,ds - \int_{0}^{T}(\xi p^{\xi}, \psi)_{\Gamma_2}\,ds.
         \label{ff18}
         \end{equation}
         In order to pass to the limit $\xi \rightarrow +\infty$ in \eqref{ff18}, we apply Lemma \ref{lem3} and Lemma \ref{lem:limits} and obtain
         \begin{equation} 
         \lim_{\xi\rightarrow+\infty}\int_{0}^{T}(\xi p^{\xi}, \psi)_{\Gamma_2}ds = (p_0, \psi(0))_{\Gamma_2} + \int_{0}^{T}(\sigma l, \psi)_{\Gamma_2}ds.
         \label{ff19}
         \end{equation}
         
         In the following, we consider equation for $P^{\xi}$ in the weak form, i.e. 
         \begin{multline}
         -\int_{0}^{T}(P^{\xi}, \varphi_t)_{\Omega}\,ds + d_L\int_{0}^{T}(\nabla P^{\xi}, \nabla \varphi)_{\Omega}\,ds\\
         = (P_0, \varphi(0))_{\Omega} + \int_{0}^{T}(\xi p^{\xi}, \varphi)_{\Gamma_2}\,ds + \int_{0}^{T}(\beta L^{\xi} - \alpha P^{\xi}, \varphi)_{\Omega}\,ds
         \label{ff20}
         \end{multline}
         for test-functions $\varphi \in C^1(0,T;C^1(\Omega))$ with $\varphi(T)=0$. 
         By using \eqref{ff19}, the limits $L^{\xi} \rightarrow L$ and $P^{\xi} \rightarrow P$ in $L^1(0,T;W^{1,1}(\Omega))$, we can pass to the limit $\xi\rightarrow +\infty$ in \eqref{ff20}, we obtain 
         \begin{multline}
         -\int_{0}^{T}(P, \varphi_t)_{\Omega}\,ds + d_L\int_{0}^{T}(\nabla P, \nabla \varphi)_{\Omega}\,ds\\
         = {(P_0 + P^*, \varphi(0))_{\Omega}}  + \int_{0}^{T}(\sigma l, \varphi)_{\Gamma_2}\,ds + \int_{0}^{T}(\beta L - \alpha P, \varphi)_{\Omega}\,ds,
         \label{ff24}
         \end{multline}
         where we have use \eqref{ff3a}.
         This means that $P$ is a weak solution of \eqref{ff2}.
         
         Finally, by testing $l^{\xi}_t = \lambda L^{\xi} - (\gamma + \chi_{\Gamma_2}\sigma)l^{\xi}$ with $\psi \in C^1([0,T];L^2(\Gamma))$ satisfying $\psi(T) = 0$ 
         we get
         \begin{equation}
         -\int_0^T(l^{\xi}, \psi_t)_{\Gamma}\,ds = (l_0, \psi(0))_{\Gamma} + \int_{0}^{T}(\lambda L^{\xi}, \psi)_{\Gamma}\,ds + \int_{0}^{T}[(\gamma l^{\xi}, \psi)_{\Gamma} + (\sigma l^{\xi}, \psi)_{\Gamma_2}]\,ds.
         \label{ff26}
         \end{equation}
         We use $L^{\xi} \rightharpoonup L$ and $l^{\xi} \rightharpoonup l$ in $L^2(\Gamma_T)$
         to pass the limit $\xi\rightarrow +\infty$ in \eqref{ff26} and obtain
         \begin{equation}
         -\int_0^T(l, \psi_t)_{\Gamma}\,ds = (l_0, \psi(0))_{\Gamma} + \int_{0}^{T}(\lambda L, \psi)_{\Gamma}\,ds + \int_{0}^{T}[(\gamma l, \psi)_{\Gamma} + (\sigma l, \psi)_{\Gamma_2}]\,ds
         \label{ff27}
         \end{equation}
         or equivalently $l$ is a weak solution of \eqref{ff3}.
         \end{proof}

 \section{Numerical discretisation and qualitative discussions}\label{sec:3}
 In this section, we shall present prototypical numerical examples of the model system \eqref{f1}--\eqref{f3} in order highlight certain qualitative features. As the kinetic parameters for SOP precursor cells are neither known in-vivo or in-vitro, we shall use what we expect to be generic system parameters i.e. typical reaction- and diffusion rates, for which the model exhibits the expected behaviour. Thus, the aim of the following can only be a \emph{discussion of interesting qualitative features} and not a quantitative simulation of Lgl localisation in SOP precursor cell.
 
         
         {
         The test cases discussed later in this Section serve to illustrate in particular 
         i) the role of surface diffusion in the model \eqref{f1}--\eqref{f3} in smoothing jumps of concentrations on the boundary (see Fig. \ref{fig3}), 
         ii) the influence of the presence/absence of surface diffusion to the system's behaviour and the attained complex balance equilibrium (this happens in combination with the cell-geometry and the discontinuity of the systems's boundary parameters, see Figs. \ref{figLP} and \ref{figBigDiff}),
         and iii) the asymptotic behaviour of the system as $\xi\to\infty$, which confirms the theoretical QSSA in Section \ref{sec:4} (see Figs. \ref{fig3a} and \ref{SteadyState}).
         }
         
As a numerical method, we use implicit Euler as time-discretisation and a piecewise linear finite elements as space-discretisation. In particular, we approximate the boundary $\Gamma$ as a polygon, which allows to calculate the discretisation of the Laplace-Beltrami operator as directional derivatives. Moreover, the used triangulation mesh is strongly refined near the boundary of the active boundary part $\partial\Gamma_2$, which are discontinuous points of the system. The details of the numerical scheme, a plot of the used triangulation mesh and the discretisation of the Laplace-Beltrami operator are given in the Appendix \ref{appnum}. See also \cite{EFPT} for the numerical analysis of such volume-surface reaction- diffusion systems including discrete entropy structures and uniform in time convergence rates.

         
As a prototypical geometry of a cell, we shall consider $\Omega\subset \mathbb R^2$ being the unit ball. The active boundary part shall be the intersection of the unit circle with the negative quadrant, i.e.   $\Gamma_2 = \{(1, \theta): \pi\leq \theta \leq 3\pi/2\}$.

         
         As generic parameters, we use the following reaction rates 
         \begin{equation}\label{reac}
         \alpha = 1, \qquad \beta = 2, \qquad \gamma = 2, \qquad \lambda = 4, \qquad \sigma = 3,
         \end{equation}
         the following volume diffusion rates
         \begin{equation}\label{diff}
         d_L = 0.01,\quad d_P = 0.02,
         \end{equation}
         and the following constant initial concentrations
         \begin{equation}\label{InitialData}
         	L_0(x, y) \equiv 0.8,\qquad P_0(x,y) \equiv 0.6, \qquad l_0(x,y) \equiv 0.3, \qquad  p_0(x,y) \equiv 0.4.
         \end{equation}
         The value of $\xi$ will be chosen differently during the discussion of the numerical examples. Also the surface diffusion rates $d_l$ and $d_p$ will be specified later.
         
         \subsection{The effects of surface diffusion}\label{subsec:diff}

         \begin{figure}[htp]
         \centering
         \begin{subfigure}{.5\textwidth}
           \centering
           \includegraphics[width=1\linewidth]{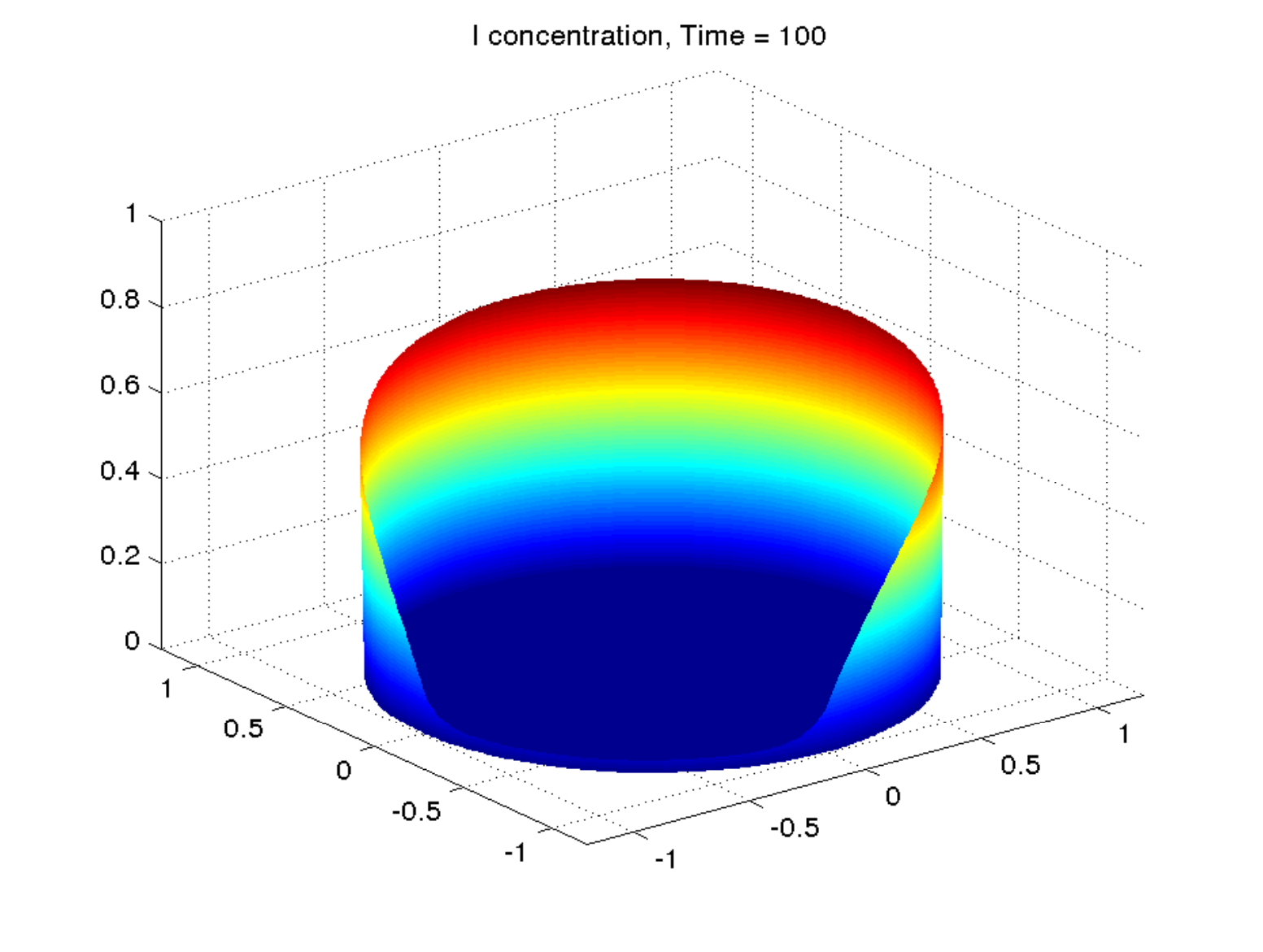}
           \caption{$l$-Lgl with surface diffusion}\label{Figldiff}
         \end{subfigure}%
         \begin{subfigure}{.5\textwidth}
           \centering
           \includegraphics[width=1\linewidth]{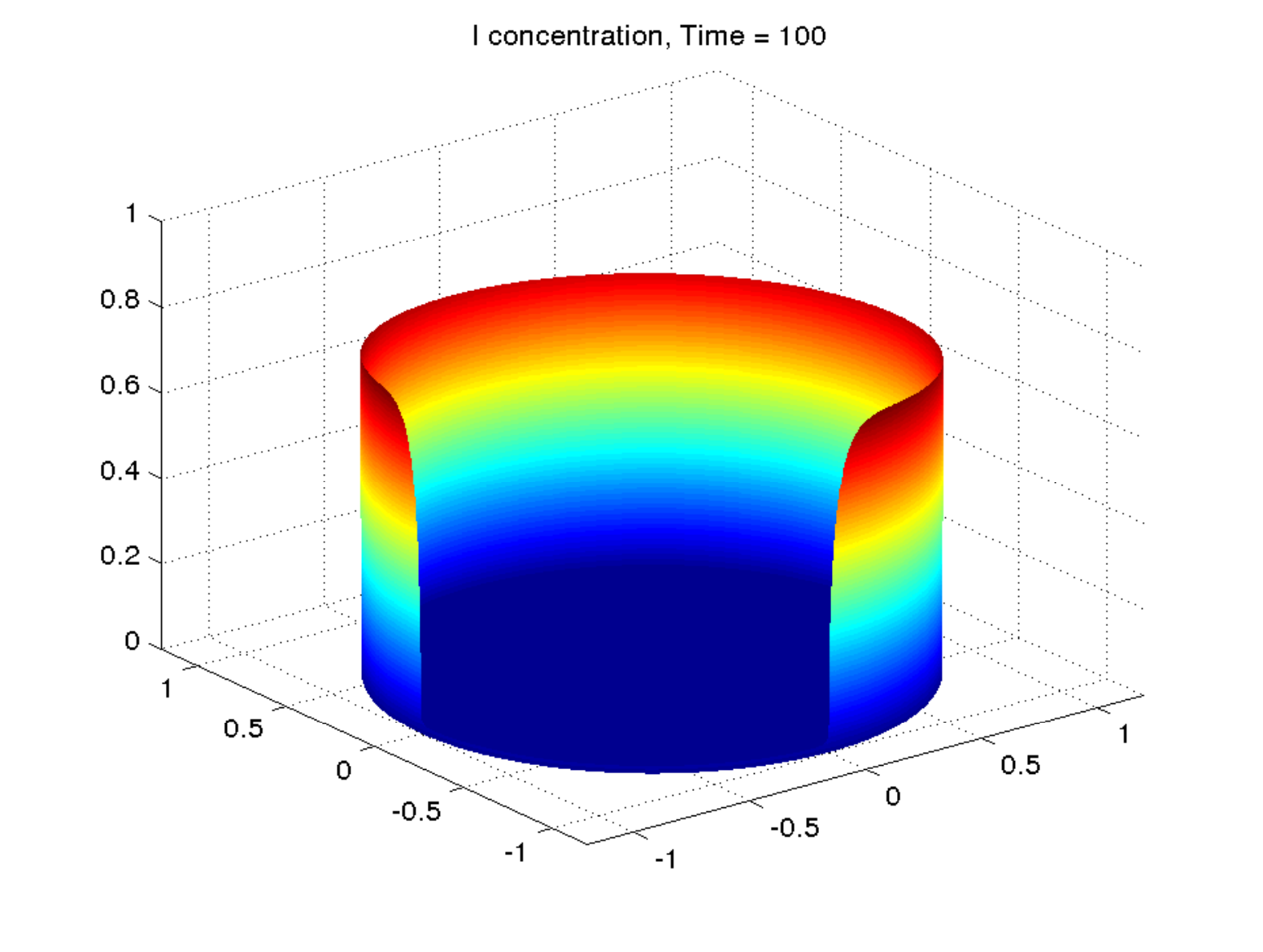}
           \caption{$l$-Lgl without surface diffusion}\label{Figlnodiff}
         \end{subfigure}
         \begin{subfigure}{.5\textwidth}
           \centering
           \includegraphics[width=1\linewidth]{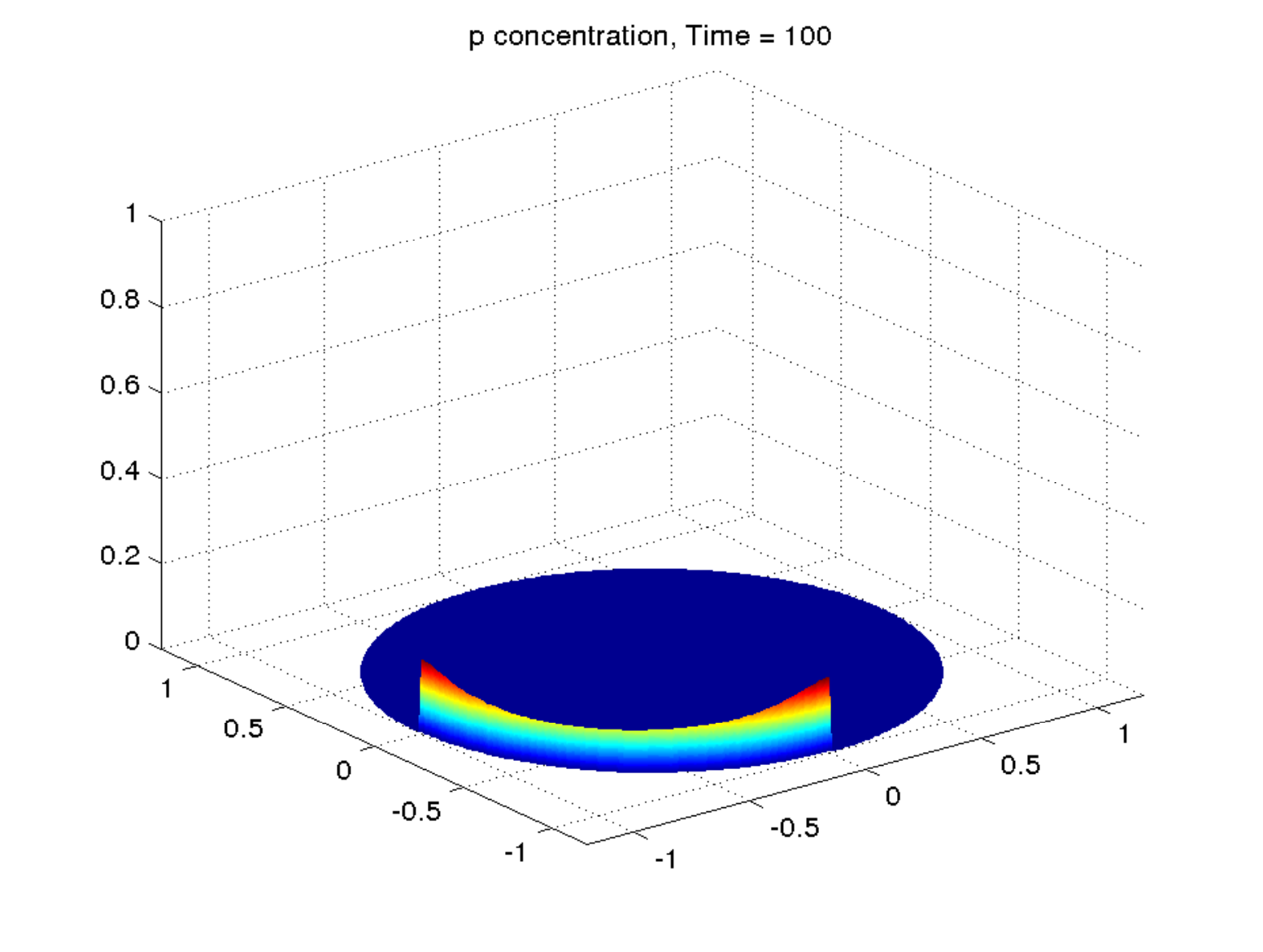}
           \caption{$p$-Lgl with surface diffusion}\label{Figpdiff}
         \end{subfigure}%
         \begin{subfigure}{.5\textwidth}
           \centering
           \includegraphics[width=1\linewidth]{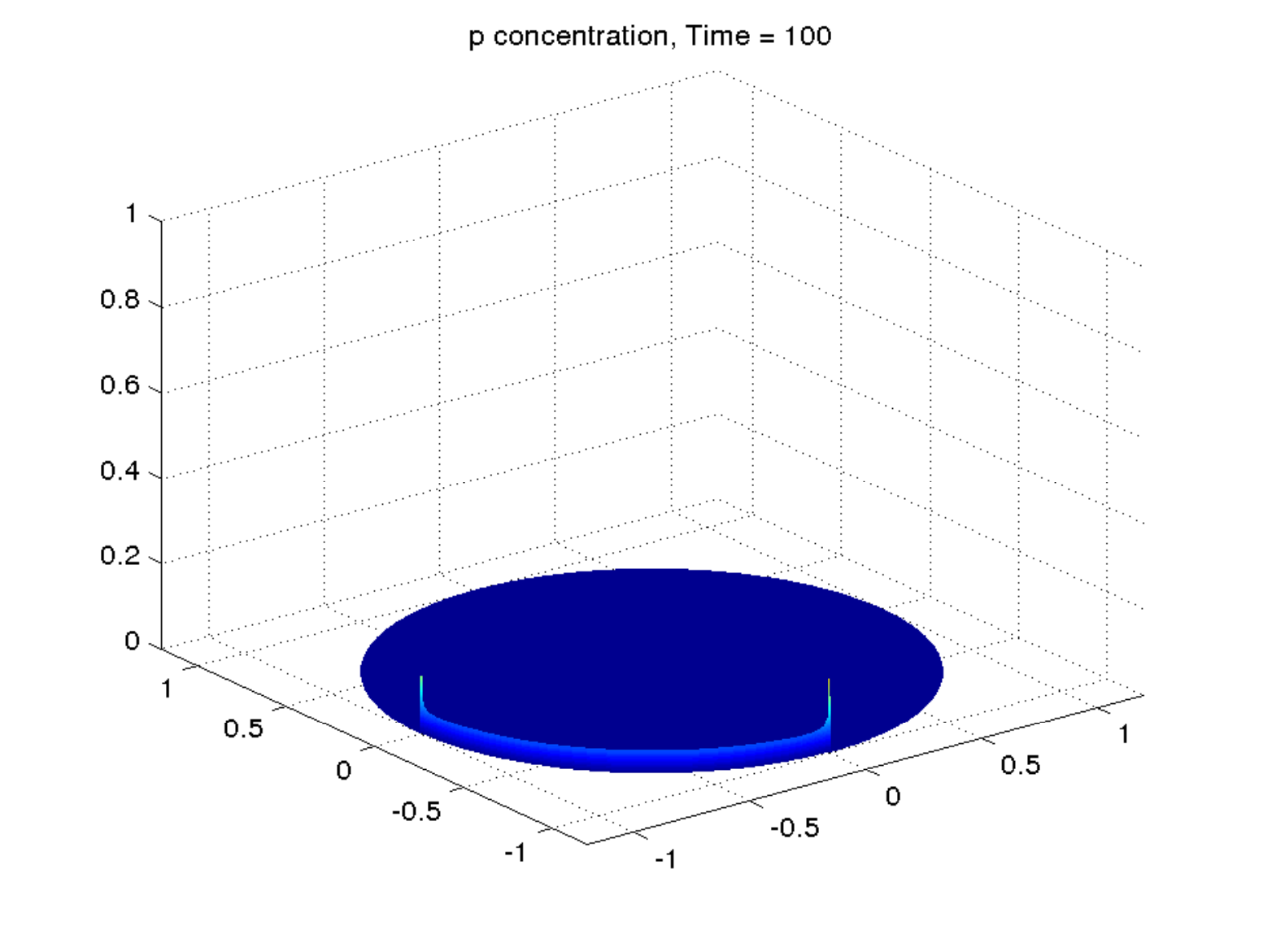}
           \caption{$p$-Lgl without surface diffusion}\label{Figpnodiff}
         \end{subfigure}
         \caption{Comparison of the numerical stationary states of $l$ and $p$  on a unit-circular cell (i.e. the blue circle centred at the origin) with surface diffusion rates $d_l = 0.02$, $d_p = 0.04$ (Figs. \ref{Figldiff} and \ref{Figpdiff}) and without surface diffusion $d_l = d_p = 0$ (Figs. \ref{Figlnodiff} and \ref{Figpnodiff}) for the parameters \eqref{reac}, \eqref{diff} and initial data \eqref{InitialData}: The $l$-Lgl concentration strongly decreases towards the active boundary part $\Gamma_2$ located within the negative quadrant in the front (Figs. \ref{Figldiff} and \ref{Figlnodiff}). In Fig. \ref{Figldiff}, the decrease of the $l$-Lgl is clearly smoothed by the surface diffusion. In Fig. \ref{Figlnodiff} the decay profile is steep yet still smoothed by an indirect surface diffusive effect caused by volume diffusion and reversible reaction. This is confirmed by Figs. \ref{Figpdiff} and \ref{Figpnodiff} plotting the subsequent concentrations of phosphorylated $p$-Lgl. Fig \ref{Figpnodiff} shows in particular the spikes of $p$-Lgl produced near $\partial\Gamma_2$ due to the indirect surface diffusion effect.}
         \label{fig3}
         \end{figure}
         
         The first three numerical test examples Figs. \ref{fig3}, \ref{figLP} and \ref{figBigDiff} illustrate the role of surface diffusion by comparing the numerical stationary state solutions of the system for two cases: i) with surface diffusion rates $d_l = 0.02$, $d_p = 0.04$ and ii) without surface diffusion, i.e. $d_l = d_p = 0$.
        
        \medskip
         
         Figure \ref{fig3} plots the resulting numerically stationary state concentrations of non-phos\-phorylated cortical Lgl $l$ and phosphorylated cortical Lgl $p$ (for the generic parameters \eqref{reac}, \eqref{diff} and the initial data \eqref{InitialData}). 
         
         In the case with surface diffusion, Figure \ref{Figldiff} shows a smoothly decaying profile of $l$ around the boundary points $\partial\Gamma_2$, i.e. around the points $(-1,0)$ and $(0,-1)$, where the lower concentration of $l$ on $\Gamma_2$ is the result of $l$ being converted into $p$. The corresponding 
         numerical steady state concentration of $p$ on $\Gamma_2$ is plotted in 
         Figure \ref{Figpdiff}. The increase of $p$ towards the points $(-1,0)$ and $(0,-1)$ corresponds to the increasing values of $l$ over and beyond these boundary points. 
         
         As comparison, the Figures \ref{Figlnodiff} and \ref{Figpnodiff} show the numerical stationary state concentrations of $l$ and $p$ without surface diffusion. Due to the absence of surface diffusion, Figure \ref{Figlnodiff} depicts a significantly sharper profile of $l$ around the boundary points $\partial\Gamma_2$. However, the profile in $l$ is still smooth and so is the corresponding profile of the stationary state concentration of $p$ on $\Gamma_2$, which is shown in \ref{Figpnodiff} using a very highly refined mesh to eliminate potential numerical artefacts. 
In our understanding, these sharp yet smooth profiles of $l$ and $p$ 
are the combined effect of the volume diffusion of $L$ and $P$ and the reversible reactions between $L$ and $l$, 
which transfer a diffusive effect from the volume $\Omega$ onto the 
boundary $\Gamma$. In the context of reaction-diffusion systems with partially degenerate diffusion, such an indirect diffusion effect has been analytically shown  first in \cite{DF07} and for general linear weakly-reversible systems recently in \cite{FPT}.

         \medskip


         \begin{figure}[htp]
         	\begin{subfigure}{.45\textwidth}
         		\centering
         		\includegraphics[width=1\linewidth]{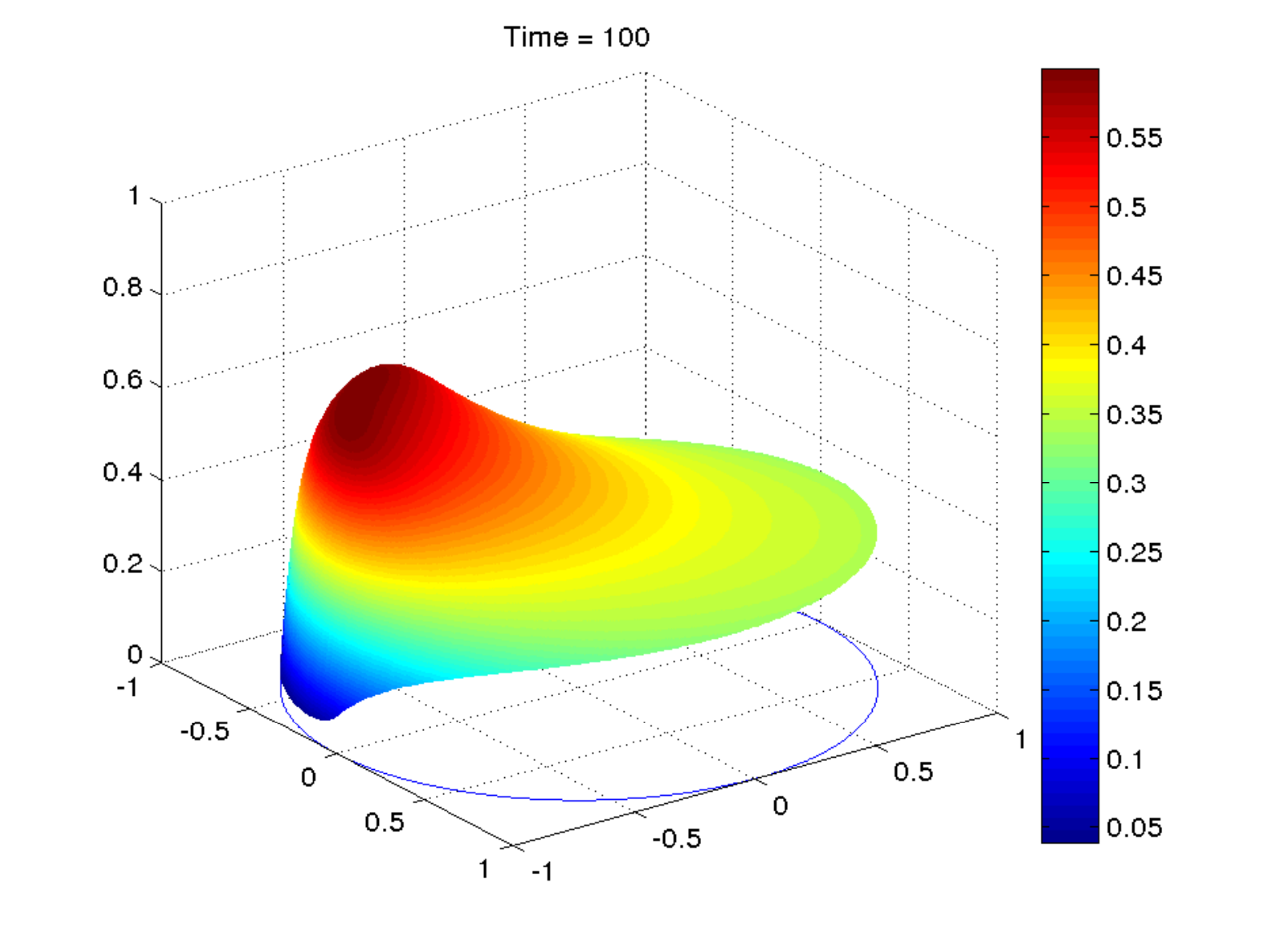}
         		\caption{$L$-Lgl with surface diffusion}\label{FigLdiff}
         	\end{subfigure}
         	\begin{subfigure}{.45\textwidth}
         		\centering
         		\includegraphics[width=1\linewidth]{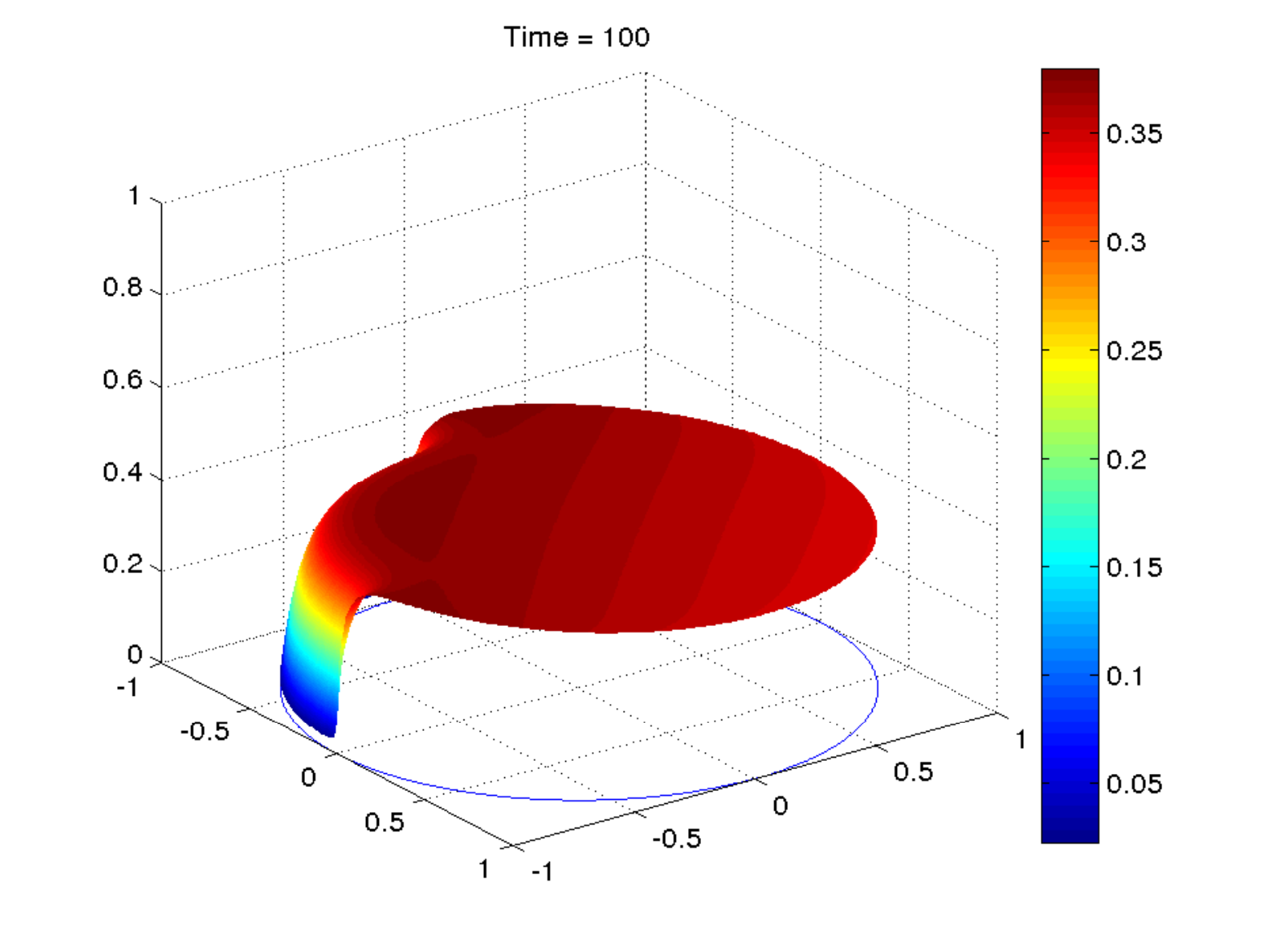}
         		\caption{$L$-Lgl without surface diffusion}\label{FigLnodiff}
         	\end{subfigure}
         	\begin{subfigure}{.45\textwidth}
         		\centering
         		\includegraphics[width=1\linewidth]{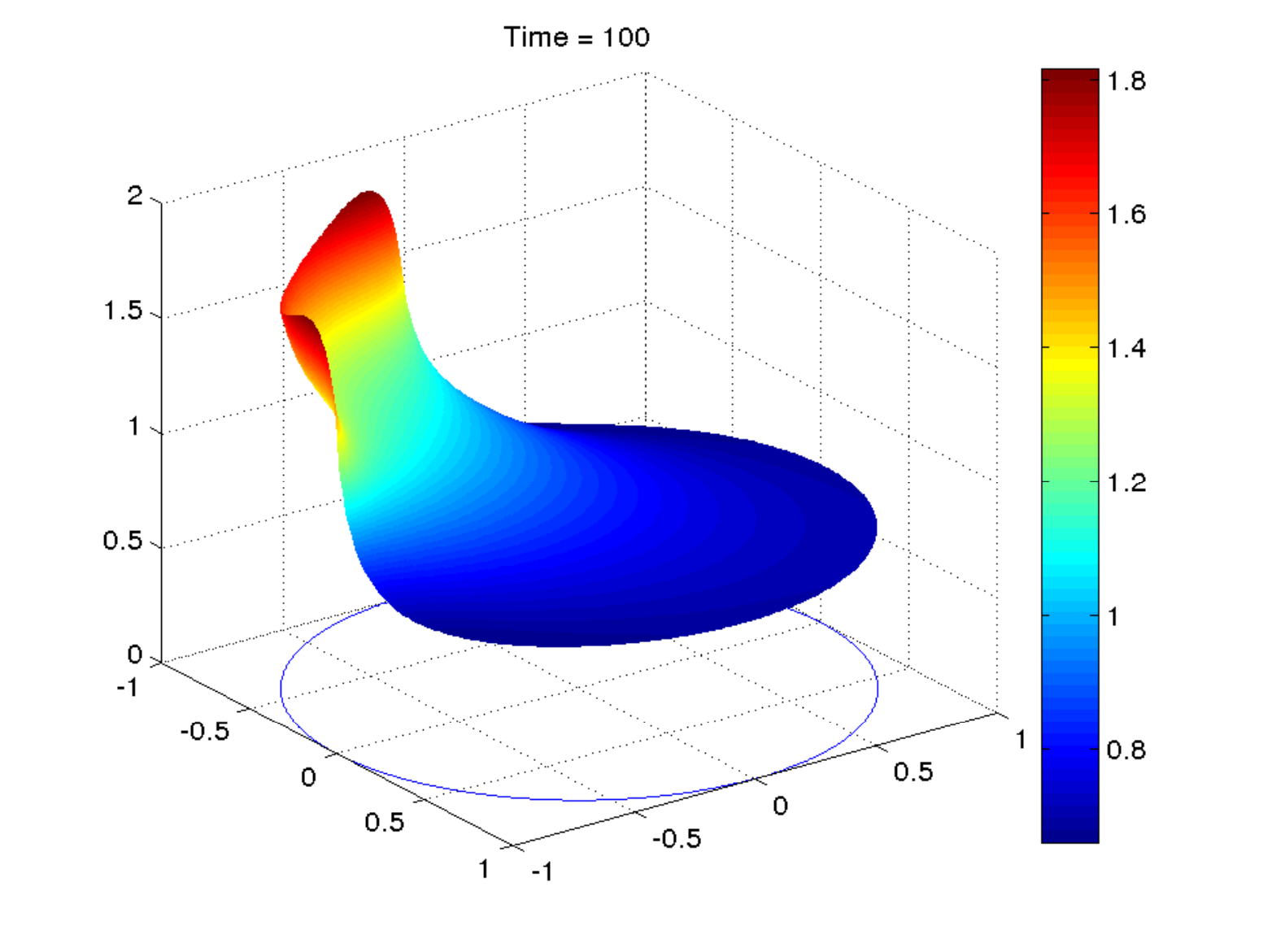}
         		\caption{$P$-Lgl with surface diffusion}\label{FigPdiff}
         	\end{subfigure}	
         	\begin{subfigure}{.45\textwidth}
         		\centering
         		\includegraphics[width=1\linewidth]{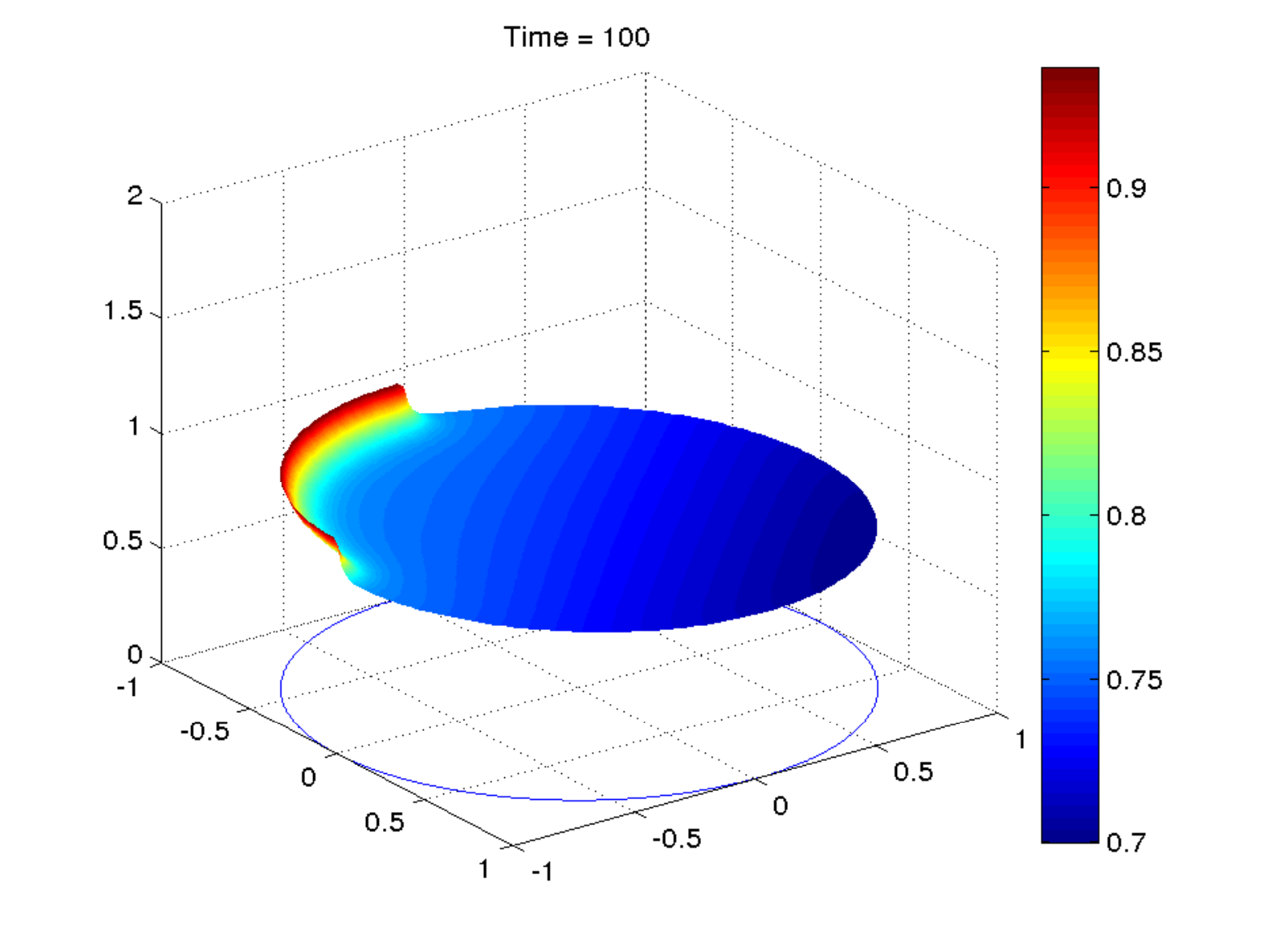}
         		\caption{$P$-Lgl without surface diffusion}\label{FigPnodiff}
         	\end{subfigure}	
         	\caption{Concentrations of the numerical stationary state of $L$ and $P$ on a unit-circular cell with surface diffusion rates $d_l = 0.02, d_p = 0.4$ (Figs. \ref{FigLdiff} and \ref{FigPdiff}) and without surface diffusion $d_l = d_p = 0$ (Figs. \ref{FigLnodiff} and \ref{FigPnodiff}) for the parameters \eqref{reac}, \eqref{diff} and initial data \eqref{InitialData}. Figs. \ref{FigLdiff} and \ref{FigPdiff} show how surface diffusion allows to a lateral flow of $l$-Lgl towards the active boundary $\Gamma_2$ (localised at the negative quadrant on the left side of the plot),
where it becomes phosphorylated and released into the cytoplasm creating high $P$-Lgl concentrations near $\Gamma_2$, see Fig. \ref{FigPdiff}. Subsequently, the reaction $P\xrightarrow{\alpha}L$ leads to a strong "hump" of $L$-Lgl away from the boundary $\Gamma_2$ within $\Omega$. Fig. \ref{FigLnodiff} and \ref{FigPnodiff} confirm that without surface diffusion, no such "hump" can occur. This examples also underlies that complex balance systems like model \eqref{f1}--\eqref{f3} behave significantly more intricate than detailed balance systems.}
         	\label{figLP}
         \end{figure}
         
         Figure \ref{figLP} plots the volume concentrations $L$ and $P$
         corresponding to Figure \ref{fig3}. 
         In the case with surface diffusion, Figure \ref{FigLdiff} shows very interestingly and somewhat surprisingly a "hump" in the numerical stationary state concentration of $L$ near the active boundary part $\Gamma_2$.
         This "hump" is not visible in Figure \ref{FigLnodiff} in the case without surface diffusion. The corresponding volume concentrations of $P$ in the Figures 
         \ref{FigPdiff} (with diffusion) and \ref{FigPnodiff} (without diffusion)
         allow to explain this "hump" as the effect of surface diffusion leading to a significant additional transport of $l$-Lgl (compared to the case without surface diffusion) along $\Gamma$ to the active boundary part $\Gamma_2$, where $l$-Lgl becomes phosphorylated into $p$-Lgl, which 
         is subsequently released from the cortex and thus results into a much higher concentration of $P$ along $\Gamma_2$ as depicted in \ref{FigPdiff}. 
         The "hump" in $L$ is then the consequence of the reversible reaction between 
         $L$ and $P$ and the volume diffusion of $L$. 

\medskip

\begin{remark}        
We remark that while Figures \ref{fig3} and \ref{figLP} cannot be viewed as a realistic simulation of the asymmetric localisation of cell-fate determinants, 
the qualitative behaviour of the asymmetric Lgl localisation nevertheless suggests that surface diffusion might play an important role.         
In particular, surface diffusion might help to explain an experimentally observed gap between $\Gamma_2$ the localisation of cell-fate determinant Numb, 
which is observed in neuroblast cell, yet not in SOP cells, see \cite{MEBWK}.
\end{remark}
         
         \medskip
         
                  \begin{figure}[htp]
         	\begin{subfigure}{.45\textwidth}
         		\centering
         		\includegraphics[width=1\linewidth]{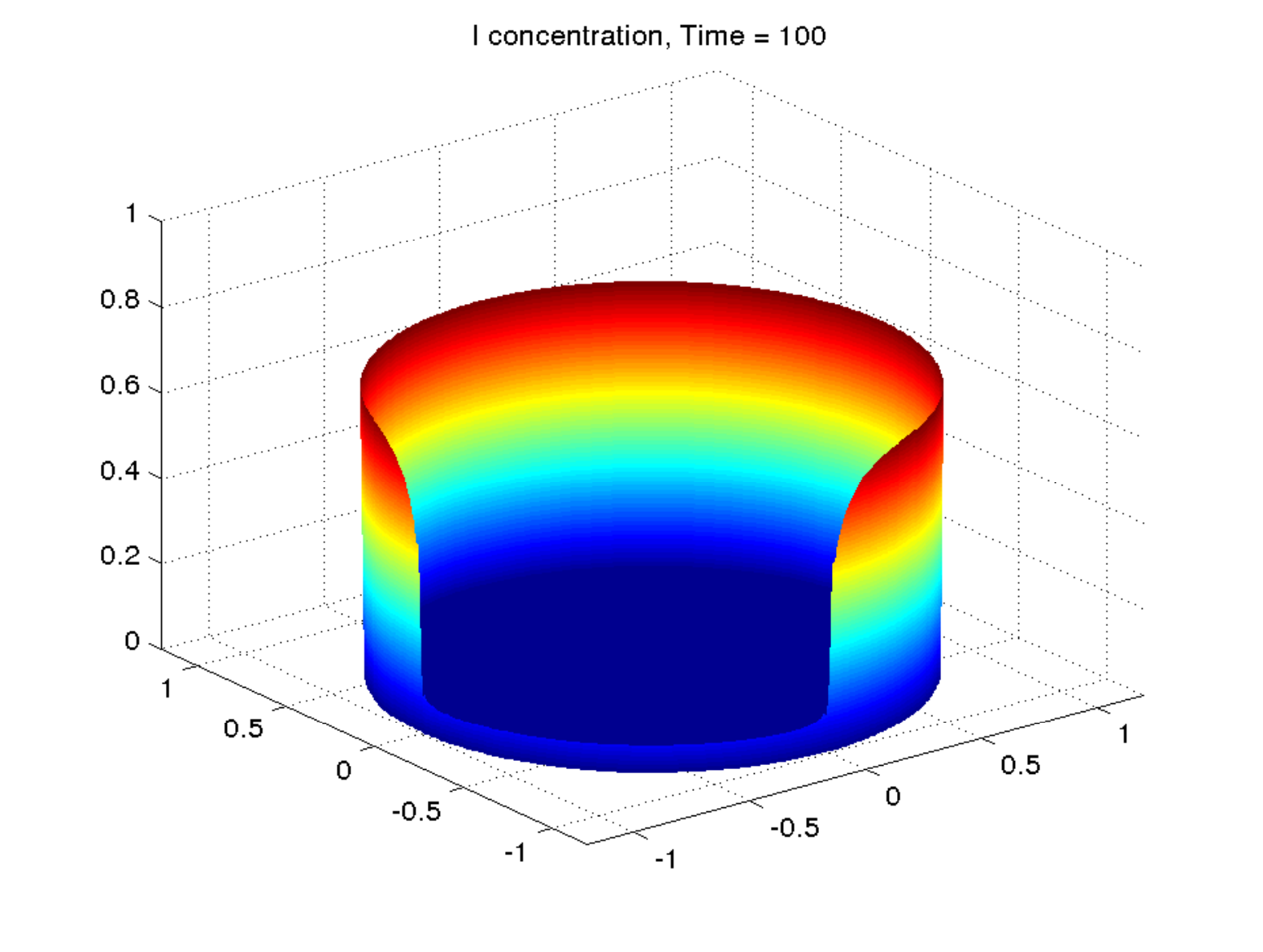}
         		\caption{$l$-Lgl without surface diffusion with big volume diffusion rate $d_L = 0.1$}\label{FiglBigdL}
         	\end{subfigure}	
         	\begin{subfigure}{.45\textwidth}
         		\centering
         		\includegraphics[width=1\linewidth]{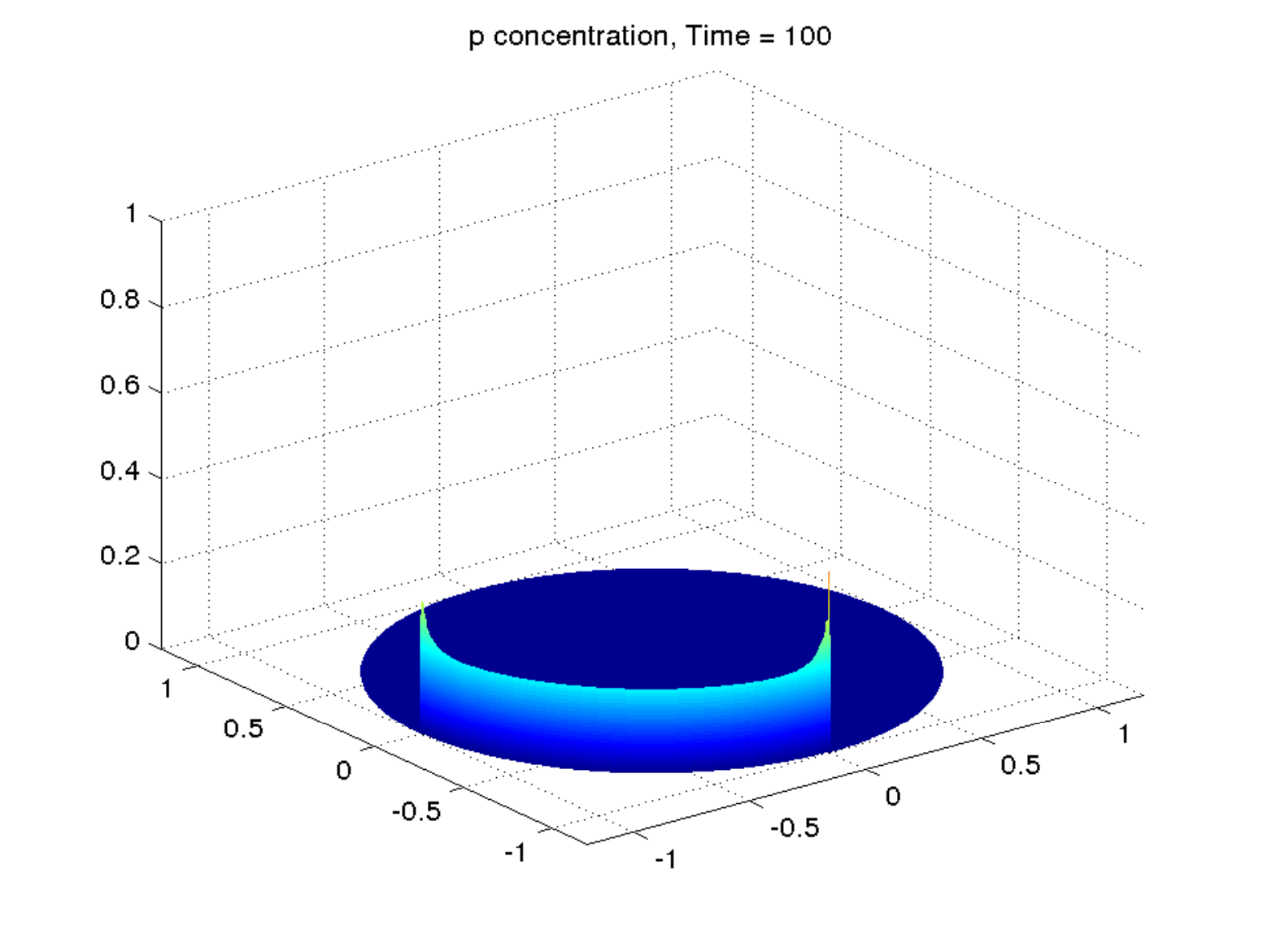}
         		\caption{$p$-Lgl without surface diffusion with big volume diffusion rate $d_L = 0.1$}\label{FigpBigdL}
         	\end{subfigure}	
         	\caption{Concentrations of the numerical stationary state of $l$-Lgl and $p$-Lgl on a unit-circular cell without surface diffusion yet with ten-fold volume-diffusion rate $d_L = 0.1$ while $d_P = 0.02$, $d_l = d_p = 0$ and initial data \eqref{InitialData} remain unchanged. Confirming the explanation of Fig. \ref{fig3} that volume diffusion and reversible reactions induce an indirect surface diffusion effect on $l$ and $p$, Figs. \ref{FiglBigdL} and \ref{FigpBigdL} plot the increased indirect surface diffusion effect on $l$ and even $p$ by increasing tenfold only the volume diffusion rate $d_L$.}
         	\label{figBigDiff}
         \end{figure}

         In Figure \ref{figBigDiff}, we investigate further the case without  surface diffusion by increasing the volume diffusion rate $d_L$ ten-folds compare to \eqref{diff}. More precisely, we set $d_l = d_p = 0$, $d_L = 0.1$ and $d_P = 0.02$.
         
         Figure \ref{FiglBigdL} shows the effects of the increased volume 
         diffusion $d_L = 0.1$. It leads to a certain widening and flattening of the profile 
         of $l$-Lgl over the boundary points of $\Gamma_2$. This profile of $l$ 
         is less steeper than the profile of $l$ in Figure \ref{Figlnodiff} without surface diffusion but still steeper 
         than the profile of $l$ in Figure \ref{Figldiff} with surface diffusion. Thus, the observed effect is consistent and confirms 
         the above explanation that the profile of $l$ and $p$ in 
         cases without surface diffusion are a combined effect of 
         volume diffusion $d_L$ and the reversible reaction of $L \rightleftharpoons l$:
         An increased volume diffusion $d_L$ leads thus to an increased 
         diffusive effect at the surface $\Gamma$. 
         
         The corresponding stationary state profile of $p$-Lgl plotted in Figure \ref{FigpBigdL} shows that also the profiles of $p$-Lgl near the boundary points of $\Gamma_2$ are widened, yet still steeper that the profiles of $p$ in Figure \ref{Figpdiff} with surface diffusion.
         
         The increase of volume diffusion rate $d_L$ does not only affect to profile of $p$ and $l$ around $\Gamma_2$ as discussed above but also changes the absolute value of stationary states of $p$ and $l$ on $\Gamma_2$. More precisely, by comparing Figure \ref{FiglBigdL} and Figure \ref{Figlnodiff} (or Figure \ref{FigpBigdL} and Figure \ref{Figpnodiff}) we see that the absolute value of $p$ and $l$ on $\Gamma_2$ in the case $d_L = 0.1$ are higher than that in the case $d_L = 0.01$.
         
         \subsection{Asymptotic decay of $p$ for large $\xi$}
         
         In SOP cells, the reaction $p \xrightarrow{\xi} P$ of cortical Lgl $p$ to cytoplasmic Lgl $P$ is suggested to be significantly faster  than the other reactions.
         That means that the expulsion rates $\xi$ is expected to be much larger 
         than the generic reaction rates in \eqref{reac}.
         We are thus interested to study the qualitative behaviour for increasing reaction rates $\xi$ while keeping the reaction rates \eqref{reac} fixed.
         
         Intuitively, one expects that when $\xi$ becomes larger and larger, the concentration of $p$-Lgl will decay to zero since the $p$-Lgl is released 
         more and more rapidly to $P$-Lgl.

         \begin{figure}[htp]
         \centering
         \begin{subfigure}{.45\textwidth}
           \centering
           \includegraphics[width=1\linewidth]{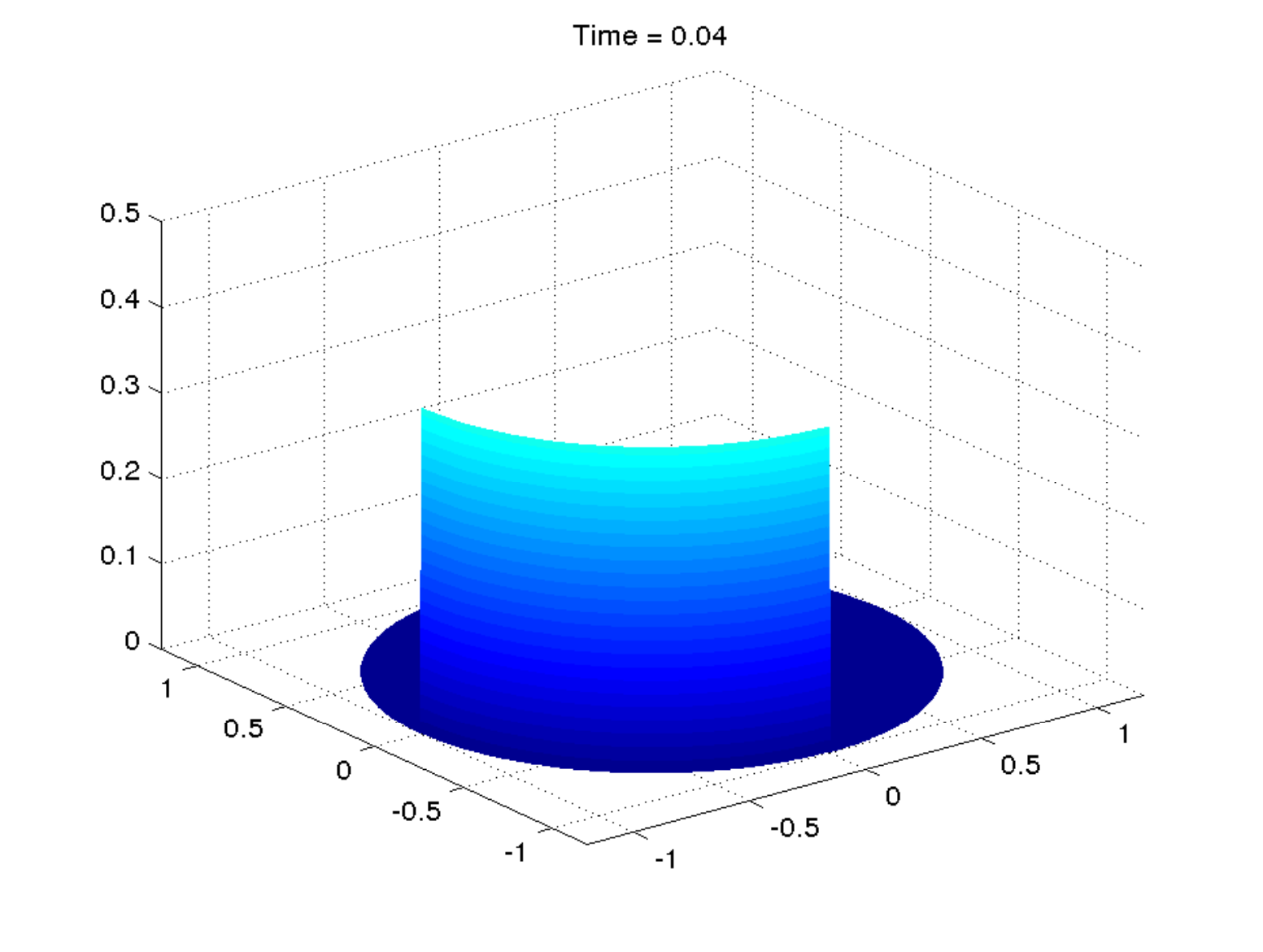}
           \caption{$\xi = 10$}
         \end{subfigure}
         \begin{subfigure}{.45\textwidth}
           \centering
           \includegraphics[width=1\linewidth]{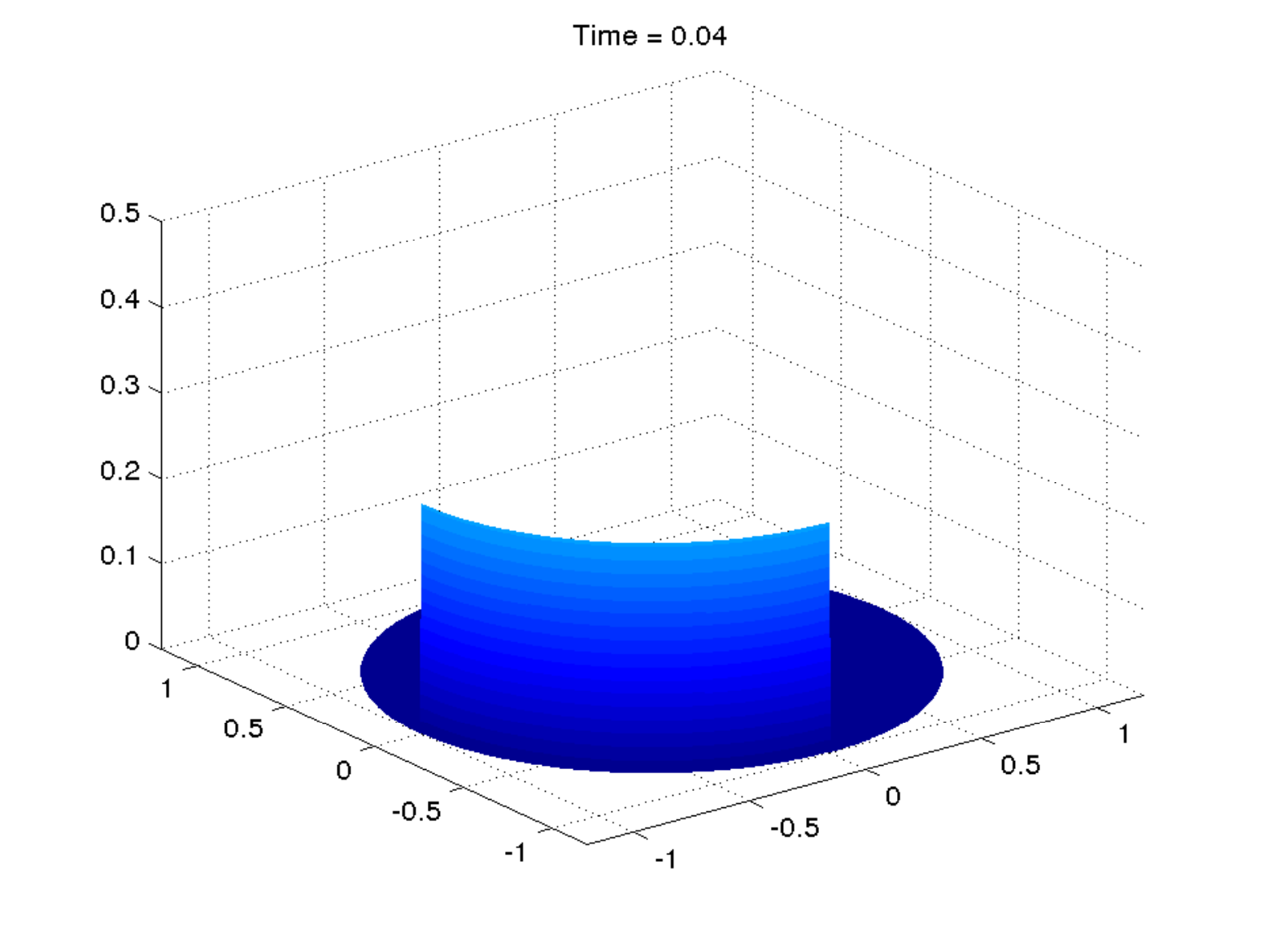}
           \caption{$\xi = 20$}
         \end{subfigure}
         
         \begin{subfigure}{.45\textwidth}
           \centering
           \includegraphics[width=1\linewidth]{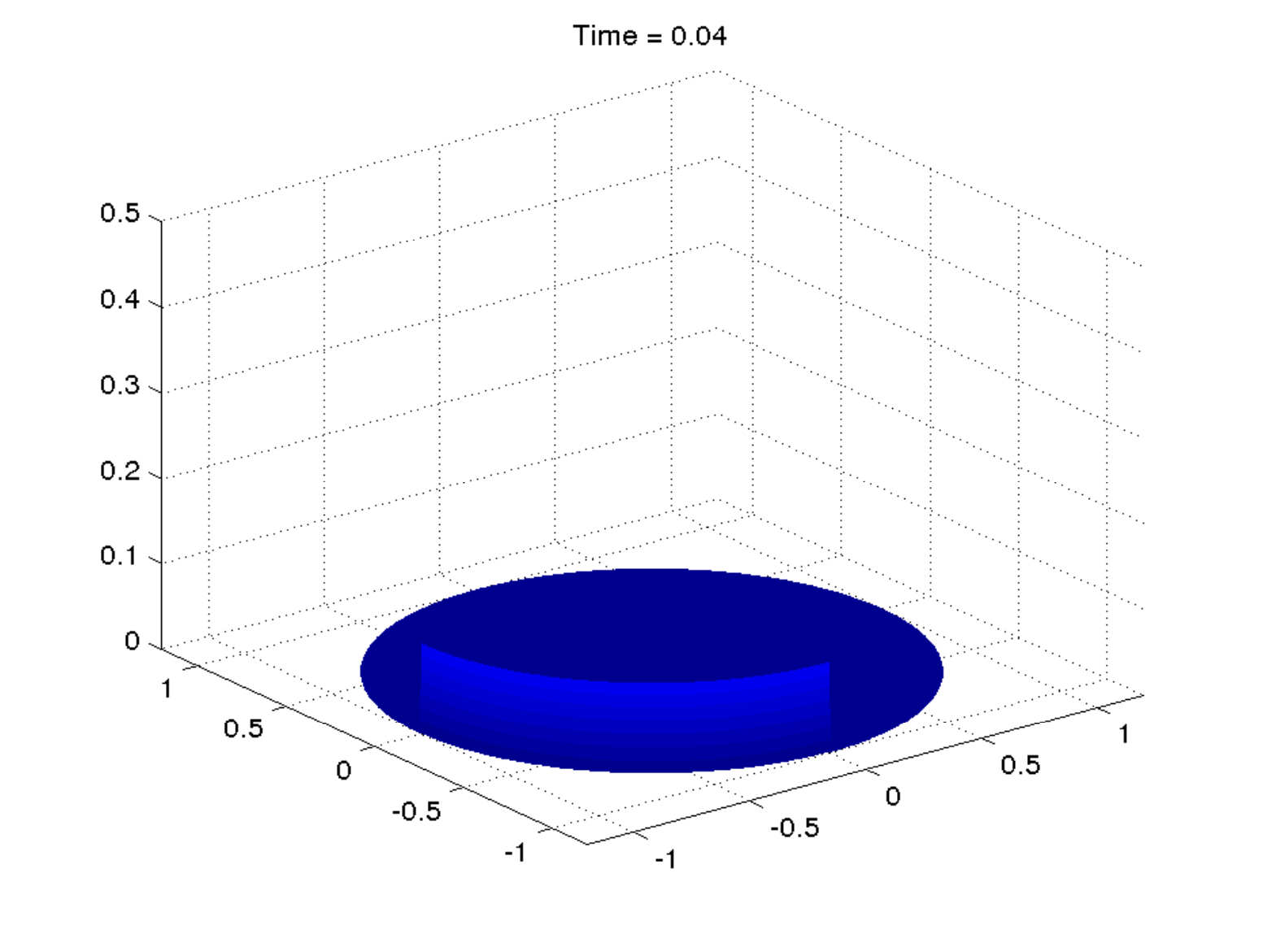}
           \caption{$\xi = 50$}
         \end{subfigure}
         \begin{subfigure}{.45\textwidth}
           \centering
           \includegraphics[width=1\linewidth]{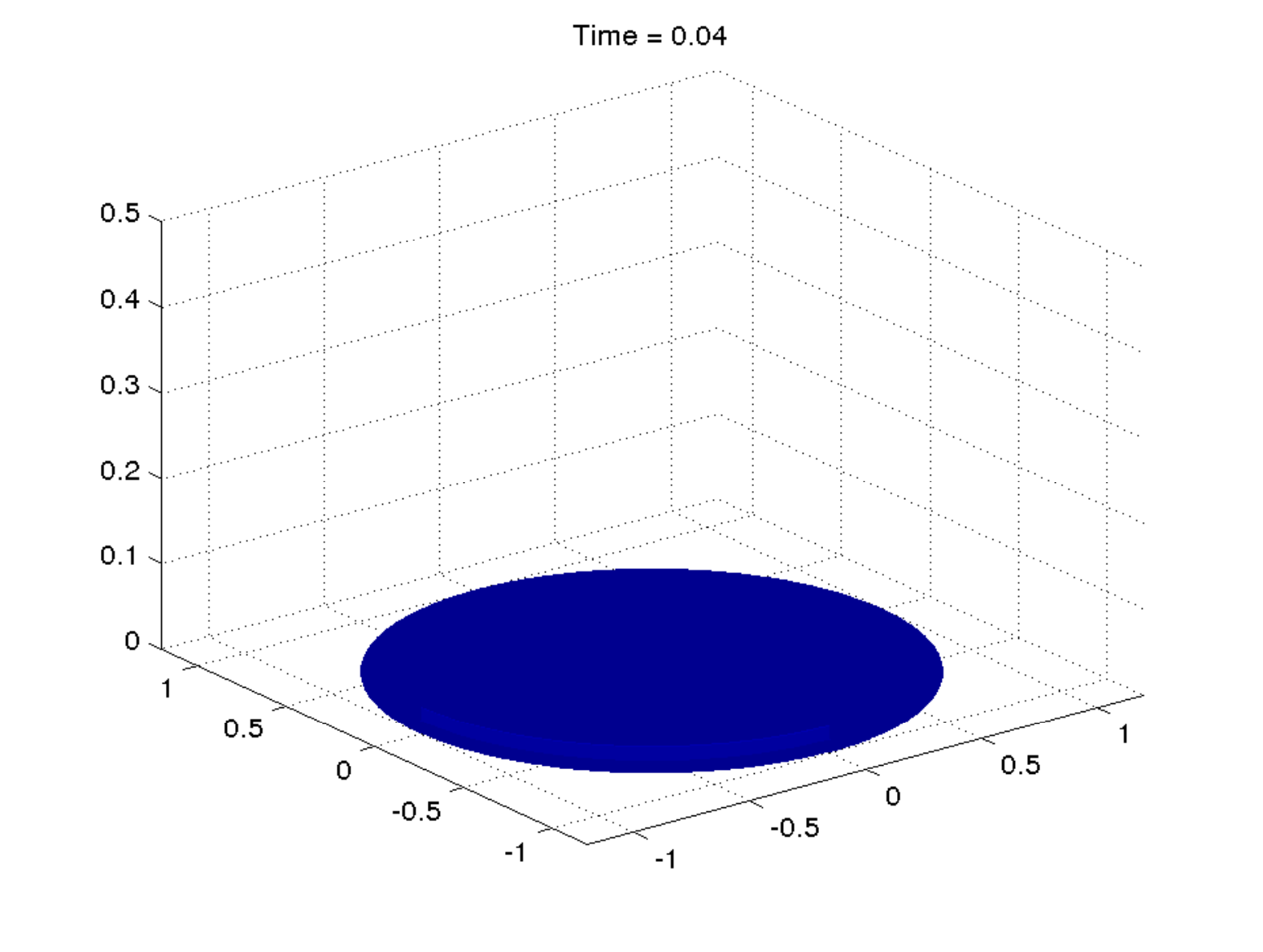}
           \caption{$\xi = 100$}
         \end{subfigure}
         \caption{Comparison of $p$ for $\xi=10,20,50,100$ at time $t = 0.04$ and for the generic parameters \eqref{reac}, \eqref{diff} and initial data \eqref{InitialData} on a unit-circular cell. The plots clearly confirms that $p$  decreases when $\xi$ increases.}
         \label{fig3a}
         \end{figure}
         
         \medskip
         
         In Figure \ref{fig3a}, we compare $p(t,x)$ on $\Gamma_2$ at an early time $t = 0.04$ for four different values of $\xi$ being $10, 20, 50$ and $100$.
         The numerical results show how a larger reaction rate $\xi$ leads 
          to a decay of $p\searrow 0$ on $\Gamma_2$. This happens already at the very small time $t=0.04$ and even more so for larger times (data not shown).
         Observing this fact suggests that  the system \eqref{f1}--\eqref{f3} for large $\xi$
         may be well approximated by a reduced quasi-steady-state approximation (QSSA) without $p$, which
         is formally obtained by letting $\xi\to+\infty$. This QSSA was  rigorously performed in Section \ref{sec:4}.
         
         \subsection{Initial-boundary layers in $P$ for large $\xi$}
         
         The following Figures \ref{InitialMass} and \ref{SteadyState} continue to numerically investigate the system behaviour for small and large $\xi$, i.e. for slow and fast 
         release of cortical $p$. Note that, in this part, we consider the case of no surface diffusion $d_l = d_p = 0$.

         \begin{figure}[htp]
         \centering
         \begin{subfigure}{.4\textwidth}
           \centering
           \includegraphics[width=1\linewidth]{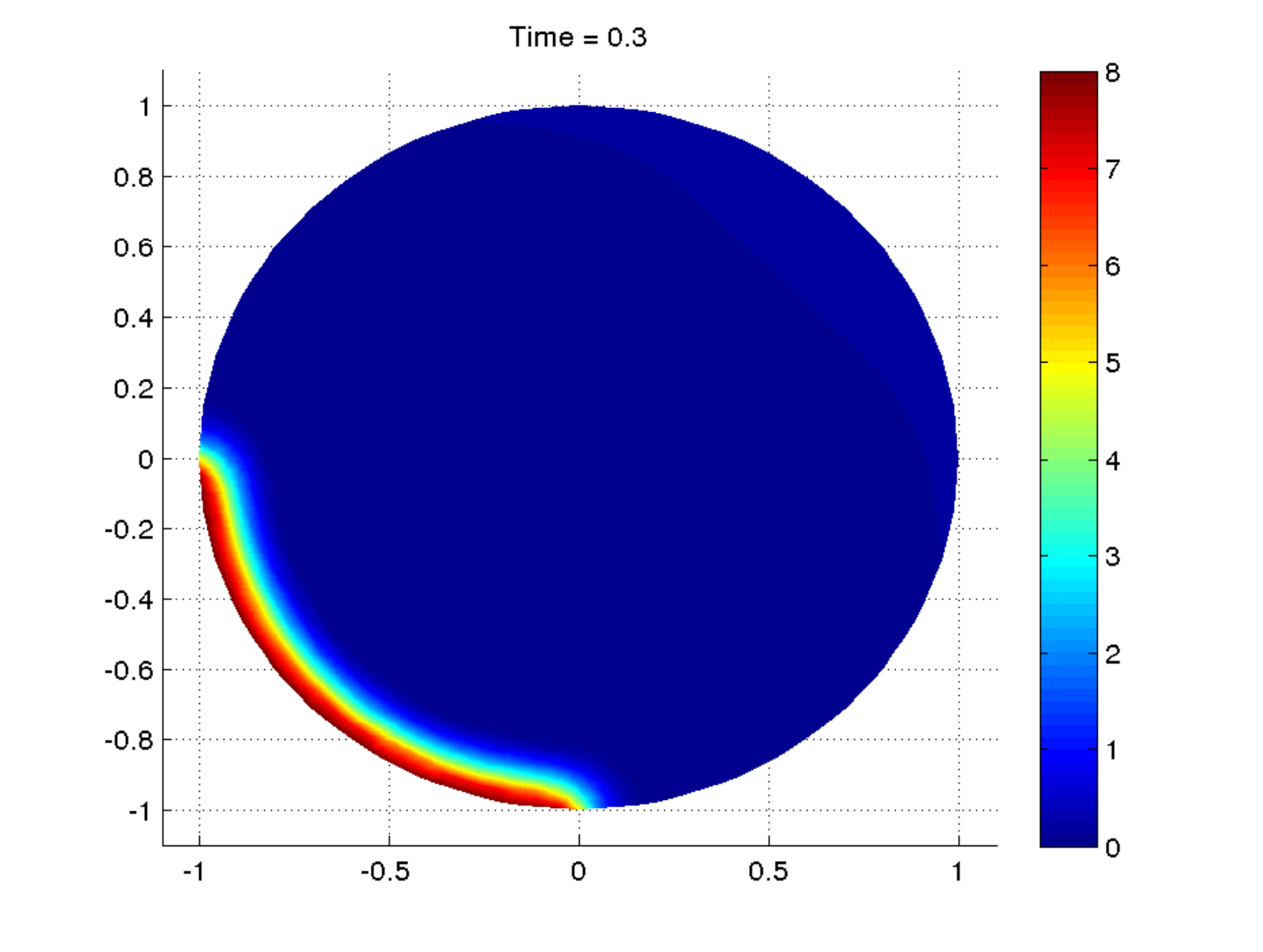}
           \caption{$\xi = 1000$}\label{FigPfast}
         \end{subfigure}%
         \begin{subfigure}{.4\textwidth}
           \centering
           \includegraphics[width=1\linewidth]{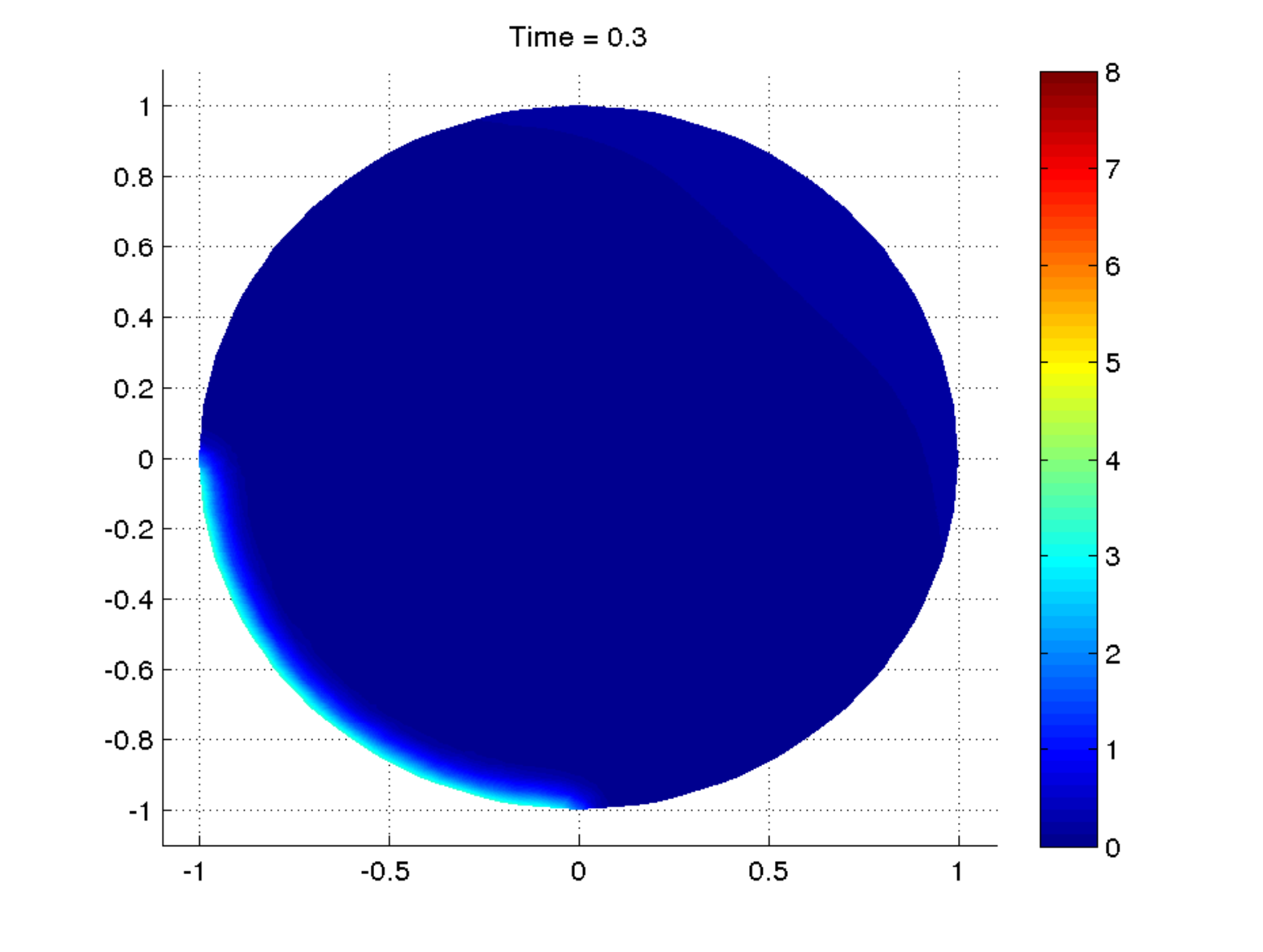}
           \caption{$\xi = 1$}\label{FigPslow}
         \end{subfigure}
         \caption{Initial-boundary layer in $P$ for $\xi=1000$ and $\xi=1$ at time $t = 0.3$ and for the generic parameters \eqref{reac}, \eqref{diff}, $d_l=d_p=0$ and initial data \eqref{InitialData} on a unit-circular cell. Fig. \ref{FigPfast} for large $\xi$ shows a strongly increased boundary layer near $\Gamma_2$ compared to Fig. \ref{FigPslow}.}
         \label{InitialMass}
         \end{figure}
         
         Figure \ref{InitialMass} compares the cytoplasmic concentration of 
         phosphorylated $P$-Lgl  for two values $\xi=1000$ and $\xi=1$ 
         at the smallish time $t=0.3$ and for the specified, constant initial data \eqref{InitialData}. 
         In particular, Figure \ref{FigPfast} illustrates that the fast reaction $p \xrightarrow{\xi} P$ 
         for $\xi=1000$ leads to much larger values of $P$ near the boundary $\Gamma_2$ as compare to $\xi = 1$. We thus observe the 
         formation of an initial-boundary layer near $\Gamma_2$ in Figure \ref{FigPfast}
         compared to Figure \ref{FigPslow}, which plots $P$ being formed by the slow reaction with $\xi=1$. 
         
         
         \begin{figure}[htp]
         \centering
         \begin{subfigure}{.4\textwidth}
           \centering
           \includegraphics[width=1\linewidth]{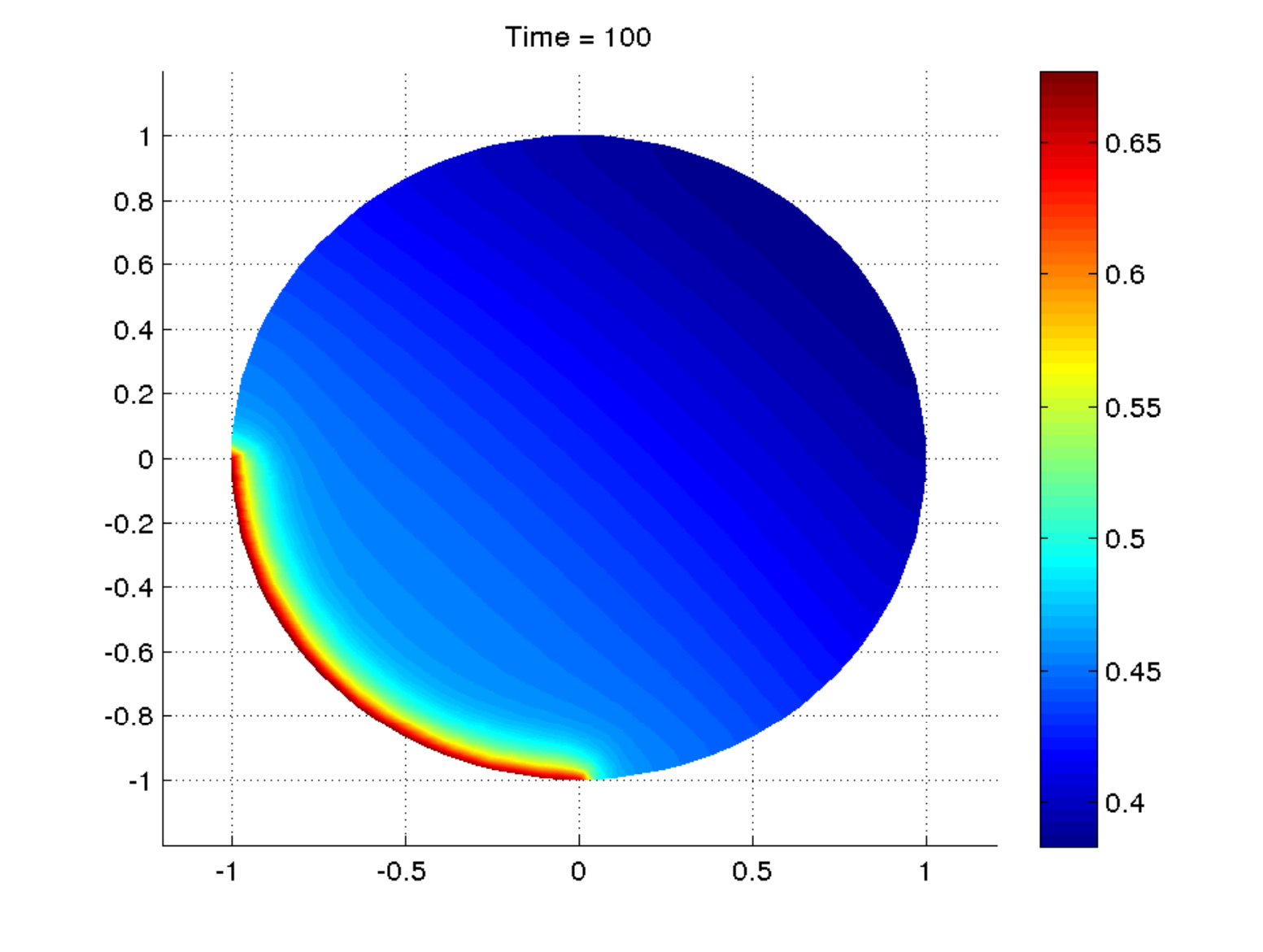}
           \caption{$\xi = 1000$}
         \end{subfigure}%
         \begin{subfigure}{.4\textwidth}
           \centering
           \includegraphics[width=1\linewidth]{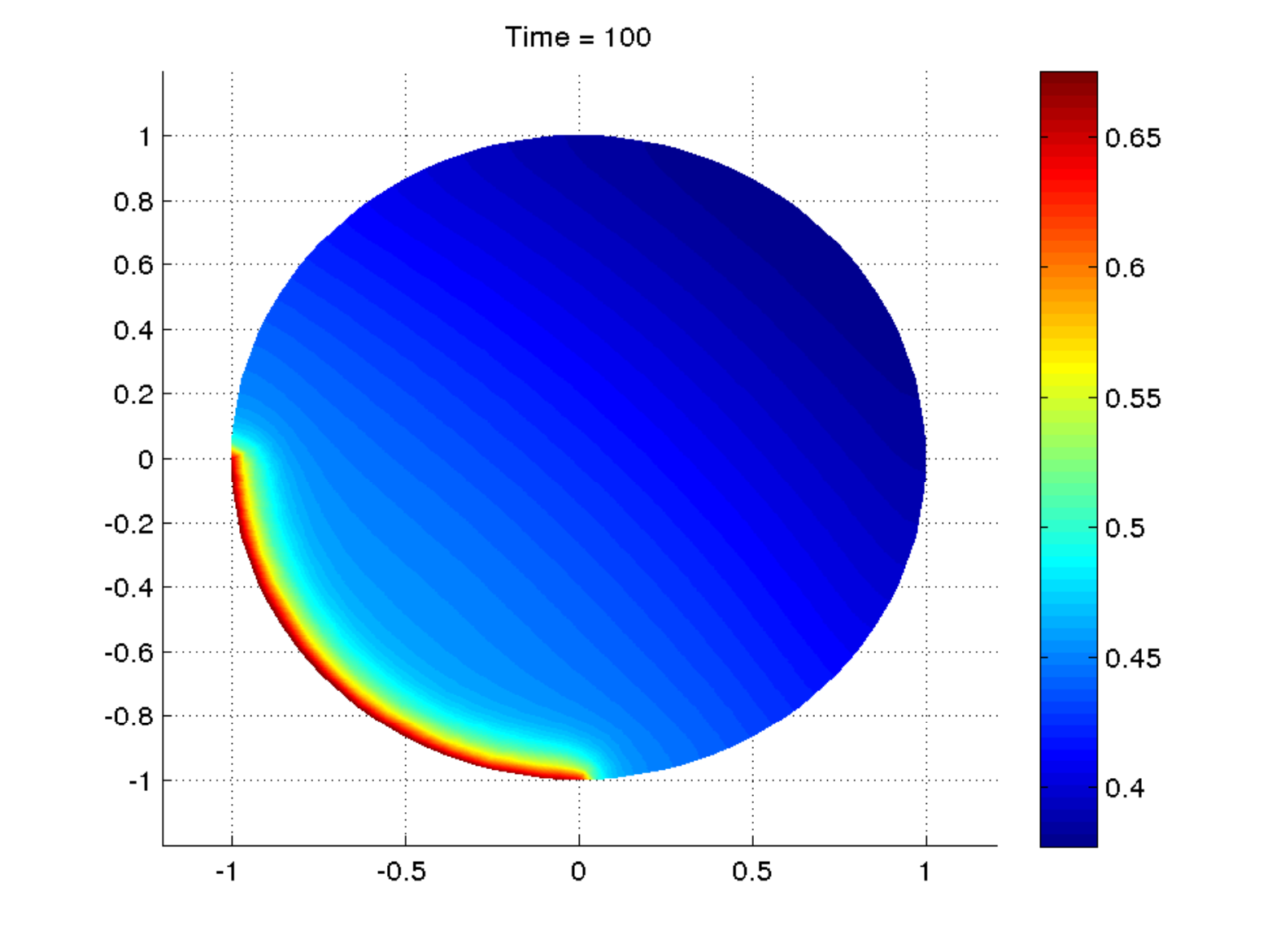}
           \caption{$\xi = 1$}
         \end{subfigure}
         \caption{Identical numerical stationary state concentrations of $P$ for $\xi=1000$ and $\xi=1$ at the time $t = 100$ and for the generic parameters \eqref{reac}, \eqref{diff}, $d_l=d_p=0$ and initial data \eqref{InitialData} on a unit-circular cell. The plot confirms that the magnitude of $\xi$ only influences the transient behaviour yet not the stationary state, see also Remark \ref{rem41}.}
         \label{SteadyState}
         \end{figure}
         
         \medskip
         
         Finally, Figure \ref{SteadyState} plots the numerical steady state 
         concentrations of $P$ for $\xi=1000$ and $\xi=1$ at the time $t = 100$. We observe that the stationary states appear to be identical and that
         the boundary layer in Figure \ref{InitialMass} is indeed an initial-boundary layer for large $\xi$ and no longer present in the stationary states, which features much lower values of $P$ near the boundary $\Gamma_2$. In fact, 
         we have demonstrated in Remark \ref{rem41} in Section \ref{sec:4}, that the 
         stationary states of system \eqref{f1}--\eqref{f3} without boundary diffusion terms
         \emph{do not depend on the rate $\xi$} and are unique 
         for fixed total initial mass in the mass conservation law \eqref{cons}. 
         Thus, Figure \ref{SteadyState} plots indeed that same stationary state. 
         
                 
\section{Conclusions}\label{sec:conc}
         
In this paper, we studied a VSRD system, which appears as a model for the asymmetric localisation of Lgl in Drosphila SOP precursor cell upon the activation of the kinase aPKC during mitosis. The challenges of the model system \eqref{f1}--\eqref{f3} lie in the volume-surface coupling as well as in the weakly-reversible reaction/sorption dynamics between the four considered conformations of the key protein Lgl. 

\medskip

The major aim of the paper is to provided the basic mathematical theory concerning such VSRD models: well-posedness, numerical discretisation and rigorous quasi-steady-state approximation with respect to a biologically large parameter. 
We expect that many of the results and methods of this paper can be carried over to similar VSRD models.
\smallskip

In performing the QSSA, mathematical difficulty arises from the volume-surface coupling, in particular from the singularity occurring in the Neumann boundary condition of $P$. In the case without surface diffusion terms, we were able to prove
in Section \ref{sec:4} the necessary a-priori estimates and  rigorously perform the QSSA to a reduced model system. For the full VSRD system  \eqref{f1}--\eqref{f3}, the weakly-reversible structure of \eqref{f1}--\eqref{f3} leads to technical difficulties in finding suitable a-priori estimates and thus prevents so far a rigorous proof of the QSSA.
\medskip

The VSRD model system \eqref{f1}--\eqref{f3} describes the asymmetric localisation of Lgl during mitosis, which subsequently leads to the asymmetric localisation of the cell-fate determinate Numb during the asymmetric stem cell division of Drosophila SOP precursor cells. 
Despite the lack of experimentally measured parameters, the numerical simulation of \eqref{f1}--\eqref{f3} allows nevertheless to study the qualitative behaviour of the model. 
\smallskip

Two such qualitative questions present themselves from the biological background: First, the role of surface diffusion and secondly, the fact the 
$p$-Lgl release rate $\xi$ is large. 
\smallskip

The first point is related to the experimental observation of 
a cortical gap between the Par complex, i.e. the active boundary part $\Gamma_2$ and the resulting localisation of the cell-fate determinants in neuroblasts cells, but not in SOP cells, see \cite{MEBWK}. It has been suggested by the experimentalists, that surface diffusion may play a role in 
creating this spectral gap. 

The numerical simulation performed in Fig. \ref{Figldiff} shows clearly that 
surface diffusion will spread out the $l$-Lgl profile. This can suggest an explanation for the cortical gap in neuroblast cells,
by assuming, for instance, that neuroblast cells need a higher $l$-Lgl concentration to localise Numb than SOP cells, where no spectral gap is observed. 
Moreover, larger surface diffusion rates would lead to further flattened $l$-Lgl profiles and, thus, an even larger cortical gap. 
\smallskip

However, our simulations also present two counter-indications: 
Firstly, even without surface diffusion, the model will always feature an indirect surface diffusion effect as a consequence of volume diffusion and reversible sorption of Lgl between cytoplasm and cortex, yet the magnitude of the indirect surface diffusion effect will always be significantly lower, see Section \ref{subsec:diff} and Fig. \ref{fig3}. Nevertheless, the indirect surface diffusion effect will increase if just one of the volume diffusion rates is increased, see Fig. \ref{figBigDiff} with tenfold increased $d_L$. 

Secondly, Fig. \ref{figLP} shows that surface diffusion 
together with the weakly-reversible structure of the model system  \eqref{f1}--\eqref{f3} can lead to an unexpected maximum of $L$-Lgl in the cytoplasm, 
which seems biologically unlikely. 
\smallskip

As a summary, one could conjecture that while surface diffusion may play an important role in the asymmetric localisation of Lgl and in the subsequent asymmetric stem cell division, the magnitude of the surface diffusion can not be seen independently from the parameters of the weakly-reversible Lgl kinetics and the volume diffusion rates. 
\medskip

Concerning the fast release of $p$-Lgl, Figs. \ref{fig3a} confirms the natural intuition that larger $\xi$ will lead to a faster release of cortical $p$-Lgl and the formation of an initial-boundary layer of $P$-Lgl, see Fig.  \ref{InitialMass}.
However, for the steady state of the system \eqref{f1}--\eqref{f3} and also for the reduced QSSA system \eqref{ff1}--\eqref{ff3}, Fig. \ref{SteadyState}
shows that the steady state of these two weakly-reversible systems is independent of  $\xi$ in the case with zero surface diffusion, see Remark \ref{rem41} (and also essentially independent of $\xi$ in cases with sufficiently small surface diffusion, data not shown). 
\smallskip

A biological interpretation of this observation has to recall that non-phosphorylated cortical Lgl, i.e. $l$-Lgl is active in the localisation of Pon and Numb. The concentration of $l$-Lgl, however, is not a direct results of the fast release of
$p$-Lgl, but the consequence of the dynamics of the entire weakly-reversible system \eqref{f1}--\eqref{f3}. 

For very large release rates $\xi$, it thus follows that the other reaction/sorption rates of   \eqref{f1}--\eqref{f3} will be rate limiting in converging to a steady state and the overall process of asymmetric Lgl localisation will no further be speeded up. 
Nevertheless, for sufficiently large  $\xi$, the QSSA obtained in Section \ref{sec:4} will certainly provide a good approximation, which will even predict the identical stationary state compared to \eqref{f1}--\eqref{f3}, see Remark \ref{rem41}.

\section{Appendix}\label{sec:app}
         
         \subsection{The proof of Lemma \ref{compactness}, cf. \cite{BP,PP}.}
         \begin{proof}
         The prove of the Lemma is based on a duality argument. We shall denote by 
         $$
         \mathfrak T^*: (\Phi_i)_{0\leq i\leq N}\in C_0^{\infty}(\Omega)\times (C^{\infty}_0(\Omega_T))^N \rightarrow (z(0), z, z|_{\partial\Omega})
         $$
         the adjoint operator $\mathfrak T^*$ of $\mathfrak T$, where $z$ is the solution of
         \begin{equation}\label{adjoint}
         \begin{cases}
         -z_t - d_P\Delta z = \Phi_0 - \sum_{i=1}^{N}\partial_{x_i}\Phi_i,\\ 
         d_P \partial z/\partial\nu = 0, \qquad \qquad          z(T) = 0.
         \end{cases}
         \end{equation}
         By integration by parts: for $\Phi = (\Phi_i)_{1\leq i\leq N}$
         \begin{align*}
                  \langle \mathfrak T^*(\Phi_0, \Phi), (w_0, \Theta, g)\rangle &=\langle (z(0), z, z|_{\Gamma}), (w_0, \Theta, g)\rangle\\
                  &= \int_{\Omega}z(0)w_0+\int_{\Omega_T}z(w_t - d_P\Delta w)  + \int_{\Gamma_T}zg\\
                  &= \int_{\Omega_T}-w(z_t + d_P\Delta z)
                  = \int_{\Omega_T}-w(-\Phi_0 + \nabla\cdot\Phi)\\
                  &= \int_{\Omega_T}(\Phi_0w + \Phi\nabla w)
                  = \langle (\Phi_0, \Phi), (w_0, \nabla w)\rangle\\
                  &= \langle (\Phi_0,\Phi), \mathfrak T(w_0, \Theta, g)\rangle.
                  \end{align*}
         
         The adjointness can be checked by integration by parts: For $\Phi = (\Phi_i)_{1\leq i\leq N}$
         \begin{align*}
         \langle \mathfrak T^*(\Phi_0, \Phi), (w_0, \Theta, g)\rangle &=\langle (z(0), z, z|_{\Gamma}), (w_0, \Theta, g)\rangle
         = \int_{\Omega}z(0)w_0 + \int_{\Omega_T}z\Theta + \int_{\Gamma_T}zg\\
         &= \int_{\Omega}z(0)w_0+\int_{\Omega_T}z(w_t - d_P\Delta w)  + \int_{\Gamma_T}zg\\
         &= -\int_{\Omega_T}wz_t + d_P\int_{\Omega_T}\nabla w\nabla z
         \end{align*}
         by using \eqref{dual} and after integration by parts. Further, with $\partial z/\partial\nu = 0$, we continue
         \begin{align*}
         \langle \mathfrak T^*(\Phi_0, \Phi), (w_0, \Theta, g)\rangle
         &= \int_{\Omega_T}-w(z_t + d_P\Delta z)
         = \int_{\Omega_T}-w(-\Phi_0 + \nabla\cdot\Phi)\\
         &= \int_{\Omega_T}(\Phi_0w + \Phi\nabla w)
         = \langle (\Phi_0, \Phi), (w_0, \nabla w)\rangle\\
         &= \langle (\Phi_0,\Phi), \mathfrak T(w_0, \Theta, g)\rangle.
         \end{align*}
         It is well-known (see e.g. \cite{Lady}) that for $p>N/2+1$, $q>N+2$ and $X = L^p(\Omega_T)\times (L^q(\Omega_T))^N$, the solution $z$ to \eqref{adjoint} satisfies for a small enough $\alpha >0$
         $$
         \|z\|_{C^{\alpha}(\Omega_T)} \leq \kappa\|(\Phi_0, \Phi)\|_X,
         $$
         where $\kappa$ does not depend on $\Phi_0, \Phi$. Thus, due to the dense embedding $C_0^{\infty}\times(C_0^{\infty}(\Omega_T))^{N}\hookrightarrow L^{p}(\Omega)\times(L^{q}(\Omega_T))^N$, we can uniquely extend  $\mathfrak T^*$ to a continuous operator from $X$ into $C^{\alpha}(\Omega)\times C^{\alpha}(\Omega_T)$ and consequently to a compact operator from $X$ into $L^{\infty}(\Omega)\times L^{\infty}(\Omega_T)\times L^{\infty}(\Gamma_T)$. It implies that $\mathfrak T$ can be defined as a compact operator from $L^1(\Omega)\times L^1(\Omega_T)\times L^1(\Gamma_T)$ into $X' = L^r(\Omega_T)\times (L^s(\Omega_T))^N$ for all $r<(N+2)/N$ and $s<(N+2)/(N+1)$. By taking $r = s = 1$, we can complete the proof.
         \end{proof}
         
\subsection{Numerical discretisation}     \label{appnum}    
         
The following first order finite element scheme for system \eqref{f1}--\eqref{f3}
has been used for the numerical simulations in Section \ref{sec:3}
         \subsubsection*{Time-discretisation}
         We apply a first order implicit Euler scheme as time discretisation, which is well known to be stable for such linear problems (see e.g. \cite{Johnson}). 
         More precisely, for a given time step $h$, we shall denote by $L_n(x) := L(nh, x)$ and $L_{n+1}(x) := L((n+1)h, x)$, respectively and analog for $P, l$ and $p$. Thus, we have  for 
         $n\ge0$ the following iteration of semi-discretised systems :
         \begin{equation}
         \begin{cases}
         -hd_L\Delta L_{n+1} + (1+h\beta)L_{n+1} - h\alpha P_{n+1} = L_n, &\qquad x\in\Omega,\\
         -hd_P\Delta P_{n+1} + (1+h\alpha)P_{n+1} -h\beta L_{n+1} = P_n, &\qquad x\in\Omega,\\
         -hd_l\Delta_{\Gamma}l_{n+1} + (1+h(\gamma + \sigma\chi_{\Gamma_2}))l_{n+1} - h\lambda L_{n+1} = l_n, &\qquad x\in\Gamma,\\
         -hd_p\Delta_{\Gamma_2}p_{n+1} + (1+h\xi)p_{n+1} - h\sigma l_{n+1} = p_n, &\qquad x\in\Gamma_2,
         \end{cases}
         \end{equation}
         with boundary condition
         \begin{equation}
         \begin{cases}
         d_L\partial L_{n+1}/\partial\nu = -\lambda L_{n+1} + \gamma l_{n+1}, &\qquad x\in\Gamma,\\
         d_P\partial P_{n+1}/\partial\nu = \xi\chi_{\Gamma_2}p_{n+1}, &\qquad x\in\Gamma,\\
         d_p\partial p_{n+1}/\partial\nu_{\Gamma_2} = 0, &\qquad x\in\partial\Gamma_2.
         \end{cases}
         \end{equation}
         \subsubsection*{Space-discretisation}
         We use a standard finite element method for space discretisation of the cell volume. More precisely, the domain $\Omega$ is approximated by a triangulation mesh $\mathcal T_\eta$ where $\eta$ is the maximum diameter of the triangles. We will use as basis functions the space of continuous, piecewise linear functions on triangles. Although finite element methods are well known for linear reaction-diffusion problems on bounded domains, we ought to remark the following three points specific to this work by addressing surface diffusion as well as the active and nonactive parts of the boundary:
         \begin{itemize}
         \item To deal with the equations on the boundary, $\Gamma$ will be approximated by a polygon $\partial\mathcal T_\eta$. Such a discretisation was already successfully applied for a linear elliptic system featuring mixed volume-surface diffusion in \cite{ER}, where also an error analysis was carried out.
         \item The triangulation is made such that the boundary $\partial\Gamma_2$, which are just two points $(-1,0)$ and $(0,-1)$ in the considered case $\Omega\subset\mathbb R^2$, coincides with the vertices of one or more triangles. Moreover, due to the discontinuity of $\chi_{\Gamma_2}$ we shall significantly refine the mesh in the proximity of these two points as can be seen in Figure \ref{fig:mesh}.
          We remark that, for the sake of clarity, the mesh given in Figure \ref{fig:mesh}, which is obtained after one mesh refinement, has approximately $4000$ elements. Later in this paper, to produce high resolution pictures, we will use a mesh created by five mesh refinements, which contains about $65000$ elements.
         
         \begin{figure}[htp]
         \centering
         \includegraphics[scale=.35]{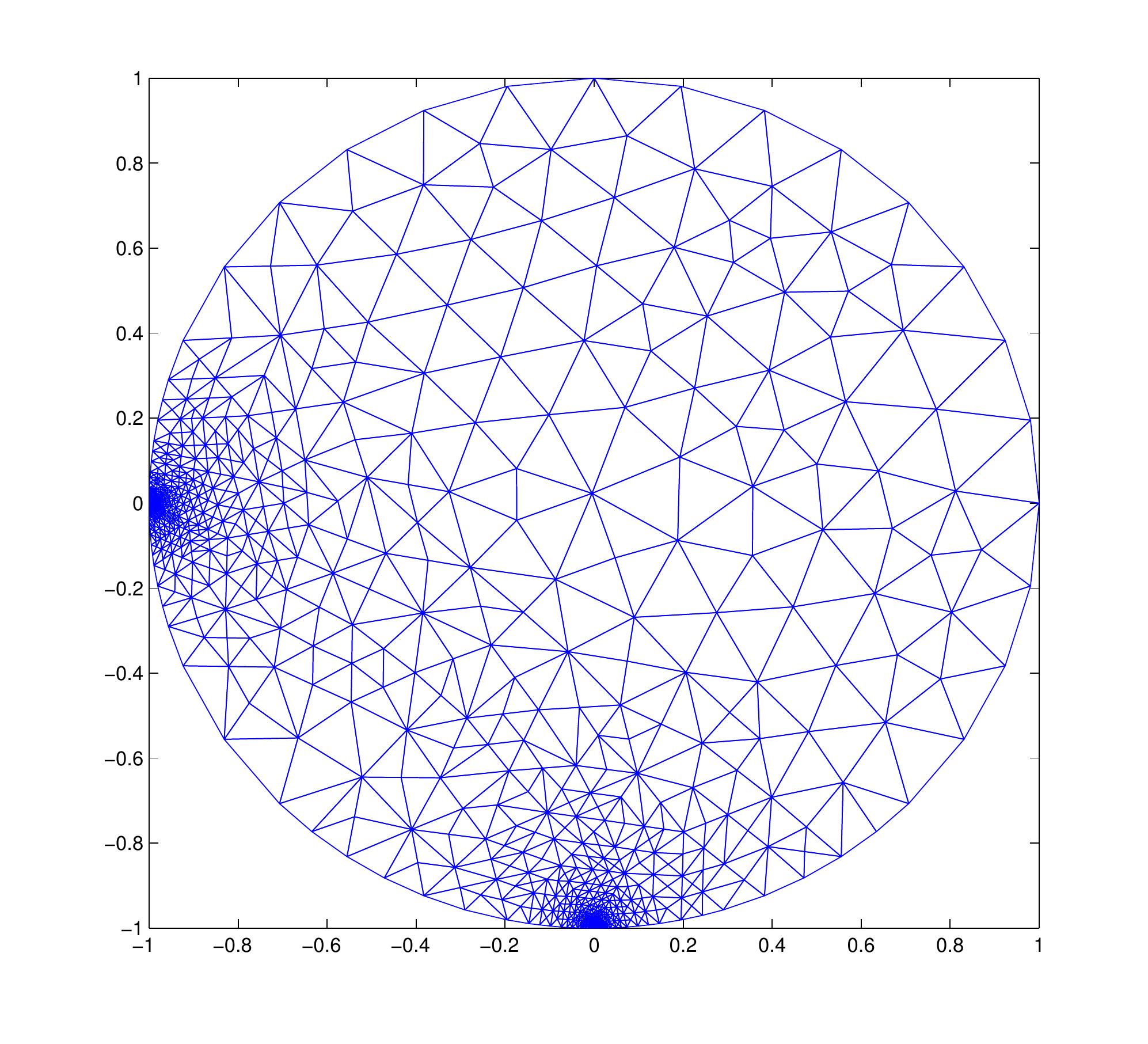}
         \caption{Triangulation mesh and refinement in the proximity of $\partial\Gamma_2$, i.e. the points $(-1,0)$ and $(0,-1)$, 
         which are discontinuity points of the system.}
         \label{fig:mesh}
         \end{figure}
         
         \item The Laplace-Beltrami operator $\Delta_{\Gamma}$ and $\Delta_{\Gamma_2}$ on the boundary, which represent the surface diffusion, $\Gamma$ can be approximated by the Laplace-Beltrami operator on $\partial\mathcal T_{\eta}$. By choosing a polygon as approximation of the boundary $\Gamma$, the operator $\Delta_{\Gamma}$ can itself be approximated by an operator $\Delta_{\partial\mathcal T_{\eta}}$, see e.g. \cite{ER}. Because $\partial\mathcal T_{\eta}$ is a union of disjoint segments, the operator $\Delta_{\partial\mathcal T_{\eta}}$ can be split to act on each segment separately. Moreover, since we use a weak/variational problem formulation, we only have to compute the tangential gradient of affine basis functions on the approximating segments and remark that in this case the tangential gradient coincides with the directional derivative. 
         
         Note that in the case of a circle or a sphere, we could alternatively use spherical coordinates to discretise the Laplace-Beltrami operator (see e.g. \cite{NGCRSS}). However, the above discretisation has the advantage to work for any sufficiently smooth domain $\Omega$, which can be well approximated by linear segments.
         \end{itemize}

         \vskip 1cm
         \noindent{\bf Acknowledgements}. We would like to thank the reviewers for their very helpful comments and suggestions which improve the paper.
         
         B.Q. Tang is supported by International Research Training Group IGDK 1754. K. Fellner gratefully acknowledges partial support by NAWI Graz. The authors would like to thank very much Herbert Egger for his support concerning the numerical discretisation.

\end{document}